\DeclareSymbolFont{sfoperators}{OT1}{ptm}{m}{n}
\DeclareSymbolFontAlphabet{\mathsf}{sfoperators}
\def\operator@font{\mathgroup\symsfoperators}
\numberwithin{equation}{section}
\newtheorem{thm}{Theorem}[section]
\newtheorem{lem}[thm]{Lemma}
\newtheorem{prop}[thm]{Proposition}
\newtheorem{assumption}[thm]{Assumption}
\theoremstyle{remark}
\newtheorem{rmk}[thm]{Remark}
\DeclareMathOperator{\id}{id}
\def\th@newremark{\th@remark\thm@headfont{\bfseries}}
\def\bdiamond{\mathop{\mathpalette\bdi@mond\relax}}
\newcommand\bdi@mond[2]{%
	\vcenter{\hbox{\m@th
			\scalebox{\ifx#1\displaystyle 2.6\else1.8\fi}{$#1\diamond$}%
	}}%
}
\def\bDiamond{\mathop{\mathpalette\bDi@mond\relax}}
\newcommand\bDi@mond[2]{%
	\vcenter{\hbox{\m@th
			\scalebox{\ifx#1\displaystyle 2.6\else1.2\fi}{$#1\Diamond$}%
	}}%
}
\definecolor{darkgreen}{rgb}{0.1,0.7,0.1}
\definecolor{darkred}{rgb}{0.7,0.1,0.1}
\definecolor{darkblue}{rgb}{0,0,0.7}
\newcommand{\EE}{\mathbb{E}}     
\newcommand{\NN}{\mathbb{N}}
\newcommand{\PP}{\mathbb{P}}     
\newcommand{\RR}{\mathbb{R}}      
\newcommand{\TT}{\mathbb{T}}
\newcommand{\ZZ}{\mathbb{Z}}      
\newcommand{\aA}{\mathcal{A}}
\newcommand{\bB}{\mathcal{B}}
\newcommand{\cC}{\mathcal{C}}
\newcommand{\dD}{\mathcal{D}}
\newcommand{\hH}{\mathcal{H}}
\newcommand{\iI}{\mathcal{I}}
\newcommand{\lL}{\mathcal{L}}
\newcommand{\mM}{\mathcal{M}}
\newcommand{\nN}{\mathcal{N}}
\newcommand{\oO}{\mathcal{O}}
\newcommand{\pP}{\mathcal{P}}
\newcommand{\sS}{\mathcal{S}}
\newcommand{\tT}{\mathcal{T}}
\newcommand{\uU}{\mathcal{U}}
\newcommand{\vV}{\mathcal{V}}
\newcommand{\wW}{\mathcal{W}}
\newcommand{\xX}{\mathcal{X}}
\newcommand{\yY}{\mathcal{Y}}
\newcommand{\zZ}{\mathcal{Z}}
\newcommand{\ex}{{\operator@font ex}}
\newcommand{\cov}{{\operator@font cov}}
\newcommand{\var}{{\operator@font var}}
\newcommand{\corr}{{\operator@font corr}}
\newcommand{\diam}{{\operator@font diam}}
\newcommand{\Av}{{\operator@font Av}}
\newcommand{\trig}{{\operator@font trig}}
\newcommand{\Enh}{{\operator@font Enh}}
\newcommand{\EEnh}{\overline {\operator@font Enh}}
\newcommand{\Laplace}{\Delta}
\newcommand{\sgn}{\text{sgn}}
\newcommand{\one}{\mathbf{1}}
\renewcommand{\r}{\mathbf{r}}
\newcommand{\emS}{\ensuremath{\mathscr{S}}}
\newcommand{\emR}{\ensuremath{\mathscr{R}}}
\newcommand{\emL}{\ensuremath{\mathscr{L}}}
\newcommand{\emP}{\ensuremath{\mathscr{P}}}
\newcommand{\eps}{\varepsilon}
\newcommand{\md}{\mathrm{d}}
\renewcommand{\d}{\partial}
\newcommand{\dist}{\operatorname{dist}}
\newcommand{\spann}{\operatorname{span}}
\newcommand{\bracket}[1]{\langle #1 \rangle}
\title{Interface fluctuations for $1$D stochastic Allen-Cahn equation revisited}
\author{Weijun Xu$^1$ \quad Wenhao Zhao$^2$ \quad Shuhan Zhou$^3$}
\institute{Beijing International Center for Mathematical Research, Peking University, China. \email{weijunxu@bicmr.pku.edu.cn}
\and EPFL, Switzerland. \email{wenhao.zhao@epfl.ch}
\and Peking University, China. \email{zhoushuhan@stu.pku.edu.cn
}
}
\begin{document}
\maketitle

\begin{abstract}
    We revisit the interface fluctuation problem for the $1$D Allen-Cahn equation perturbed by a small space-time white noise. We show that if the initial data is a standing wave solution to the deterministic equation, then under proper long time scale, the solution is still close to the family of traveling wave solutions. Furthermore, the motion of the interface converges to an explicit stochastic differential equation. This extends the classical result in \cite{Fun95} to full small noise regime, and recovers the result in \cite{BBDMP98}. 

    The proof builds on the analytic framework in \cite{Fun95}. Our main novelty is the construction of a series of functional correctors that are designed to recursively cancel potential divergences. Moreover, to show these correctors are well-behaved, we develop a systematic decomposition of Fr\'echet derivatives of the deterministic Allen-Cahn flow of all orders. This decomposition is of its own interest, and may be useful in other situations as well. 
\end{abstract}

\setcounter{tocdepth}{2}
\tableofcontents

\section{Introduction}

In this article, we study the long-time interface behavior of the following $1$D stochastic Allen-Cahn equation:
\begin{equation}\label{e:main_eqn}
    \partial_t u_\eps =  \Delta u_\eps +  f(u_\eps) + \eps^{\gamma} a_\eps \dot{W}\;.
\end{equation}
Here, $\gamma>0$ is a fixed parameter, $a_\eps = a(\sqrt{\eps} \cdot)$ for some smooth function $a$ with compact support in $(-1,1)$, $\dot{W}$ is a one-dimensional space-time white noise (time derivative of the $\lL^2$ cylindrical Wiener process $W$), and $f$ satisfies the following assumption.

\begin{assumption} \label{as:f}
    The function $f: \RR \rightarrow \RR$ is smooth with all derivatives bounded (including $f$ itself), and satisfies the followings:
    \begin{enumerate}
        \item $f$ has exactly three zeros: $\pm 1$ and $0$, and $f'(\pm 1) < 0$, $f'(0) > 0$. 
        \item $f$ is odd, that is, $-f(u) = f(-u)$. 
    \end{enumerate}
\end{assumption}

The assumption that $f$ has bounded derivatives is mainly for technical simplicity. It can be relaxed to requiring that $f$ has polynomial growth and its first derivative is bounded from above (see \cite[Theorem~2.1]{Fun95} for details). The choice of the cutoff on the interval of length $\oO(1/\sqrt{\eps})$ is to be consistent with the set up in \cite{Fun95}. Indeed, by a change of parameter, one can make the cutoff to be $a \big( \eps^\theta \cdot \big)$ for every $\theta > 0$. 

The Allen-Cahn equation is a popular model to study dynamical phase separation. The deterministic part of \eqref{e:main_eqn} has been well understood. Let $m:\RR\rightarrow[-1,1]$ be the unique increasing solution of the steady-state problem
\begin{equation}
\label{e:m}
    \Delta m + f(m) = 0\;, \quad m(\pm \infty) = \pm 1\;, \quad m(0)=0\;.
\end{equation}
For $\theta \in \RR$, write $m_\theta := m(\cdot - \theta)$. The solution space for \eqref{e:m} without the restriction $m(0) = 0$ is the one dimensional manifold
\begin{equation*}
    \mM:=\{m_\theta = m(\cdot - \theta)\,: \; \; \theta \in \RR\}\;.
\end{equation*}
If there is no noise in the equation, solutions starting close enough to $\mM$ will converge to $\mM$ as $t \rightarrow +\infty$. In particular, solutions starting from $\mM$ do not evolve with time and stay there forever. \cite{CP89, FH89} studied the deterministic part of \eqref{e:main_eqn} on the interval $[-\frac{1}{\sqrt{\eps}}, \frac{1}{\sqrt{\eps}}]$ with Neumann boundary condition, where the only stationary solutions are constant functions $\pm 1$ on that interval. They showed that if one starts with an initial condition with one interface and very close to the restriction of $m_\theta$ on the interval $[-\frac{1}{\sqrt{\eps}}, \frac{1}{\sqrt{\eps}}]$ for some $\theta$, then the interface location moves very slowly at the speed $e^{-c/\sqrt{\eps}}$ for some $c>0$.

The situation is different with the effect of noise. Even though one starts with $u_\eps[0] \in \mM$ for \eqref{e:main_eqn}, the noise will push it away from $\mM$. One expects that at proper long time (polynomial in $\eps$ this time), the solution is still close to $\mM$, but the location of the interface moves according to an approximate diffusion process. 

To investigate the right time scale to observe the above behavior, we note that due to the factor $\eps^{\gamma}$, it takes $\eps^{-2\gamma}$ time for the effect of noise to accumulate to size $\oO(1)$. \cite{BDMP95} showed that there exists a process $\theta_t^\eps$ converging to Brownian motion as $\eps \rightarrow 0$ such that $u[\eps^{-2\gamma} t]$ is close to $m_{\theta_t^\eps}$. On the other hand, since noise exists on the interval $[-\frac{1}{\sqrt{\eps}}, \frac{1}{\sqrt{\eps}}]$, it is possible to look at longer time scale for the interface to move a distance up to $\oO(1/\sqrt{\eps})$. This was achieved in \cite{Fun95}, where the author showed that there exists a process $\theta_t^\eps$ converging in law to a limiting diffusion such that $u[\eps^{-2\gamma-1} t]$ is close to $m_{\theta_t^\eps / \sqrt{\eps}}$. There was a restriction $\gamma>5$ in \cite{Fun95} due to some potentially divergent terms arising from the derivation of the SDE, and one assumes the noise to be sufficiently small to balance them out. 

Later, \cite{BBDMP98} also obtained essentially the same result at time scale $\eps^{-2\gamma-1}$ with $\gamma>0$, without the restriction that $\gamma$ being large. To reach this time scale, the authors developed a coupling technique that allows one to compare the laws of two processes up modulo translation, and hence to control the potentially non-summable errors. These probabilistic techniques are very different from the analytic methods in \cite{Fun95}. 

In this article, we show that the methods in \cite{Fun95} can be refined to also cover the full range $\gamma>0$ under the time scale $\eps^{-2\gamma-1}$. The main novelty is to introduce a series of functional correctors to cancel potential divergences. We first state the main theorem.

\begin{thm} \label{thm:main}
Fix $\gamma>0$ and $\kappa \in (0,\gamma)$ arbitrary. Let $\xi_0\in \RR$, and $u_\eps$ be the solution to \eqref{e:main_eqn} with initial data $u_\eps[0] = m_{\xi_0 / \sqrt{\eps}} \in \mM$. Let $v_{\eps}(t,x) := u_\eps(\eps^{-2\gamma-1}t,x)$. Then there exists a process $\xi^{\eps}_t\in \cC(\RR^+,\RR)$ satisfying both of the following: 
\begin{enumerate}
\item For every $T>0$ and $N>0$, we have
    \begin{equation} \label{e:closeness_process}
    \PP\Big(\sup_{t \in [0,T]} \|v_{\eps}[t] - m_{\xi^{\eps}_t/\sqrt{\eps}}\|_{\lL^{\infty}} > \eps^{\gamma-\kappa}\Big) \lesssim \eps^N
    \end{equation}
    for all $\eps \in (0,1)$. 

\item For every $T>0$, $\xi^{\eps}_t$ converges weakly on $\cC([0,T],\RR)$ to a limiting process $\xi$, which satisfies the It\^o SDE
    \begin{equation} \label{e:limit_sde}
        \md \xi_t = \alpha_1 a(\xi_t)\,\md B_t + \alpha_2 a(\xi_t)a'(\xi_t) \,\md t
    \end{equation}
    with initial data $\xi_0$. Here $B_t$ is the standard Brownian motion, and $\alpha_1$, $\alpha_2$ are constants given in \eqref{e:alpha}. 
\end{enumerate}
\end{thm}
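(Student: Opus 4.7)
The plan is to follow the Funaki projection paradigm, but with a new ingredient --- \emph{functional correctors} --- that pushes the admissible range from the high-$\gamma$ window of \cite{Fun95} down to the full regime $\gamma > -\frac{1}{4}$. First, on a small $\lL^\infty$-neighborhood of $\mM$, define the base projection $\theta^{(0)}$ by the orthogonality relation
\begin{equation*}
\bigl\langle v - m_{\theta^{(0)}(v)},\, m'_{\theta^{(0)}(v)}\bigr\rangle = 0,
\end{equation*}
so that $\sqrt\eps\,\theta^{(0)}(v_\eps[t])$ is a natural first candidate for the center $\xi^\eps_t$. By the implicit function theorem $\theta^{(0)}$ is smooth, and its functional derivatives admit closed-form expressions in terms of $m'$ and the Fredholm inverse of the linearised operator $\Delta + f'(m)$ on $(m')^\perp$.

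I next apply Itô's formula to $\theta^{(0)}(v_\eps[t])$ using the time-rescaled equation
\begin{equation*}
\partial_t v_\eps = \eps^{-2\gamma-\frac{3}{2}}\bigl(\Delta v_\eps + f(v_\eps)\bigr) + \eps^{-\frac{1}{2}} a_\eps\,\dot{\widetilde W},
\end{equation*}
with $\widetilde W$ a rescaled space-time white noise. Because $\theta^{(0)}$ is adapted to $\mM$, the huge deterministic drift is essentially tangential along $\mM$ and collapses to a term of order one; the martingale part produces $\alpha_1 a(\xi)\,\md B_t$ at leading order, with $\alpha_1$ an explicit integral against $m'$. The obstruction is the second-order Itô correction: it picks up the noise covariance factor $\eps^{-1}$, and so --- before cancellation --- yields a drift of order $\eps^{-1/2}$ once we multiply by the $\sqrt\eps$ needed to pass from $\theta$ to $\xi$. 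For $\gamma$ very large such a divergence could be absorbed into higher-order errors, but in the full range $\gamma > -\frac{1}{4}$ it must be cancelled.

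To cancel it I replace $\theta^{(0)}$ by a dressed functional
\begin{equation*}
\Theta^\eps(v) := \theta^{(0)}(v) + \sum_{k=1}^{K(\gamma)} \eps^{\beta_k}\, \theta^{(k)}(v),
\end{equation*}
with $K(\gamma)$ finite and exponents $0 < \beta_1 < \beta_2 < \cdots$ chosen so that each $\theta^{(k)}$ exactly cancels the leading divergence produced by Itô's formula applied to $\theta^{(k-1)}$, and then set $\xi^\eps_t := \sqrt\eps\,\Theta^\eps(v_\eps[t])$. Schematically, $\theta^{(k)}$ solves $\lL_m\,\theta^{(k)} = R^{(k-1)}$, where $\lL_m$ is the (functional) linearised Allen-Cahn operator and $R^{(k-1)}$ is the remainder from the previous step. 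Solvability on $(m')^\perp$ follows from the spectral gap at the instanton, and the orthogonality needed at each step is secured by the oddness of $f$ together with the algebraic structure of the Itô remainder. The correctors and all their functional derivatives must furthermore be bounded uniformly in $\eps$; this is exactly the role of the systematic decomposition of functional derivatives of the deterministic Allen-Cahn flow developed elsewhere in the paper, which expresses each derivative as a controlled combination of translation modes and semigroup convolutions that can be estimated in $\lL^\infty$.

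With $\Theta^\eps$ in hand, a final Itô computation gives
\begin{equation*}
\md \xi^\eps_t = \alpha_1 a(\xi^\eps_t)\,\md B_t + \alpha_2 a(\xi^\eps_t)\, a'(\xi^\eps_t)\,\md t + o_\eps(1),
\end{equation*}
\emph{provided} the fluctuation $w_\eps := v_\eps - m_{\xi^\eps_t/\sqrt\eps}$ is controlled simultaneously. For this I introduce the stopping time $\tau_\eps := \inf\{t:\|w_\eps[t]\|_{\lL^\infty} > \eps^{\gamma+\frac{1}{4}-\kappa}\}$ and show, via the spectral gap of $\Delta + f'(m)$ on $(m')^\perp$ applied to the equation satisfied by $w_\eps$, that $\PP(\tau_\eps < T) \to 0$, yielding \eqref{e:closeness_process}. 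The SDE-with-error then produces tightness of $\xi^\eps$ in $\cC([0,T];\RR)$ via Kolmogorov, and the limit is identified as the unique weak solution of \eqref{e:limit_sde} through the associated martingale problem. \emph{The main obstacle}, and where essentially all of the technical work lies, is the construction and uniform estimation of the correctors $\theta^{(k)}$: one has to verify both that the recursion terminates in finitely many steps for each $\gamma > -\frac{1}{4}$, and that the successive remainders $R^{(k-1)}$ automatically lie in the range of $\lL_m$ --- it is precisely here that the functional-derivative decomposition becomes indispensable.
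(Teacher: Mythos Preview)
Your overall architecture --- functional correctors that recursively cancel Itô-drift divergences, a stopping time localising to a small $\lL^\infty$-neighborhood of $\mM$, and a final martingale-problem identification --- matches the paper. But your choice of base functional creates a real problem. You take $\theta^{(0)}$ to be the \emph{linear center} (the paper's $\eta$, defined by $\langle v-m_\theta,m'_\theta\rangle=0$), and assert that the deterministic drift $\eps^{-2\gamma-1}\langle D\theta^{(0)}(v_\eps),\Delta v_\eps+f(v_\eps)\rangle$ ``collapses to a term of order one''. This is false: the pairing $\langle D\eta(v),\Delta v+f(v)\rangle$ vanishes only on $\mM$, and for $v_\eps$ at distance $\eps^{\gamma'-\kappa}$ from $\mM$ it is of that same order, giving a drift of size $\eps^{-\gamma-3/4-\kappa}$, which diverges throughout the range $\gamma>-\frac14$. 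The paper instead uses the flow-invariant functional $\zeta(v)$ (the limit of $F^t(v)$), for which $\langle D\zeta(v),\Delta v+f(v)\rangle=0$ holds \emph{identically} on $\vV_\beta$ (their \eqref{e:magical_cancellation}); this exact cancellation is what makes the deterministic drift disappear before any corrector is introduced, leaving only the Itô term to be dealt with.

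The second gap is in your corrector equation ``$\lL_m\,\theta^{(k)}=R^{(k-1)}$''. You invoke solvability on $(m')^\perp$ via the spectral gap of $\Delta+f'(m)$, but the correctors are \emph{functionals} on $\vV_\beta$, not functions of $x$, so the spatial spectral gap does not directly apply. The paper makes this precise: the relevant operator is $\Psi\mapsto\langle D\Psi(v),\Delta v+f(v)\rangle$, and its inverse is given by integrating along the deterministic flow, $\bar\Psi(v)=\int_0^\infty\Psi(F^t(v))\,\md t$ (see \eqref{e:corrector_formula_1} and \eqref{e:psi_bar_k}). Well-posedness of this integral is nontrivial and is exactly where the exponential convergence of $F^t$ and the systematic decomposition of $D^kF^t$ enter (their Section~\ref{sec:deterministic}). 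Your sketch conflates the functional and spatial levels; once you replace $\theta^{(0)}$ by $\zeta$ and write the correctors as flow integrals, your outline aligns with the paper's proof.
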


An intermediate step towards proving Theorem~\ref{thm:main} is to show $u_\eps[t]$ being close to the stable manifold $\mM$. We single it out in the following statement.

\begin{thm} \label{thm:Linftycloseness}
Fix $\kappa' \in (0,\gamma)$. There exist $\alpha^* \in (0, \frac{1}{5})$ and $p^*\in[1,+\infty)$ with $\alpha^* p^* > 1$ such that for every $N, N'>0$, we have
\begin{equation*}
    \PP \; \Big( \sup \limits_{t \in [0, \eps^{-N}]} \dist_{\wW^{\alpha^*,p^*}}(u_\eps[t], \mM) > \eps^{\gamma - \kappa'} \Big) \lesssim \eps^{N'}
\end{equation*}
for all $\eps \in (0,1)$. Here, $\dist_{\wW^{\alpha^*, p^*}}(v,\mM)$ denotes the $\wW^{\alpha^*, p^*}$-distance of $v$ from $\mM$ as defined in \eqref{e:W_distance}. 
\end{thm}

\begin{rmk}
    The reason we use $\lL^\infty$-distance in Theorem~\ref{thm:main} and $\wW^{\alpha^*, p^*}$-distance in Theorem~\ref{thm:Linftycloseness} is as follows. The $\lL^\infty$-setting is convenient for most of the analysis regarding the deterministic Allen-Cahn flow, in particular the induction in Theorem~\ref{thm:derivativesOfPsik} below. On the other hand, it is a non-separable space and hence not suitable for applying It\^o's formula. Hence, we use $\wW^{\alpha^*, p^*}$ for the application of It\^o's formula, and then turn back to $\lL^\infty$ for all the subsequent analysis. The validity of this switch in our context is guaranteed by Lemma~\ref{lem:restriction_derivative}, the embedding $\wW^{\alpha^*, p^*} \hookrightarrow \lL^\infty$ if $\alpha^* p^* > 1$, and the relevant $\lL^\infty$-based bounds established in Section~\ref{sec:corrector_construction}. We refer to Section~\ref{sec:proof_of_main} for details. 
\end{rmk}

\begin{rmk}
Theorem~\ref{thm:main} can be reformulated as a sharp interface limit problem as follows. Let
\begin{equation*}
    \bar{u}_\eps (t,x) := u_\eps (t/\eps, x/\sqrt{\eps})\;.
\end{equation*}
Then $\bar{u}_\eps$ satisfies (in law) the equation
\begin{equation*}
    \d_t \bar{u}_\eps = \Delta \bar{u}_\eps + \frac{1}{\eps} f\big(\bar{u}_\eps \big) + \eps^{\gamma-\frac{1}{4}} \, a \, \dot{W}\;, \qquad \bar{u}_\eps [0] = u_\eps(0, \, \cdot/\sqrt{\eps})\;.
\end{equation*}
This is a sharp interface limit formulation with a stochastic perturbation. Let $\sgn_{\xi}$ be the sign function with separation point $\xi$ in the sense that $\sgn_\xi (x) = 1$ for $x \geq \xi$ and $-1$ for $x < \xi$. 

Also, for the same process $\xi_t^\eps$ in Theorem~\ref{thm:main}, one can show \eqref{e:closeness_process} remains true if one replaces $\lL^\infty$-norm by $\lL^p$-norm for sufficiently large $p$ ($p \geq p^*$ in Theorem~\ref{thm:Linftycloseness} suffices). Then by definition of $\bar{u}_\eps$ and scaling properties, we have
\begin{equation*}
    \big\| \bar{u}_\eps [\eps^{-2\gamma} t] - \sgn_{\xi^\eps_t} \big\|_{\lL^p} \leq \eps^{\frac{1}{2p}} \Big( \big\| v_\eps[t] - m_{\xi^\eps_t / \sqrt{\eps}} \big\|_{\lL^p} + \|m_{\xi^\eps_t} - \sgn_{\xi^\eps_t} \|_{\lL^p} \Big) \lesssim \eps^\theta
\end{equation*}
for some $\theta>0$ with high probability. This says the solution $\bar{u}_\eps$ converges to the sign function $\sgn_{\xi^\eps_t}$ with the separation point $\xi^\eps_t$ evolving according to the SDE \eqref{e:limit_sde} in the limit.
\end{rmk}

We now briefly discuss our strategy to prove Theorem~\ref{thm:main}. Since $\wW^{\alpha^*, p^*} \hookrightarrow \lL^\infty$ if $\alpha^* p^* > 1$, by choosing $\kappa'<\kappa$, Theorem~\ref{thm:Linftycloseness} immediately implies the existence of a process $\xi_t^\eps$ such that \eqref{e:closeness_process} holds. However, proving convergence of the interface process to the limiting diffusion is much more challenging. In order to emphasize our main new inputs, we first assume Theorem~\ref{thm:Linftycloseness}, and discuss how to prove the convergence of the interface process to the SDE \eqref{e:limit_sde}. 

As mentioned above, the main challenge at the time scale $\eps^{-2\gamma-1}$ is that there are potentially divergent terms with negative powers of $\eps$ in the derivation of the limiting equation. They arise from the quadratic variation process when applying It\^o's formula. Our main idea to deal with them is to introduce a series of functional correctors, which are constructed to cancel out the potential divergences inductively. The idea is similar in spirit to homogenization, but with significant practical differences. The construction of the correctors here involves solving infinite-dimensional PDEs (for unknown $\Psi: \bB \rightarrow \RR$) of the type
\begin{equ}
    \langle D\Psi(v), \nN(v) \rangle = \hH(v)\;,
\end{equ}
where $\hH:\bB \rightarrow \RR$ is a functional on Banach space $\bB$, and $\nN: \dD(\nN)\rightarrow \bB$ is an (unbounded and nonlinear) operator on $\bB$ with certain nice properties. This is carried out in Section~\ref{sec:convergence}. Similar ideas have been used in \cite{Hai09, Energy_dissipation_slow} to construct Lyapunov function for finite-dimensional Hamiltonian systems with noise, and also in \cite{Lorenz_noise} to ``correct" the choice of Lyapunov function for a stochastically perturbed Lorenz system. One difference in our infinite-dimensional situation is that showing the well-posedness of the``correctors" is much more sophisticated. This is the main content of Sections~\ref{sec:deterministic} and~\ref{sec:proof_of_Theorems}, where we systematically analyze the Fr\'echet derivatives of all orders of the deterministic solution flow. We hope the techniques developed here may play a role in the study of the long-time behavior of other stochastic PDEs (especially singular ones).

\bigskip

We now discuss related works that might point to interesting future questions. 

\medskip

\textbf{Non-odd $f$.} Our analysis crucially relies on $f$ being odd. In particular, the case $n=0$ in Assertion 4 in Theorem~\ref{thm:derivativesOfPsik} relies on $f$ being odd. For non-odd $f$ but still satisfying the balanced condition
\begin{equation*}
    \int_{-1}^{1} f(u) \, \md u = 0\;,
\end{equation*}
the solution $m$ to \eqref{e:m} and its translations still form a stable manifold. But in general one does not expect to be able to reach time scale $\eps^{-2\gamma-1}$. Under the time scaling $\eps^{-2\gamma}$, \cite[Section~5.2]{MR1654352} and \cite{non-symmetric} showed that $u[\eps^{-2\gamma}t]$ is close to $m_{\theta_t^\eps}$, where $\theta_t^\eps$ now converges to $c_1 B_t + c_2 t$ for some $c_1, c_2 \in \RR$. In particular, $c_2=0$ if $f$ is odd. 

\cite[Section~5.1]{MR1654352} also showed that one can further enlarge the time scale by $\eps^{-\frac{1}{2}}$, and $u[\eps^{-2\gamma-\frac{1}{2}} t]$ is close $m_{\theta_t^\eps / \sqrt{\eps}}$, where $\theta_t^\eps$ converges to the solution $\theta_t$ of the deterministic ODE
\begin{equation*}
    \md \theta_t = c_2 a^2 (\theta_t) \, \md t\;.
\end{equation*}
If $c_2 = 0$ (which is the case for odd $f$), one can look at a further time scale by $\eps^{-\frac{1}{2}}$, obtaining the SDE in \cite{Fun95} and in Theorem~\ref{thm:main}. 

\medskip

\textbf{Invariant measures.} \cite{Web10} and \cite{OWW14} studied the invariant measure associated to the dynamics with Dirichlet boundary condition on the interval $[-L_\eps, L_\eps]$: 
\begin{equation} \label{e:invariant}
    \d_t u_\eps = \Delta u_\eps + f(u_\eps) + \eps^\gamma \dot{W}\;, \qquad u_\eps(t, \pm L_\eps) = \pm 1\;.
\end{equation}
The solution $u_\eps$ is naturally extended to a function on $\RR$ by setting its value to be $1$ for $x > L_\eps$, and $-1$ for $x < -L_\eps$. The precise parameters and set up in \cite{Web10} and \cite{OWW14} are different from \eqref{e:invariant}, but can all be translated into that form. 

\cite{Web10} considered the case $L_\eps = \frac{1}{\sqrt{\eps}}$, and showed that for $\gamma>\frac{3}{8}$, the invariant measure for \eqref{e:invariant} (extended to functions on $\RR$ as above) are exponentially concentrated near $\mM$. \cite{OWW14} further explored the situation with an interval of exponential size. More precisely, they showed that there exists $c_*>0$ such that as long as $1 \ll L_\eps \lesssim e^{c \eps^{-2\gamma}}$ with $c< c_*$, then the measure exhibits only one interface with high probability. If in addition $L_\eps \gg |\log \eps|$, then the location of the interface is approximately uniformly distributed. These phenomena are consistent with the dynamics at our polynomial time scale. Hence, it is reasonable to expect a similar phenomenon for the dynamics on intervals of exponential scale $\exp(c\eps^{-2 \gamma})$ with small $c$, and we hope the techniques developed in this paper can be useful towards proving corresponding results. However, it is expected that the behavior will be different if $c$ is large, which would be a very interesting and challenging question. 

\medskip

\textbf{Others.} \cite{Wes21} and \cite{Web14} studied initial data with several interfaces for the deterministic and stochastic Allen-Cahn equations. Such initial data is also studied in \cite{ABK12} for the stochastic Cahn-Hilliard equation. The results in the stochastic case were proved for sufficiently small noise (corresponding to large $\gamma$). 

Finally, we would like to mention the work \cite{limit_cycle} on the general situation where the deterministic part has a limit cycle instead of just one stationary solution as the limit. The setting in \cite{limit_cycle} are for finite-dimensional ODEs. It will be interesting to investigate the (S)PDE situation.

\subsection*{Notations}

For every non-negative integer $k$, every $\alpha \in (k, k+1)$ and $p \in[1,+\infty)$, let $\cC^\alpha(\RR)$ and $\wW^{\alpha,p}(\RR)$ be the standard H\"older and Sobolev spaces with norms
\begin{align*}
    \|g\|_{\cC^\alpha} &:= \sum_{j=0}^{k} \|g^{(j)}\|_{\lL^\infty} + \sup_{x \neq y} \frac{|g^{(k)}(x)-g^{(k)}(y)|}{|x-y|^{\alpha-k}}\;, \\
    \|g\|_{\wW^{\alpha,p}} &:= \sum_{j=0}^k\|g^{(j)}\|_{\lL^p} + \int_{\RR^2} \frac{|g^{(k)}(x) - g^{(k)}(y)|^p}{|x-y|^{1+(\alpha-k) p}} \, \md x \,\md y\;.
\end{align*}
Let $\cC_b^\infty(\RR)$ be the space of smooth functions with bounded derivatives of all orders. Recall $a \in \cC_c^\infty (\RR)$ is a fixed cutoff function with compact support in $(-1,1)$. We write $a_\eps(x) = a(\sqrt{\eps} x)$. For $\vec{y} = (y_1, \dots, y_n)$, we write $|\vec{y}| = \sum_{j} |y_j|$. For every $\lambda \in \RR$ and $p\in[1,+\infty]$, we define the weighted $\lL^p$-norm by 
\begin{equation}\label{e:Llambda}
    \|g\|_{\lL^p_{\lambda}(\RR^n)} := \Vert e^{\lambda |\cdot|}g\Vert_{\lL^p(\RR^n)}=\left(\int_{\RR^n} e^{p\lambda |\vec{y}|} |g(\vec{y})|^p \,\md \vec{y}\right)^{\frac{1}{p}}\;,
\end{equation}
where $p=+\infty$ corresponds to the supremum norm. Define the operators $\{\emS_\theta\}_{\theta \in \RR}$ and $\emR$ by
\begin{equation*}
    (\emS_\theta v)(z) := v(z-\theta)\;, \qquad (\emR v)(z):= - v(-z)\;.
\end{equation*}
Throughout, $m$ denotes the unique increasing solution to the stationary equation \eqref{e:m} subject to $\pm 1$ boundary conditions and centering condition $m(0)=0$. Let
\begin{equation*}
    \mM = \{\emS_\theta m: \theta \in \RR\}
\end{equation*}
be the manifold of translations of $m$. We also write $m_\theta = \emS_\theta m$, and in particular $m_0 = m$. Let
\begin{equation*}
    \dist (v, \mM) := \inf_{\theta} \|v - m_\theta\|_{\lL^\infty}
\end{equation*}
be the $\lL^\infty$-distance of $v$ from $\mM$. For $r > 0$, we write 
\begin{equation*}
    \vV_{r} = \{ v: \dist(v, \mM) < r \}\;.
\end{equation*}
We also write 
\begin{equation}\label{e:W_distance}
    \dist_{\wW^{\alpha,p}}(v,\mM):=\inf_{\theta} \|v - m_\theta\|_{\wW^{\alpha,p}}
\end{equation}
be the $\wW^{\alpha,p}$-distance of $v$ from $\mM$. 
Let $(F^t)_{t\geq 0}$ denote the deterministic Allen-Cahn flow in the sense that $F^t(v)$ satisfies the equation
\begin{equation} \label{e:deterministic_flow}
    \d_t F^t(v) = \Delta F^t(v) + f \big( F^t(v) \big)\;, \qquad F^0(v) = v\;.
\end{equation}
Let $\beta>0$ be such that Propositions~\ref{prop:fermi} and~\ref{pr:LinftyEC} hold. It in particular implies the existence of a functional $\zeta: \vV_\beta \rightarrow \RR$ such that $F^t(v) \rightarrow m_{\zeta(v)}$ exponentially fast in $\lL^\infty$ uniformly in $v \in \vV_\beta$. This functional $\zeta$ will play an essential role throughout the article. For $K\geq 0$, we write
\begin{equation*}
    \vV_{r,K} = \big\{ v \in \vV_r: |\zeta(v)|\leq K \big\}\;.
\end{equation*}
In particular, we have
\begin{equation*}
    \vV_{r,0} = \big\{ v \in \vV_r: \zeta(v)=0 \big\}\;.
\end{equation*}
Another special role is played by the generator of the linearized operator of $F^t$ at $v=m$, which we denote by (with a flipped sign)
\begin{equation} \label{e:op_A}
    \aA = -\Delta - f'(m)\;.
\end{equation}
Let $\pP$ denote the $\lL^2$ projection onto the one dimensional space spanned by $m'$ in the sense that
\begin{equation} \label{e:projection}
    \pP v := \frac{\bracket{v, m'}}{\|m'\|_{\lL^2}^2} \, m'\;,
\end{equation}
and $\pP^\perp := \id - \pP$. By the exponential decay of $m'$ (Lemma~\ref{lem:statationary_exponential_decay}), $\pP$ extends to all locally integrable functions with a small exponential growth. 

For $n \in \NN$ and Banach spaces $\xX$ and $\yY$, let $\emL^n(\xX^n,\yY)$ be the class of all bounded multi-linear maps from $\xX^n$ to $\yY$ equipped with the norm
\begin{equation*}
    \|\Phi\|_{\emL^n(\xX^n,\yY)} :=\sup \Big\{ \|\Phi(g_1,\dots,g_n)\|_\yY : \; \|g_j\|_\xX \leq 1 \text{ for all } 1\leq j \leq n \Big\}\;.
\end{equation*}
We also use bracket notation to write
\begin{equation*}
     \left\langle \Phi, \; (g_1, \dots, g_n) \right\rangle = \Phi (g_1, \dots, g_n)\;.
\end{equation*}
Most of the times, we use $\xX = \lL^\infty(\RR)$ and $\yY = \RR$. In this situation, we say $\Phi \in \emL^n \big( \xX^n, \yY \big)$ has a kernel if there exists $T \in \lL^1 \big( \RR^n,\RR \big)$ such that
\begin{equation} \label{e:kernel_defn}
    \langle \Phi, \; (g_1, \cdots, g_n) \rangle = \int_{\RR^n} T(\vec{y}) \, \prod_{j=1}^{n} g_j (y_j) \, \md \vec{y}\;.
\end{equation}
Finally, we use $c$, $C$ to denote generic constants whose values may change from line to line. We also use the notation $A \lesssim B$ to denote $A \leq C \cdot B$ for some constant $C$ independent of the parameter $\eps$ (and other parameters that are normally clear from the context).

\subsection*{Organization of the article}

The rest of the article is organized as follows. In Section~\ref{sec:convergence}, we give a construction of the above-mentioned functional correctors, and state their two key properties -- well-posedness (Theorems~\ref{thm:derivativesOfPsicork} and~\ref{thm:zeta_continuity_bound}) and cancellation effects (Theorem~\ref{thm:functional_equation}). These are the main novel ingredients of this article. We then prove Theorem~\ref{thm:main} by first assuming these properties as well as Theorem~\ref{thm:Linftycloseness}. In Section~\ref{sec:deterministic}, we develop a systematic decomposition of Fr\'echet derivatives of the deterministic flow of all orders, and use it to prove Theorems~\ref{thm:derivativesOfPsicork}, ~\ref{thm:zeta_continuity_bound} and~\ref{thm:functional_equation} in Section~\ref{sec:proof_of_Theorems}. In Section~\ref{sec:closeness}, we turn back to the proof of Theorem~\ref{thm:Linftycloseness}, which completes the proof of the main result. In the appendices, we collect some useful results used in the main text.

\subsection*{Acknowledgement}

We are very grateful to Tadahisa Funaki for his encouragement during various stages of this work, and for suggesting that homogenization ideas can be useful for proving the convergence in Section~\ref{sec:convergence} as well as for helpful feedback on the first draft of this manuscript. We also thank Martin Hairer for an interesting discussion on the threshold value $\gamma=0$, Yuning Liu and Jiajun Tong for various helpful discussions and for their patience in listening to us, and Paulo Butt\`a for telling us the reference \cite{BCS20}, which contains an elegant proof of Proposition~\ref{prop:Linfty_decay}. 

We are also extremely indebted to two referees for their careful reading of the manuscript and providing sharp and helpful comments. In particular, one referee pointed out some gaps in our previous treatment of Fr\'echet derivatives, as well as giving suggestions on the formulation and presentation of the results. The process of filling the gap as well as modifying the formulation of the results have greatly improved our understanding of the problem and the presentation of the article. The presentation on the scaling heuristic on Page 3 was suggested by the referee. 

W. Xu was supported by the National Science Foundation China via the standard project grant (no. 8200906145) and the Ministry of Science and Technology via the National Key R\&D Program of China (no. 2020YFA0712900). Part of the work was done when the first author was visiting NYU Shanghai in Autumn 2023. We thank its Institute of Mathematical Sciences for hospitality.

\section{Convergence}
\label{sec:convergence}

\subsection{Overview of the proof}
\label{sec:convergence_overview}

To prove Theorem~\ref{thm:main}, we first identify a candidate for the ``interface" of $v_{\eps}[t] = u_\eps[\eps^{-2\gamma-1}t]$. For this purpose, we need the concept of the limiting functional $\zeta$, whose existence and basic properties are stated in Proposition~\ref{pr:LinftyEC} later. Also recall from \eqref{e:deterministic_flow} that $(F^t)_{t \geq 0}$ is the solution flow of the deterministic equation. 

Assuming Theorem~\ref{thm:Linftycloseness} and Proposition~\ref{pr:LinftyEC}, a natural candidate for the rescaled interface process is
\begin{equation*}
    \xi_t^{\eps,0} := \sqrt{\eps} \zeta \big( v_\eps[t] \big)\;,
\end{equation*}
which is the one considered in \cite{Fun95}. Theorem~\ref{thm:Linftycloseness} and Proposition~\ref{pr:LinftyEC} guarantee the well-posedness of $\xi_t^{\eps,0}$, and imply that it satisfies \eqref{e:closeness_process}. The challenge is to prove its convergence to the SDE \eqref{e:limit_sde}. Note that $v_\eps$ satisfies (in law)
\begin{equation} \label{e:v_eps_eqn}
    \d_t v_\eps = \eps^{-2\gamma-1} \big( \Delta v_\eps + f(v_\eps) \big) + \eps^{-\frac{1}{2}} a_\eps \dot{W}\;, \qquad v_\eps[0] = m_{\xi_0/\sqrt{\eps}}\;.
\end{equation}
By It\^o's formula, one formally sees that $\xi_t^{\eps,0}$ satisfies the SDE
\begin{equation} \label{e:approximate_SDE_funaki}
    \frac{\md}{\md t} \xi_t^{\eps,0} = \bracket{D\zeta(v_{\eps}[t]), a_\eps \dot{W}_t} + \frac{1}{2} b_t^{\eps,0}\;,
\end{equation}
where
\begin{equation*}
    b_t^{\eps,0} = \frac{1}{\sqrt{\eps}} \int_\RR a_\eps^2(y) \, D^2\zeta(y,y;v_\eps[t]) \,\md y\;. 
\end{equation*}
Here, $D \zeta$ and $D^2 \zeta$ denote Fr\'echet derivatives of $\zeta$. The above expression is formal at this stage. In particular, the precise differential structure of the Fr\'echet derivatives needs to be specified, and we have also formally treated $D^2 \zeta (v_\eps[t])$ as a continuous function by evaluating it on the diagonal. These will be addressed in Section~\ref{sec:corrector_construction}. The negative power $\eps^{-2\gamma-1}$ in \eqref{e:v_eps_eqn} does not appear in the drift $b^{\eps,0}_t$ due to the magical cancellation from the identity\footnote{Roughly speaking, the identity \eqref{e:magical_cancellation} is due to the fact that the observable $\zeta$ is invariant under the flow $F^t$ induced by operator $\nN(v) = \Laplace v + f(v)$, and hence $D \zeta$ should be orthogonal to it. See \cite[Theorem~7.4]{Fun95} or Theorem~\ref{thm:functional_equation} below for more details.}
\begin{equ}
\label{e:magical_cancellation}
    \bracket{D \zeta(v), \Delta v + f(v)} = 0\;.
\end{equ}
It is shown in \cite[Lemma~8.2]{Fun95} that the diffusion term in \eqref{e:approximate_SDE_funaki} converges to a proper stochastic integral with respect to Brownian motion. As for the drift $b_t^{\eps,0}$, Taylor expanding $a^2$ near $\xi_t^{\eps,0}$, one gets
\begin{equation} \label{e:natural_drift}
    \begin{split}
    b_t^{\eps,0} = &\eps^{-\frac{1}{2}} a^2 ( \xi_t^{\eps,0} ) \int_{\RR} D^2\zeta (y,y;v_\eps[t]) \,\md y\\
    &+ (a^2)'(\xi_t^{\eps,0}) \int_{\RR} \big( y- \zeta(v_\eps[t]) \big) \; D^2\zeta (y,y;v_\eps[t]) \,\md y + \text{error}_\eps\;.
    \end{split}
\end{equation}
One can also follow the same procedure as in \cite[Lemma~8.1]{Fun95} to show that the second term on the right-hand side above converges to a finite drift term, and $\text{error}_\eps$ vanishes as $\eps \rightarrow 0$. The first term causes trouble due to the $\eps^{-\frac{1}{2}}$ factor. Note that the integral of $D^2 \zeta (y,y;v)$ over $y \in \RR$ is $0$ for $v \in \mM$ due to its anti-symmetry in the $y$ variable (\cite[Corollary~7.1]{Fun95}). If one replaces $v \in \mM$ by $v_\eps[t]$ which is close to $\mM$, one hopes it gives certain smallness to cancel the $\eps^{-\frac{1}{2}}$ factor in front of it. Such smallness is quantified in \cite[Theorem~7.3]{Fun95} in terms of $\dist(v_\eps[t], \mM)$, and this is the main reason for the restriction on $\gamma$ being large in \cite{Fun95}. However, for small $\gamma$, there is no reason to expect $\dist(v_\eps[t], \mM)$ to be small enough to cancel out the $\eps^{-\frac{1}{2}}$ factor, and hence it is unclear whether the first term on the right-hand side of \eqref{e:natural_drift} still vanishes as $\eps \rightarrow 0$.

The idea is to introduce a correction term $\psi^{\eps,1}_{cor}$ which, is itself small on one hand, and when combined with the $\eps^{-2\gamma-1}$ factor in the equation for $v_\eps$, cancels out the potentially divergent term in $b_t^{\eps,0}$ above. To see what $\psi^{\eps,1}_{cor}$ should be, we apply It\^o's formula to $\xi^{\eps,1}_t := \xi_t^{\eps,0} + \sqrt{\eps} \psi^{\eps,1}_{cor}(v_\eps[t])$ to get
\begin{equation} \label{e:xi_formal_eq}
    \begin{split}
    \frac{\md}{\md t} \xi^{\eps,1}_t =& \frac{\md}{\md t} \xi_t^{\eps,0} + \eps^{-2\gamma-\frac{1}{2}}\bracket{D\psi^{\eps,1}_{cor}(v_{\eps}), \Laplace v_{\eps} + f(v_{\eps})}\\
    &+\bracket{D\psi^{\eps,1}_{cor}(v_{\eps}), a_\eps \, \Dot{W}_t} + \frac{1}{2\sqrt{\eps}} \int_\RR a_\eps^2(y) \, D^2 \psi^{\eps,1}_{cor}(y,y;v_{\eps}) \,\md y\;,
    \end{split}
\end{equation}
where we write $v_\eps$ for $v_\eps[t]$ for simplicity. Again, the above expression is still formal, in particular including the evaluation of $D^2 \psi_{cor}^{\eps,1}$ on the diagonal. We will give rigorous statements in Theorem~\ref{thm:derivativesOfPsicork} below. 

Back to \eqref{e:xi_formal_eq}, we seek $\psi^{\eps,1}_{cor}$ so that the potentially bad term in $\frac{\md}{\md t} \xi_t^{\eps,0}$ (first term on the right-hand side of \eqref{e:natural_drift}) is canceled out by the second term on the right-hand side of \eqref{e:xi_formal_eq}. This gives the equation
\begin{equ}
\label{e:corrector_equation}
    \eps^{-2\gamma-\frac{1}{2}}\bracket{D\psi^{\eps,1}_{cor}(v_{\eps}), \Laplace v_{\eps} + f(v_{\eps})}+\frac{a^2_\eps\big(\zeta(v_{\eps})\big)}{2 \sqrt{\eps}}  \int_\RR D^2 \zeta(y,y;v_{\eps})\,\md y = 0\;.
\end{equ}
It turns out that
\begin{equ} \label{e:corrector_formula_1}
    \psi^{\eps,1}_{cor}(v) := \frac{1}{2}\eps^{2\gamma} a^2_\eps\big(\zeta(v)\big) \int_0^{+\infty}\int_{\RR} D^2\zeta \big(y,y;F^t(v)\big)\,\md y \,\md t
\end{equ}
satisfies \eqref{e:corrector_equation}, where we recall from \eqref{e:deterministic_flow} that $(F^t)_{t \geq 0}$ is the solution flow of the deterministic equation. One can see heuristically that the expression \eqref{e:corrector_formula_1} satisfies \eqref{e:corrector_equation} by formally differentiating $\left.\frac{\md}{\md \delta}\right|_{\delta=0}\psi^{\eps,1}_{cor}\big(F^{\delta}(v)\big)$, and noting that $F^t \circ F^\delta = F^{t+\delta}$. 

Assume for the moment that the formula \eqref{e:corrector_formula_1} for $\psi_{cor}^{\eps,1}$ indeed satisfies the functional PDE \eqref{e:corrector_equation}. Then combining \eqref{e:approximate_SDE_funaki} and \eqref{e:natural_drift}, we see \eqref{e:xi_formal_eq} is reduced to
\begin{equation} \label{e:xi_first_order}
    \begin{split}
    \frac{\md}{\md t} \xi_t^{\eps,1} = &\bracket{D \zeta \big( v_\eps[t] \big) + D \psi_{cor}^{\eps,1} \big(v_\eps[t]\big), \, a_\eps \dot{W}_t}\\
    &+ (a^2)' (\xi_t^{\eps,0} ) \int_{\RR} \big( y - \zeta ( v_\eps[t]) \big) \, D^2 \zeta (y,y;v_\eps[t]) \,\md y\\
    &+ \frac{1}{\sqrt{\eps}} \int_{\RR} a_\eps^2(y) \, D^2 \psi_{cor}^{\eps,1}(y,y;v_\eps[t]) \,\md y + \text{error}_\eps\;.
    \end{split}
\end{equation}
The formula \eqref{e:corrector_formula_1} suggests that $D \psi_{cor}^{\eps,1}$ and $D^2 \psi_{cor}^{\eps,1}$ have order $\eps^{2\gamma}$ (which we will prove in a much more general situation later). Hence, one expects the first two terms on the right-hand side above to converge to nontrivial limits as $\eps \rightarrow 0$, and the third term to be of order $\eps^{2\gamma - \frac{1}{2}}$. It will vanish as $\eps \rightarrow 0$ if $\gamma>\frac{1}{4}$. This can indeed be justified and will give a rigorous convergence theorem for all $\gamma>\frac{1}{4}$.

If $\gamma\leq \frac{1}{4}$, the third term on the right-hand side of \eqref{e:xi_first_order} is still potentially divergent. But the potential divergence is now only $\eps^{2\gamma-\frac{1}{2}}$, which has an $\eps^{2\gamma}$ factor improvement than without using the corrector $\psi_{cor}^{\eps,1}$. This suggests that one may introduce a higher-order corrector beyond \eqref{e:corrector_formula_1} to cancel out this potentially divergent term. This will cover the range $\gamma \in (\frac{1}{8}, \frac{1}{4}]$. For even smaller $\gamma$, one can proceed to introduce recursively higher-order correctors, each canceling out the potentially divergent term from the previous one, and creating another potentially divergent term which is $\eps^{2 \gamma}$ better than the previous one. Since $\gamma>0$ is fixed, the procedure necessarily ends with finitely many steps. 

The above discussions give outline of the main strategy of the proof to Theorem~\ref{thm:main}. The rest of this section is organized as follows. In Section~\ref{sec:corrector_construction}, we introduce higher-order correctors $\psi_{cor}^{\eps,\ell}$ and state some of their properties. In Section~\ref{sec:proof_of_main}, we define a modified candidate process $\xi^\eps$ based on these correctors and a regularized and stopped version of $v_\eps$. We then prove the convergence of $\xi^\eps$ to the SDE \eqref{e:limit_sde} by assuming the properties of correctors stated in Section~\ref{sec:corrector_construction}. The verification of these assumed properties is postponed to later sections.

\subsection{Construction of the functional correctors}
\label{sec:corrector_construction}

We first introduce a localized and regularized version of the space-time white noise $\dot{W}$. Let $L_\eps = \frac{2}{\sqrt{\eps}}$. For $k \in \ZZ$, define the $L_\eps$-periodic functions $e_k^\eps$ by
\begin{equation*}
    e_k^\eps (x) := \frac{1}{\sqrt{L_\eps}} e^{2 \pi i \frac{k}{L_\eps} \cdot x}\;, \qquad x \in \RR\;.
\end{equation*}
The family $\{e_k^\eps\}_{k \in \ZZ}$ is an orthonormal basis of $L^2 (\RR / L_\eps \ZZ)$. Let $\{B_k\}_{k \in \ZZ}$ be complex Brownian motions with
\begin{equation*}
    \EE B_k(t) = \EE \big( B_k(t) \big)^2 = 0\;, \quad  \EE |B_k(t)|^2 = t\;, \quad B_{-k}(t) = \overline{B_k(t)}\;,
\end{equation*}
and $B_{k}$ independent of $B_{\ell}$ if $k \neq -\ell$. Define
\begin{equation*}
    W_\eps[t] := \sum_{k \in \ZZ} B_k(t) e_k^\eps\;.
\end{equation*}
Then, $\dot{W}_\eps$ is a space-time white noise on the torus of side length $L_\eps$, and $a_\eps \dot{W} \stackrel{\text{law}}{=}  a_\eps \dot{W}_\eps$. Hence, $v_\eps$ satisfies the equation \eqref{e:v_eps_eqn} (in law) with $\dot{W}$ replaced by $\dot{W}_\eps$. Since only the law of $v_\eps$ is concerned, from now on, we \textit{define} $v_\eps$ as the solution to the equation
\begin{equation} \label{e:v_eps_localized}
    \d_t v_\eps = \eps^{-2\gamma-1} \big( \Delta v_\eps + f(v_\eps) \big) + \eps^{-\frac{1}{2}} a_\eps \dot{W}_\eps\;, \qquad v_\eps[0] \in \mM\;.
\end{equation}
Let $\chi \in \cC_c^\infty (\RR)$ be even and $\chi = 1$ on $[-1,1]$. Fix $N_\eps$ sufficiently large (depending on $\eps$), whose actual value will be specified later. Define $\widetilde{W}_\eps$ by
\begin{equation} \label{e:W_truncated}
    \widetilde{W}_\eps[t] := \sum_{k \in \ZZ} \chi (k / N_\eps) \, B_k(t) \, e_k^\eps\;.
\end{equation}
Then $\widetilde{W}_\eps$ is a regularization of $\widetilde{W}_\eps$. The correlation for its time derivative is
\begin{equation*}
    \EE \big( \dot{\widetilde{W}}_\eps (s, y_1)\dot{\widetilde{W}}_\eps(t, y_2) \big) = \delta (s-t) \rho_\eps (y_1, y_2)\;,
\end{equation*}
where
\begin{equation} \label{e:rho}
    \rho_\eps (y_1, y_2) = \sum_{k \in \ZZ} \chi^2 (k/N_\eps) \, e_k^\eps(y_1) \, e_{-k}^\eps(y_2) = \frac{1}{\sqrt{L_\eps}} \sum_{k \in \ZZ} \chi^2 (k/N_\eps) \, e_k^\eps(y_1 - y_2)\;.
\end{equation}
Define $\widetilde{v}_\eps$ as the solution to 
\begin{equation} \label{e:v_eps_truncated}
    \d_t \widetilde{v}_\eps = \eps^{-2\gamma-1} \big( \Delta \widetilde{v}_\eps + f(\widetilde{v}_\eps) \big) + \eps^{-\frac{1}{2}} a_\eps \dot{\widetilde{W}}_\eps\;, \qquad \widetilde{v}_\eps [0] = v_\eps[0]\;.
\end{equation}
The introduction of the cutoff $N_\eps$ is twofold. First, the solution $\widetilde{v}_\eps[t]$ is smooth for every fixed $\eps$ and $t$, which makes the verification of the identity \eqref{e:magical_cancellation} and its higher order analogues much easier. A general version of this identity is given in Theorem~\ref{thm:functional_equation} below (see also Remark~\ref{rmk:functional_equation_cutoff}). Second, the cutoff also turns the formal evaluation of second order Fr\'echet derivatives on the diagonal (such as the last term in \eqref{e:xi_formal_eq}) to a justified operation (see \eqref{e:D2psi_cor_eps_ell_bound} below). Note that both of these use the qualitative property of the finite dimensionality of the noise but \textit{not} its quantitative bounds. 

Let $\kappa'$ be the parameter in Theorem~\ref{thm:Linftycloseness}. Choose $N_\eps$ sufficiently large such that 
\begin{equation} \label{e:cutoff_close}
    \PP\Big( \sup_{t\in[0, \, |\log\eps|]} \|\widetilde{v}_\eps[t] -v_\eps[t] \|_{\wW^{\alpha^*,p^*}} > \eps^{\gamma-\kappa'}\Big)<e^{-\frac{1}{\eps}}\;,
\end{equation}
and that Lemma~\ref{lem:rho_convergence} for $\rho_\eps$ holds. This can always be achieved if $N_\eps$ is large enough (for example, $N_\eps \geq e^{e^{1/\eps}}$). 

We define the higher-order correctors $\psi_{cor}^{\eps,\ell}$ as follows. For $\ell \geq 0$, define $\bar{\psi}^{\eps,\ell}$ recursively by 
\begin{equation} \label{e:psi_bar_eps_ell}
    \bar{\psi}^{\eps,\ell}(v) := \int_{0}^{+\infty} \langle \, (D^2 \bar{\psi}^{\eps,\ell-1}) \big(F^t(v)\big), \, \rho_\eps \, \rangle \, \md t\;, \qquad \bar{\psi}^{\eps,0} := \zeta\;, 
\end{equation}
where $D^2$ is the second Fr\'echet derivative with respect to the differential strcture induced by $\lL^\infty$. We define the $\ell$-th corrector $\psi_{cor}^{\eps,\ell}$ by
\begin{equation} \label{e:psi_cor_eps_ell}
    \psi_{cor}^{\eps,\ell}(v) := \Big( \frac{1}{2} \eps^{2\gamma} a_\eps^2 \big( \zeta(v) \big) \Big)^\ell \; \bar{\psi}^{\eps,\ell}(v)\;, \quad \ell \geq 0\;.
\end{equation}
Since $\rho_\eps$ is a finite linear combination of products of functions of one variable, the pairing operation in \eqref{e:psi_bar_eps_ell} is valid if $\bar{\psi}^{\eps,\ell-1}$ is twice Fr\'echet differentiable. The following theorem says this is indeed the case together with more quantitative information on $D^{k} \psi^{\eps,\ell}_{cor}$ for $k=0,1,2$.

\begin{thm} \label{thm:derivativesOfPsicork}
For every $\ell \geq 0$, there exists $\beta_\ell > 0$ such that $\bar{\psi}^{\eps,\ell}$ and $\psi_{cor}^{\eps,\ell}$ given in \eqref{e:psi_bar_eps_ell} and \eqref{e:psi_cor_eps_ell} are well-defined in $\vV_{\beta_\ell}$, and satisfy the bound
\begin{equation} \label{e:psi_cor_eps_ell_bound}
    \big| \psi_{cor}^{\eps,\ell}(v) \big| \lesssim \eps^{2\ell \gamma}\;, \qquad \ell \geq 1\;.
\end{equation}
Moreover, $\bar{\psi}^{\eps,\ell}$ and $\psi_{cor}^{\eps,\ell}$ are twice Fr\'echet differentiable in $\vV_{\beta_\ell}$. Their Fr\'echet derivatives
\begin{equation*}
    D^k \bar{\psi}^{\eps,\ell}\,, \; D^k \psi_{cor}^{\eps,\ell} : \; \; \vV_{\beta_\ell} \; \rightarrow \; \emL^k \big( (\lL^\infty)^k, \RR \big)\;, \qquad k = 1,2
\end{equation*}
have kernels for every $v \in \vV_{\beta_\ell}$ in the sense of \eqref{e:kernel_defn}. Furthermore, for every $p\geq 0$, the kernels of $D^k \psi_{cor}^{\eps,\ell}(v)$ (still with the same notation) satisfy the bounds
\begin{equation}\label{e:Dpsi_cor_eps_ell_bound}
    \sum_{i=1}^{2} \int_{\RR^2} \big|y_i-\zeta(v) \big|^p \cdot \Big(\prod_{j=1}^{2} \big| D\psi_{cor}^{\eps,\ell_j}(y_j;v) \big| \Big) \cdot \big|\rho_\eps(y_1,y_2)\big|\,\md y_1\,\md y_2 \lesssim \eps^{2(\ell_1+\ell_2)\gamma}\;,
\end{equation}
\begin{equation} \label{e:D2psi_cor_eps_ell_bound}
    \sum_{i=1}^{2} \int_{\RR^2} \big|y_i-\zeta(v) \big|^p \cdot \big| D^2 \psi_{cor}^{\eps,\ell}(y_1,y_2;v) \big| \cdot \big|\rho_\eps(y_1,y_2)\big|\,\md y_1\,\md y_2 \lesssim \eps^{2\ell\gamma}\;.
\end{equation}
All the bounds are uniform over $v \in \vV_{\beta_\ell}$ for the relevant $\ell$ (and over $v \in \vV_{\beta_{\ell_1} \wedge \beta_{\ell_2}}$ for \eqref{e:Dpsi_cor_eps_ell_bound}). 
\end{thm}

It turns out that formulas can be derived for certain quantities involving the first two Fréchet derivatives of \( \psi^{\varepsilon,0}_{cor} = \zeta \) evaluated at \( v \in \mathcal{M} \). These quantities correspond to those appearing in the limiting SDE in Theorem~\ref{thm:main}. Recall that the cutoff $N_\eps$ is sufficiently large such that Lemma~\ref{lem:rho_convergence} holds.

\begin{thm} \label{thm:zeta_continuity_bound}
For every $\theta \in \RR$, the kernels of $D \zeta (m_\theta)$ and $D^2 \zeta (m_\theta)$ are continuous functions. The constants
\begin{equation} \label{e:alpha}
    \alpha_1:= \bigg( \int_{\RR} \big| D \zeta (y; m_\theta) \big|^2 \,\md y \bigg)^{\frac{1}{2}}\;, \quad \alpha_2 := \int_\RR \big( y-\theta \big) \, D^2 \zeta (y,y;m_\theta) \,\md y
\end{equation}
are well-defined and independent of $\theta$. They have the expressions 
\begin{equation*}
    \alpha_1 = \frac{1}{\|m'\|_{\lL^2}}\;, \quad \alpha_2= - \frac{1}{\Vert m'\Vert_{\lL^2}}\int_0^{+\infty} \int_{\RR^2}y p_{t}^2(y,z;m) f''(m(z))m'(z)\,\md y\,\md z\,\md t\;,
\end{equation*}
where $p_t(y,z;m)$ is defined in \eqref{e:p_t} later. Furthermore, we have 
\begin{equation}\label{e:Dzetacloseness}
    \Big| \int_{\RR^2}  \Big( \prod_{j=1}^{2} D \zeta (y_j; v) \Big) \cdot \rho_\eps (y_1, y_2) \,\md y_1 \,\md y_2 -\alpha_1^2 \Big| \lesssim \dist(v,\mM) + \eps\;,
\end{equation}
\begin{equation}\label{e:D2zetaclossness}
    \Big| \frac{1}{2} \int_{\RR^2} \Big(\sum_{i=1}^{2} \big(y_i- \zeta(v) \big) \Big) \cdot D^2\zeta(y_1,y_2;v) \cdot \rho_\eps (y_1,y_2) \, \md y_1 \,\md y_2 - \alpha_2\Big| \lesssim\dist(v,\mM)+\eps
\end{equation}
uniformly over $v\in\vV_{\beta_0}$. 
\end{thm}

The following theorem guarantees that \( \{\psi_{cor}^{\varepsilon,\ell}\}_{\ell \geq 0} \) indeed act as correctors with the desired cancellation effect. For notational simplicity, we also write $ \bar{\psi}^{\varepsilon,-1} = \psi^{\varepsilon,-1}_{cor} = 0 $.

\begin{thm} \label{thm:functional_equation}
    Let $\beta_\ell > 0$ be as in Theorem~\ref{thm:derivativesOfPsicork}. For $\ell\geq 0$ and $v\in\vV_{\beta_{\ell}}\cap\cC^{5/2}$, we have the identity
    \begin{equation} \label{e:functional_equation}
        \bracket{D \bar{\psi}^{\eps,\ell}(v), \Delta v + f(v)} + \bracket{  D^2 \bar{\psi}^{\eps,\ell-1} (v), \rho_\eps }= 0\;.
    \end{equation}
    As a consequence, one has
    \begin{equation} \label{e:corrector_cancel_general}
    \Big| \, \bracket{D\psi_{cor}^{\eps,\ell}(v), \Delta v + f(v)}+ \frac{\eps^{2\gamma}}{2} \, a^2_\eps\big(\zeta(v)\big) \cdot \bracket{D^2 \psi_{cor}^{\eps,\ell-1}(v), \rho_\eps} \, \Big| \lesssim \eps^{2 \ell \gamma + \frac{1}{2}} \mathbf{1}_{\ell \geq 2}
    \end{equation}
    uniformly over $v \in \vV_{\beta_\ell} \cap \cC^{5/2}$.
\end{thm}

\begin{rmk} \label{rmk:functional_equation_cutoff}
The original process $v_\eps[t]$ given in \eqref{e:v_eps_localized} belongs to $\cC^{\alpha}$ for every $\alpha<\frac{1}{2}$. With more technical efforts, it is possible to show that the kernel $D \bar{\psi}^{\eps,\ell}(v)$ belongs to $\wW^{\alpha',1}$ for all $\alpha' \in (\frac{3}{2},2)$. Thus, one can establish the identity \eqref{e:functional_equation} for $v \in \cC^{\alpha}$ for $\alpha \in (0,\frac{1}{2})$ and use it for $v = v_\eps[t]$. 

On the other hand, the $\cC^{5/2}$ regularity assumption on $v$ makes the justification of \eqref{e:functional_equation} much more straightforward, without the need to show the additional regularity of $D \bar{\psi}^{\eps,\ell}$. It only uses the qualitative fact that $v$ has more than two derivatives but not any quantitative information on its $\cC^{5/2}$ norm. We will use Theorem~\ref{thm:functional_equation} for $v = \widetilde{v}_\eps [t]$, which does belong to $\cC^{5/2}$ for every $\eps>0$.
\end{rmk}

\begin{rmk}
    The above construction leverages the first-order term in \eqref{e:xi_formal_eq} to cancel the potentially divergent first term in \eqref{e:natural_drift} and proceeds inductively. This gives the recursive identity \eqref{e:functional_equation}. One may wonder if it is possible to use together the first and second-order terms in \eqref{e:xi_formal_eq}. This leads to the second-order functional PDE (for $\psi_{cor}^\eps$)
    \begin{equation} \label{e:infinite_pde_2nd}
        \begin{split}
        &\frac{1}{2\sqrt{\eps}} \int_\RR a_\eps^2(y) \, D^2 \psi^\eps_{cor}(y,y;v) \,\md y + \eps^{-2\gamma-\frac{1}{2}}\bracket{D\psi^\eps_{cor}(v), \Laplace v + f(v)}\\
        &+\frac{a^2_\eps\big(\zeta(v)\big)}{2 \sqrt{\eps}}  \int_\RR D^2 \zeta(y,y;v)\,\md y = 0\;.
        \end{split}
    \end{equation}
    Let $\Phi_\eps^t(v)$ be the solution to \eqref{e:main_eqn} at time $t$ with initial data $v$. At least formally, one can check the expression 
    \begin{equation*}
        \psi_{cor}^\eps (v) := \frac{\eps^{2\gamma}}{2} \EE \int_{0}^{+\infty} \int_\RR a_\eps^2 \left( \zeta \big( \Phi_\eps^t (v) \big) \right) \cdot D^2 \zeta \big(y,y; \Phi_\eps^t(v) \big) \, \md y \, \md t
    \end{equation*}
    satisfies \eqref{e:infinite_pde_2nd}. But making this procedure rigorous requires understanding exponential mixing of the original SPDE \eqref{e:main_eqn} with noise acting on an interval of size $\eps^{-\frac{1}{2}}$. We leave it for further investigation. 
\end{rmk}

The statements in Theorems~\ref{thm:derivativesOfPsicork}, ~\ref{thm:zeta_continuity_bound} and~\ref{thm:functional_equation} seem heuristically clear from the expressions \eqref{e:psi_bar_eps_ell} and \eqref{e:psi_cor_eps_ell}. However, the expression \eqref{e:psi_bar_eps_ell} is only formal since it involves integration in whole time, and even the well-posedness of this expression is not a priori clear. This is part of Theorem~\ref{thm:derivativesOfPsicork}. The rigorous justification of these theorems turns out to be nontrivial, and involves detailed analysis of all high-order derivatives of the deterministic flow $(F^t)_{t \geq 0}$. Hence we defer their proofs to the next sections, and proceed by assuming them for now.

\subsection{Convergence of the interface process -- proof of Theorem~\ref{thm:main}} \label{sec:proof_of_main}

We now prove Theorem~\ref{thm:main} by assuming Theorem~\ref{thm:Linftycloseness}, as well as Theorems~\ref{thm:derivativesOfPsicork}, ~\ref{thm:zeta_continuity_bound} and~\ref{thm:functional_equation} on the correctors. With these theorems, the proof for the convergence follows the same procedure as in \cite[Section~8]{Fun95}. 

Recall the localized and truncated noise $\widetilde{W}_\eps$ from \eqref{e:W_truncated}, and the process $\widetilde{v}_\eps$ from \eqref{e:v_eps_truncated} with stochastic forcing $\widetilde{W}_\eps$. We first introduce a stopped version of $\xi^\eps$ based on $\widetilde{v}_\eps$ and the higher-order correctors. Let $\alpha^*$ and $p^*$ be as in Theorem~\ref{thm:Linftycloseness}. Fix $0<\kappa'<\kappa<\gamma$. Define
\begin{equation*}
    \tau_\eps:= \inf \Big\{t>0:\dist_{\wW^{\alpha^*,p^*}}(\widetilde{v}_\eps[t], \mM) > 2\eps^{\gamma-\kappa'}\Big\} \;.
\end{equation*}
Let $L$ be the smallest integer that is bigger than $\frac{1}{4\gamma}$. For $t > 0$, define
\begin{equation}\label{e:xiteps}
    \xi_t^\eps := \sqrt{\eps} \, \sum_{\ell=0}^{L} \psi_{cor}^{\eps,\ell} \big( \widetilde{v}_\eps [t \wedge \tau_\eps] \big)\;,
\end{equation}
where $\psi_{cor}^{\eps,\ell}$ is defined in \eqref{e:psi_cor_eps_ell} with $\psi_{cor}^{\eps,0} =\zeta$. 

We first show that the process $\xi^\eps$ satisfies \eqref{e:closeness_process}. Fix $T>0$ and $N>0$ arbitrary. By Theorem~\ref{thm:Linftycloseness} and \eqref{e:cutoff_close}, we have $\PP(\tau_\eps \leq T) \lesssim \eps^N$. Together with the bound \eqref{e:cutoff_close} and the embedding $\wW^{\alpha^*, p^*} \hookrightarrow \lL^\infty$, we have
\begin{equation*}
    \PP \Big( \sup_{t \in [0,T]} \|v_\eps[t] - \widetilde{v}_\eps (t \wedge \tau_\eps)\|_{\lL^\infty} > C \eps^{\gamma-\kappa'} \Big) \lesssim \eps^N\;.
\end{equation*}
Also, by the bound \eqref{e:deterministic_flow_convergence} with $t=0$, the bound \eqref{e:psi_cor_eps_ell_bound} and the embedding $\wW^{\alpha^*, p^*} \hookrightarrow \lL^\infty$, we have
\begin{equation*}
    \|\widetilde{v}_\eps[t \wedge \tau_\eps] - m_{\xi_t^\eps / \sqrt{\eps}} \|_{\lL^\infty} \lesssim \dist_{\wW^{\alpha^*, p^*}} \big( \widetilde{v}_\eps[t\wedge\tau_\eps], \mM \big) + \sum_{\ell=1}^{L} |\psi^{\eps,\ell} \big( \widetilde{v}_\eps[t \wedge \tau_\eps] \big)| \lesssim \eps^{\gamma-\kappa'}
\end{equation*}
for all $t \geq 0$. The claim \eqref{e:closeness_process} then follows. 

To prove weak convergence of $\xi^\eps$ to the solution of the SDE \eqref{e:limit_sde}, we need to apply It\^o's formula to $\xi^\eps$. The map $\psi_{cor}^{\eps,\ell}: \vV_{\beta_\ell} \rightarrow \RR$ originally defined on an open set of $\lL^\infty$ is not suitable for this purpose since $\lL^\infty$ is not separable. Hence we will use the fact that $\widetilde{v}_\eps$ is also in a small $\wW^{\alpha^*, p^*}$-neighborhood of $\mM$, which is a UMD space (see \cite[Example~4.2.18]{MR3617205}) and It\^o's formula (\cite[Theorem~2.4]{MR2422709}) can be applied. 

Recall $\beta_\ell>0$ from Theorem~\ref{thm:derivativesOfPsicork}. Let $\beta_{\min} := \min\limits_{0 \leq \ell \leq L} \beta_\ell$. For $r>0$, let $\widetilde{\vV}_r$ be the set of functions whose $\wW^{\alpha^*, p^*}$ distance from $\mM$ is less than $r$. By Sobolev inequality, $\widetilde{\vV}_r \subset \vV_{\beta_{\min}}$ for sufficiently small $r$. 

Recall from Theorem~\ref{thm:derivativesOfPsicork} the well-posedness and twice Fr\'echet differentiability of $\psi_{cor}^{\eps,\ell}$ in $\vV_{\beta_{\min}}$. By Lemma~\ref{lem:restriction_derivative}, for sufficiently small $r$, the restriction of $\psi_{cor}^{\eps,\ell}$ to $\widetilde{\vV}_r$ is also twice Fr\'echet differentiable with respect to the structure induced by $\wW^{\alpha^*, p^*}$. We denote the $\wW^{\alpha^*, p^*}$ induced Fr\'echet derivatives by $D_*^k \psi_{cor}^{\eps,\ell}$. Hence, applying It\^o's formula to $\xi_t^\eps$, we get
\begin{equation*}
    \xi_t^\eps = \xi_0 + \mu^\eps_{t\wedge\tau_\eps} +\frac{1}{2}\int_0^{t\wedge\tau_\eps} b^\eps_s \,\md s\;,
\end{equation*}
where
\begin{equation}\label{e:muteps}
    \mu^\eps_t = \sum_{\ell=0}^{L} \int_0^t \left\langle D_*\psi^{\eps,\ell}_{cor} (\widetilde{v}_{\eps}[s]), \, a_\eps \,\md \widetilde{W}_\eps[s] \right\rangle\;,
\end{equation}
and
\begin{equation} \label{e:bteps_W}
    b_t^\eps =  \sum_{\ell=0}^{L} \Big( 2 \eps^{-2\gamma-\frac{1}{2}} \left\langle D_*\psi_{cor}^{\eps,\ell} (\widetilde{v}_\eps), \, \Delta \widetilde{v}_\eps + f(\widetilde{v}_\eps) \right\rangle + \frac{1}{\sqrt{\eps}}  \langle D_*^2 \psi_{cor}^{\eps,\ell}(\widetilde{v}_\eps)\,, \, a_\eps^{\otimes 2} \rho_\eps \rangle \Big)\;.
\end{equation}
Here, $\mu_t^\eps$ and $b_t^\eps$ are both defined for $t \leq \tau_\eps$. Also, all appearances $\widetilde{v}_\eps$ in $b_t^\eps$ are evaluated at the time $t$, which we omit in notation for simplicity. The notation $a_\eps^{\otimes 2}$ standards for the function $a_\eps^{\otimes 2} (y_1, y_2) = a_\eps(y_1) a_\eps(y_2)$. The parings on the right-hand sides of \eqref{e:muteps} and \eqref{e:bteps_W} are between elements in $(\wW^{\alpha^*, p^*})^{k}$ and the linear functionals on it $(D_*^k \psi_{cor}^{\eps,\ell})$. 

By Theorem~\ref{thm:derivativesOfPsicork} and Lemma~\ref{lem:restriction_derivative}, the Fr\'echet differentiation $D_*^k$ with respect to the structure induced by $\wW^{\alpha^*, p^*}$ in both \eqref{e:muteps} and \eqref{e:bteps_W} can be replaced by the Fr\'echet differentiation $D^k$ with respect to the structure induced by $\lL^\infty$. In particular, we can re-write $b_t^\eps$ as
\begin{equation} \label{e:bteps}
    \begin{split}
    &\phantom{11111}b_t^\eps = 2 \, \eps^{-2\gamma-\frac{1}{2}} \sum_{\ell=0}^{L} \bigg( \left\langle D\psi_{cor}^{\eps,\ell} (\widetilde{v}_\eps), \, \Delta \widetilde{v}_\eps + f(\widetilde{v}_\eps) \right\rangle\\
    &\phantom{111111111111}+ \frac{\eps^{2\gamma}}{2} \, a_\eps^2 \big( \zeta(\widetilde{v}_\eps) \big) \left\langle D^2\psi_{cor}^{\eps,\ell-1} (\widetilde{v}_\eps), \, \rho_\eps \right\rangle \bigg)\\
    &+\frac{1}{\sqrt{\eps}} \sum_{\ell=0}^{L-1} \left\langle D^2 \psi_{cor}^{\eps,\ell} (\widetilde{v}_\eps), \, \left( a_\eps^{\otimes 2} - a_\eps^2 \big( \zeta (\widetilde{v}_\eps) \big) \right) \rho_\eps  \right\rangle + \frac{1}{\sqrt{\eps}} \left\langle D^2 \psi_{cor}^{\eps, L} (\widetilde{v}_\eps), \, a_\eps^{\otimes 2} \rho_\eps \right\rangle\;,
    \end{split}
\end{equation}
where we take the convention that $\psi_{cor}^{\eps,\ell-1} = 0$ for $\ell = 0$. The inputs in the bracket operations by $D \psi^{\eps,\ell}_{cor}$ and $D \psi^{\eps,\ell}_{cor}$ are now viewed as elements in $\lL^\infty$ or $(\lL^\infty)^2$. The space $\wW^{\alpha^*, p^*}$ is used only as an intermediate step to derive the SDE with It\^o's formula. Once we have derived the SDE, only $\lL^\infty$ bounds in Theorem~\ref{thm:derivativesOfPsicork} are needed to analyze the quantities \eqref{e:muteps} and \eqref{e:bteps}. For notational simplicity, we sometimes write $\widetilde{v}_\eps = \widetilde{v}_\eps[t]$, and all the bounds hold uniformly over $t \leq \tau_\eps$. 

We now show convergence of the terms on the right-hand sides of \eqref{e:muteps} and \eqref{e:bteps}. The choice of $L$ and construction of the correctors are such that the terms
\begin{equation*}
    \int_0^{t \wedge \tau_\eps} \bracket{D\psi^{\eps,0}_{cor} (\widetilde{v}_{\eps}[s]), \, a_\eps \,\md \widetilde{W}_\eps[s]} \quad \text{and} \quad \frac{1}{\sqrt{\eps}} \left\langle D^2 \psi_{cor}^{\eps,0} \big( \widetilde{v}_\eps \big), \, \left( a_\eps^{\otimes 2} - a_\eps^2 \big( \zeta (\widetilde{v}_\eps) \right) \rho_\eps  \right\rangle
\end{equation*}
in the diffusion and drift are the only ones that converge to a nontrivial limit as $\eps \rightarrow 0$, while all other terms in $\mu^\eps$ and $b^\eps$ vanish in the limit. 

Let $\alpha_1, \alpha_2$ be given by \eqref{e:alpha} in Theorem~\ref{thm:zeta_continuity_bound}. We first consider the drift term $b_t^\eps$ in \eqref{e:bteps}. By Theorem~\ref{thm:functional_equation}, we have
\begin{equation*}
    \eps^{-2\gamma-\frac{1}{2}} \sum_{\ell=0}^{L} \bigg| \left\langle D\psi_{cor}^{\eps,\ell} (\widetilde{v}_\eps), \, \Delta \widetilde{v}_\eps + f(\widetilde{v}_\eps) \right\rangle + \frac{\eps^{2\gamma}}{2} \, a_\eps^2 \big( \zeta(\widetilde{v}_\eps) \big) \left\langle D^2\psi_{cor}^{\eps,\ell-1} (\widetilde{v}_\eps), \, \rho_\eps \right\rangle \bigg| \lesssim \eps^{2\gamma}\;.
\end{equation*}
By \eqref{e:D2psi_cor_eps_ell_bound} with $p=0$, we have
\begin{equation*}
    \frac{1}{\sqrt{\eps}} \Big| \left\langle D^2 \psi_{cor}^{\eps, L} (\widetilde{v}_\eps), \, a_\eps^{\otimes 2} \rho_\eps \right\rangle \Big| \lesssim \eps^{2L\gamma - \frac{1}{2}}\;,
\end{equation*}
which vanishes at a positive power of $\eps$ since $L>\frac{1}{4\gamma}$. For the remaining terms in \eqref{e:bteps}, we first consider the case $\ell\geq 1$. By mean value inequality, we have
\begin{equation} \label{e:mean_value_a2}
    \big|a_\eps(y_1)a_\eps(y_2)-a^2_\eps\big(\zeta(\widetilde{v}_\eps)\big)\big| \lesssim \sqrt{\eps} \, \sum_{i=1}^{2} \big| y_i- \zeta(\widetilde{v}_\eps) \big| \;.
\end{equation}
Then by \eqref{e:D2psi_cor_eps_ell_bound} with $p=1$, we have
\begin{equation*}
    \frac{1}{\sqrt{\eps}} \sum_{\ell=1}^{L-1} \Big| \left\langle D^2 \psi_{cor}^{\eps,\ell} (\widetilde{v}_\eps), \, \left( a_\eps^{\otimes 2} - a_\eps^2 \big( \zeta (\widetilde{v}_\eps) \big) \right) \rho_\eps  \right\rangle \Big| \lesssim \eps^{2\gamma}\;.
\end{equation*}
It remains to consider the quantity with $\ell=0$, which is
\begin{equation} \label{e:drift_0}
    \frac{1}{\sqrt{\eps}}\int_{\RR^2} \Big( a_\eps(y_1)a_\eps(y_2) - a_\eps^2 \big( \zeta(\widetilde{v}_\eps) \big) \Big) \cdot D^2 \zeta(y_1,y_2;\widetilde{v}_\eps) \cdot\rho_\eps(y_1,y_2)\,\md y_1\,\md y_2\;.
\end{equation}
Taylor expanding $a_\eps$ at $\zeta(\widetilde{v}_\eps)$, we have
\begin{equation} \label{e:Taylor_a2_tensor}
    a_\eps(y_1)a_\eps(y_2)-a^2_\eps\big(\zeta(\widetilde{v}_\eps)\big) = \frac{\sqrt{\eps}}{2} \, \big(a^2\big)'\big( \sqrt{\eps} \zeta (\widetilde{v}_\eps) \big) \sum_{j=1}^{2} \big(y_j- \zeta(\widetilde{v}_\eps) \big) + r_t^\eps(y_1,y_2)\;,
\end{equation}
where $r_t^\eps(y_1,y_2)$ satisfies the pointwise bound
\begin{equation*}
    |r_t^\eps(y_1,y_2)|\lesssim \eps \sum_{i=1}^2\big|y_i-\zeta(\widetilde{v}_\eps)\big|^2\;.
\end{equation*}
By \eqref{e:D2psi_cor_eps_ell_bound} with $\ell = 0$ and $p =2$, the contribution from $r^\eps_t$ to \eqref{e:drift_0} is $\oO(\sqrt{\eps})$. As for the first term on the right-hand side of \eqref{e:Taylor_a2_tensor}, by \eqref{e:D2zetaclossness}, we have
\begin{align*}
    \Big| \frac{1}{2} \int_{\RR^2} \Big(\sum_{i=1}^{2} \big(y_i- \zeta(\widetilde{v}_\eps) \big) \Big)\cdot D^2 \zeta(y_1,y_2;\widetilde{v}_\eps) \cdot\rho_\eps(y_1,y_2)\,\md y_1\,\md y_2 -\alpha_2 \Big| \lesssim \dist(\widetilde{v}_\eps,\mM)+\eps\;.
\end{align*}
Combining all the above, we deduce that 
\begin{align*}
    \sup_{t\geq 0} \Big| b^\eps_{t\wedge\tau_\eps}- 2 \alpha_2 \, a\big(\sqrt{\eps}\zeta(\widetilde{v}_\eps[t \wedge \tau_\eps])\big) \, a'\big(\sqrt{\eps}\zeta(\widetilde{v}_\eps[t \wedge \tau_\eps])\big) \Big|
\lesssim \eps^{\frac{1}{2} \, \wedge \, (\gamma-\kappa') \, \wedge \, (2L\gamma-\frac{1}{2})}\;.
\end{align*}
Using \eqref{e:xiteps} and \eqref{e:psi_cor_eps_ell_bound}, we further deduce that
\begin{equation}\label{e:bteps_convergence}
    \sup_{t\geq 0} \vert b^\eps_{t\wedge\tau_\eps}- 2 \alpha_2 \, a(\xi_t^\eps) \, a'(\xi_t^\eps) \vert \lesssim \eps^{\frac{1}{2} \, \wedge\, (\gamma-\kappa') \, \wedge \, (2L\gamma-\frac{1}{2})}\;.
\end{equation}
For the martingale term $\mu_t^\eps$ in \eqref{e:muteps}, the time derivative of its quadratic variation process is given by
\begin{equation*}
   \frac{\md}{\md t} [\mu^\eps]_t = \sum_{\ell_1, \ell_2 = 0}^{L} \int_{\RR^2} \prod_{j=1}^{2} \Big( a_\eps(y_j) D \psi_{cor}^{\eps,\ell_j} \big( y_j; \widetilde{v}_\eps \big) \Big) \cdot \rho_\eps (y_1, y_2) \, \md y_1 \,\md y_2\;, \quad t \leq \tau_\eps\;.
\end{equation*}
We first consider the term with $\ell_1 = \ell_2 = 0$. Recall $\psi_{cor}^{\eps,0} = \zeta$. By the bound \eqref{e:Dpsi_cor_eps_ell_bound} for $\ell_1=\ell_2=0$ and $p=1$ and the mean value inequality \eqref{e:mean_value_a2}, we have
\begin{equation*}
    \Big| \int_{\RR^2} \Big( a_\eps(y_1)a_\eps(y_2) - a_\eps^2 \big( \zeta(\widetilde{v}_\eps) \big) \Big) \cdot\Big(\prod_{j=1}^2 D\zeta(y_j;\widetilde{v}_\eps)\Big)\cdot\rho_\eps(y_1,y_2)\,\md y_1\,\md y_2\Big| \lesssim \sqrt{\eps}\;.
\end{equation*}
Combining it with \eqref{e:Dzetacloseness}, we get
\begin{equation*}
    \Big|\int_{\RR^2} \prod_{j=1}^{2} \Big( a_\eps(y_j) D \zeta \big( y_j; \widetilde{v}_\eps \big) \Big) \cdot \rho_\eps(y_1,y_2)\,\md y_1\,\md y_2 - \alpha_1^2 \, a^2\big(\sqrt{\eps}\zeta(\widetilde{v}_\eps)\big)\Big| \lesssim \eps^{\frac{1}{2} \wedge (\gamma-\kappa')}\;.
\end{equation*}
Moreover, by \eqref{e:Dpsi_cor_eps_ell_bound} with $p=0$, we see that if $\ell_1 \geq 1$ or $\ell_2 \geq 1$, we have
\begin{equation*}
    \int_{\RR^2} \prod_{j=1}^{2} \Big| a_\eps(y_j) D \psi_{cor}^{\eps,\ell_j} \big( y_j; \widetilde{v}_\eps \big) \Big| \cdot |\rho_\eps (y_1, y_2)| \, \md y_1 \,\md y_2 \lesssim \eps^{2\gamma}\;.
\end{equation*}
Together with the bound \eqref{e:psi_cor_eps_ell_bound}, we deduce
\begin{equation}\label{e:muteps_convergence}
    \sup_{t\geq 0} \, \Big\vert \frac{\md}{\md t} [\mu^\eps]_{t \wedge \tau_\eps} - \alpha^2_1a^2(\xi_t^\eps) \Big\vert \lesssim \eps^{\frac{1}{2} \wedge(\gamma-\kappa')}\;.
\end{equation}
In particular, the bounds \eqref{e:bteps_convergence} and \eqref{e:muteps_convergence} give
\begin{equation*}
    \EE |\xi_t^\eps-\xi_s^\eps|^4\lesssim (t-s)^2
\end{equation*}
for all $s,t \in \RR^+$. This implies for every $T>0$, $\xi_t^\eps$ is tight on $\cC \big([0,T],\RR \big)$. The convergence in law of $\xi_t^\eps$ to the SDE \eqref{e:limit_sde} then follows from combining \eqref{e:bteps_convergence} and \eqref{e:muteps_convergence} together with tightness. This concludes the proof of Theorem~\ref{thm:main}.

\section{Analysis of the deterministic flow}
\label{sec:deterministic}

The key to the proof of Theorem~\ref{thm:derivativesOfPsicork} is controlling (the first two) Fr\'echet derivatives of $\bar{\psi}^{\eps,\ell}$. The recursive relation \eqref{e:psi_bar_eps_ell} indicates that it can be boiled down to the study of high-order derivatives of $\zeta$ and $F^t$. Since Fr\'echet derivatives of $\zeta$ are closely related to those of $m_\zeta$, and the latter is the limit of $F^t$ as $t \rightarrow +\infty$, this suggests that the essential object to study is the behavior of high-order derivatives of the deterministic flow $F^t$ for large $t$. 

The first two derivatives of $\zeta$ have been studied in \cite{Fun95}. Building on the arguments there, we introduce new ideas to analyze all high-order derivatives of $F^t$ in a systematic way, which lead to effective bounds on high-order derivatives of $\zeta$. It also simplifies some arguments in \cite{Fun95} at the same time. 

We use $[k]$ to denote the set $\{1,2,\dots, k\}$. For $S \subset [k]$, vector $\vec{y} = (y_1, \dots, y_k)$ and functions $(\varphi_j)_{j \in [k]}$, we write 
\begin{equation*}
    \vec{y}_{S} := (y_j)_{j \in S}\;, \qquad \vec{\varphi}_{S} := (\varphi_j)_{j \in S}\;.
\end{equation*}
These notations are used throughout the rest of the article. 

\subsection{Strategy of the proof} \label{sec:deterministic_overview}

To establish infinite Fr\'echet differentiability of $F^t$ as well as properties of the derivatives, for every $v \in \lL^\infty$, $t > 0$ and $k \geq 1$, we first identify a function $p_t^{(k)}(v): \RR^k \times \RR \rightarrow \RR$ as a candidate for the kernel of $D^k F^t(v)$ in the sense that 
\begin{equ}
    \langle D^k F^t(v), \, \vec{\varphi}_{[k]}  \rangle(z) = \int_{\RR^k} p_{t}^{(k)}(\vec{y},z;v) \, \prod_{j=1}^{k} \varphi_j(y_j) \,\md \vec{y}\;.
\end{equ}
For $v \in \lL^\infty$  and $y \in \RR$, let $p_t(y, \cdot\,; v) = p_t^{(1)}(y, \cdot\,; v)$ be the solution to
\begin{equation} \label{e:p_t}
    \d_t \, p_t (y, \cdot\,; v) = \Delta \, p_t (y, \cdot\,; v) + f'\big( F^t(v) \big) \, p_t (y, \cdot\,; v)\;, \qquad p_0(y, \cdot\,; v) = \delta_y\;.
\end{equation}
One can formally differentiate $F^t(v)$ with respect to $v$ and see that the solution $p_t(v)$ to \eqref{e:p_t} is indeed the natural candidate for $D F^t(v)$. 

Keep formally differentiating $p_t$ with respect to $v$, for every $k\geq 2$ and $\vec{y} \in \RR^k$, one obtains an equation for $u^{(k)}(t,\cdot) := p_t^{(k)}(\vec{y},\cdot \, ;v)$ of the form
\begin{equ}
\label{e:high_order_derivatives}
    \d_t u^{(k)} = \Laplace u^{(k)} + f'\big(F^t(v)\big) \, u^{(k)} + g^{(k)}_t(v)\;, \quad\quad u^{(k)}[0]=0\;,
\end{equ}
where $g^{(k)}_t(v)$ is a combination of products of $p_{t}^{(j)}(v)$ for $j \leq k-1$. In Proposition~\ref{prop:linearized_PDE} we show how to analyze such an equation. Roughly speaking, for short time we use the fact that the kernel of $\Laplace + f'\big(F^t(v)\big)$ behaves like a Gaussian. For the long-time behavior, we exploit the fact that $F^t(v)$ converges to $m_{\zeta(v)}$ exponentially fast if $v \in \vV_\beta$, so that $\Laplace + f'\big(F^t(v)\big)$ behaves like $\Laplace + f'(m_{\zeta(v)})$, whose spectrum is well-studied. 

In the rest of the section, we first give some essential facts on the deterministic flow $(F^t)_{t \geq 0}$ in Section~\ref{sec:deterministic_preliminaries}. In Section~\ref{sec:linear_PDE}, we prove Proposition~\ref{prop:linearized_PDE}, which gives a systematic analysis of the generalized form of \eqref{e:high_order_derivatives}. We then use it to analyze $p_t^{(k)}$ by induction in Section~\ref{sec:kernel}, where we will further assume $v\in\vV_\beta$. Finally in Proposition~\ref{prop:Ft_derivative}, we establish that $F^t$ is indeed infinitely Fr\'echet differentiable in the desired sense by checking that $p_t^{(k)}$ does act as the kernel of its $k$-th derivative. Note that the Fr\'echet differentiability of $F^t$ itself does not require the quantitative properties of $p_t^{(k)}$'s developed in Sections~\ref{sec:linear_PDE} or~\ref{sec:kernel}, but the presentation of its proof uses many of the notations developed in the systematic analysis in the two previous subsections. So we put it in Section~\ref{sec:Ft_derivative}.

\subsection{Preliminaries}
\label{sec:deterministic_preliminaries}
We make some preparations before we start. 
\begin{lem} \label{lem:statationary_exponential_decay}
    The function $m$ defined in \eqref{e:m} is smooth. There exist $c>0$ and $A_+, A_-\in \RR$ such that 
    \begin{equ}
    \label{e:m_infinite_behavior}
        \lim_{x\rightarrow + \infty} e^{cx}(1-m(x)) = A_+\;, \quad \quad \lim_{x\rightarrow - \infty} e^{-cx}(-1-m(x)) = A_-\;.
    \end{equ}
    Moreover, for the same $c$ above, for every $k\in\NN$, we have
    \begin{equ}
    \label{e:m_exponential_decay}
        |m^{(k)}(x)|\lesssim_k e^{-c |x|}\;.
    \end{equ}
\end{lem}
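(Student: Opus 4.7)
The smoothness of $m$ is a routine bootstrap: $m$ is $C^2$ by construction, and differentiating $m'' = -f(m)$ successively gives $m^{(k+2)} = -(f\circ m)^{(k)}$, so $m \in C^\infty$ by induction on $k$. The heart of the lemma is the identification of the exponential rate $c_0$ and the corresponding bounds on derivatives. I will work at $x \to +\infty$; the statements at $-\infty$ come for free from the oddness of $m$. Indeed, since $f$ is odd, $x \mapsto -m(-x)$ also solves the stationary problem \eqref{e:m} with the same normalization and boundary conditions, hence equals $m$ by uniqueness. Consequently $A_1 = -A_0$, and the rate is the same on both ends because $f'(-1) = f'(1)$.

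For the asymptotic at $+\infty$, the key tool is the first integral. Multiplying $m'' + f(m) = 0$ by $m'$ and integrating, together with $m(\pm\infty) = \pm 1$ and $m'(\pm\infty) = 0$ (the latter from monotonicity and boundedness of $m''$), gives $\tfrac{1}{2}(m')^2 + F(m) = F(1)$, where $F' = f$. Taylor-expanding $F$ at $m = 1$ and using $f(1) = 0$ together with $c_0 := \sqrt{-f'(1)} > 0$, this becomes
\begin{equation*}
    (m'(x))^2 = c_0^2 \,(1 - m(x))^2 \,\bigl(1 + O(1 - m(x))\bigr).
\end{equation*}
Taking the positive square root (since $m$ is increasing) and dividing by $1-m$, I rewrite the identity as $[-\ln(1-m)]'(x) = c_0 + h(x)$ with $h(x) = O(1-m(x))$. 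A rough inequality $m'(x) \geq (c_0 - \eps)(1 - m(x))$ for $x$ large, again from the same conservation law, first yields the preliminary estimate $1 - m(x) \leq C_\eps e^{-(c_0-\eps)x}$, which makes $h$ integrable on $[0,\infty)$. Integrating from $0$ to $x$ and sending $x \to +\infty$ then upgrades this to the precise asymptotic $e^{c_0 x}(1-m(x)) \to A_0$ for some $A_0 > 0$.

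The bound $|m'(x)| \lesssim e^{-c_0 x}$ on $[0,\infty)$ is then immediate from $m'(x) \sim c_0(1-m(x))$. For $k \geq 2$, successively differentiating $m'' = -f(m)$ shows that $m^{(k)}(x)$ is a finite sum of terms of the form $g(m(x)) \prod_{j=1}^{k-1}\bigl(m^{(j)}(x)\bigr)^{a_j}$, where $g$ is a polynomial in derivatives of $f$ evaluated at $m$ (hence uniformly bounded in $x$) and each monomial satisfies $\sum_j a_j \geq 1$. Induction on $k$ with the bound for lower-order derivatives gives $|m^{(k)}(x)| \lesssim_k e^{-c_0 x}$ for $x \geq 0$; the analogous estimate on $(-\infty, 0]$ follows from $|m^{(k)}(x)| = |m^{(k)}(-x)|$, a direct consequence of the oddness of $m$.

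The only non-routine point is upgrading the exponential rate to the \emph{precise} limit $A_0$, which requires $h \in L^1([0,\infty))$. This is handled by the bootstrap sketched above: a rough exponential bound with rate $c_0 - \eps$ is available immediately from the conservation law, and this suffices to justify integrability of the higher-order error. Once this is in hand, all the derivative bounds are a routine induction built on the same rate $c_0 = \sqrt{-f'(1)}$.
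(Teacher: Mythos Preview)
Your argument is correct and, in fact, more self-contained than the paper's. The paper simply cites \cite{BCS20} for the smoothness, the asymptotics \eqref{e:m_infinite_behavior}, and the $k=1$ decay, and then runs the same induction on $k$ via the differentiated equation that you do. Your contribution is to supply the asymptotics directly from the first integral $\tfrac{1}{2}(m')^2 + F(m) = F(1)$: the bootstrap from a crude rate $c_0-\eps$ to the sharp limit $A_0 = \exp\bigl(-\int_0^\infty h\bigr)$ is clean and identifies $c_0 = \sqrt{-f'(1)}$ explicitly, which the paper's proof does not. The reduction to $x>0$ via the oddness $m(-x)=-m(x)$ is also a nice simplification.

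One cosmetic slip: your claim that every monomial in $m^{(k)}$ carries $\sum_j a_j \geq 1$ fails at $k=2$, where $m'' = -f(m)$ has no derivative factor. The fix is immediate and is exactly what the paper does for this case: $|f(m(x))| = |f(m(x))-f(1)| \lesssim |1-m(x)| \lesssim e^{-c_0 x}$, using the asymptotic you have already established. With that adjustment your induction for $k\geq 3$ goes through as written.
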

\begin{proof}
    Smoothness of $m$, \eqref{e:m_infinite_behavior} and the exponential decay \eqref{e:m_exponential_decay} for $k=1$ are proved in \cite{BCS20}. \eqref{e:m_exponential_decay} for $k = 2$ follows from \eqref{e:m_infinite_behavior} and that $m'' = -f(m)$. 
    
    Differentiating the above equation for $m$, we get 
    \begin{equ}
    \label{e:m_prime_equation}
        \Delta m' + f'(m)m' = 0\;.
    \end{equ}
    \eqref{e:m_exponential_decay} for $k=3$ then follows from the exponential decay for $k=1$ and \eqref{e:m_prime_equation}. For general $k\geq 3$, the result follows from differentiating \eqref{e:m_prime_equation} repeatedly and induction. 
\end{proof}

Recall the operators $\aA$ and $\pP$ defined in \eqref{e:op_A} and \eqref{e:projection}. Let $e^{-t \aA} g$ denote the solution $u$ to the linear PDE
\begin{equation*}
    \d_t u = - \aA u
\end{equation*}
with initial data $u[0] = g$. The following statement regarding the exponential decay of $e^{-t\aA}$ outside the image of $\pP$ is essential.

\begin{prop} \label{prop:Linfty_decay}
    There exist $c>0$ and $C>0$ such that for all $p \in [1,+\infty]$, we have
    \begin{equ}
        \|e^{-t\aA} - \pP\|_{\lL^p \to \lL^p} \leq C e^{-c t} \;.
    \end{equ} 
\end{prop}
\begin{proof}
    The case for $p=2$ is well-known; see for example \cite[Lemma~3.1]{Fun95}. For $p=+\infty$, we have
    \begin{equation} \label{e:Linfty_decay_BCS}
        \|(e^{-t \aA} - \pP) \phi\|_{\lL^\infty} \lesssim e^{-ct} \, \|(\id - \pP) \phi\|_{\lL^\infty} \lesssim e^{-ct} \, \|\phi\|_{\lL^\infty}\;.
    \end{equation}
    The first bound is the content of \cite[Section~2, Eq.(23)]{BCS20}, and the second inequality follows from the definition of $\pP$ in \eqref{e:projection} and properties of $m'$ in Lemma~\ref{lem:statationary_exponential_decay}. By Riesz-Thorin interpolation theorem, we get bound for all $p\geq 2$. A duality argument proves the case for $1\leq p \leq 2$. 

    For the sake of completeness, we briefly sketch the argument in \cite[Section~2]{BCS20} for the first bound in \eqref{e:Linfty_decay_BCS}. Let
    \begin{equation*}
        \lL := \frac{\md^2}{\md x^2} - \uU' \, \frac{\md}{\md x}\;, \qquad \uU = - 2 \log (m')\;.
    \end{equation*}
    Then we have $-\aA \phi = m' \lL \big( \frac{\phi}{m'} \big)$. The diffusion process generated by $\lL$ has the unique invariant measure
    \begin{equation*}
        \rho (\md x) = \frac{1}{\|m'\|_{\lL^2}^2} \, e^{-\uU} \, \md x = \frac{1}{\|m'\|_{\lL^2}^2} \cdot (m')^2 \, \md x\;.
    \end{equation*}
    By Lemma~\ref{lem:statationary_exponential_decay}, there exists $c>0$ such that the function $V = e^{\frac{\uU}{2}} = \frac{1}{m'}$ satisfies $V \geq c_1 > 0$, $V(x) \rightarrow +\infty$ as $|x| \rightarrow +\infty$, and
    \begin{equation*}
        \lL V \leq - c_2 V + K
    \end{equation*}
    for some $c_2, K>0$. It is also straightforward to verify the ``minorization" property (see \cite[Assumption~2]{Hairer-Mattingly}). Hence, by \cite[Theorem~1.2]{Hairer-Mattingly}, there exists $c>0$ such that
    \begin{equation*}
        \|e^{t \lL} \phi - \bracket{\rho, \phi}\|_{V,\infty} \lesssim e^{-ct} \|\phi - \bracket{\rho, \phi}\|_{V,\infty}\;,
    \end{equation*}
    where $\|\phi\|_{V,\infty} := \|\frac{\phi}{V}\|_{\lL^\infty} = \|m' \phi\|_{\lL^\infty}$. Since
    \begin{equation*}
        e^{-t \aA} \phi = m' \cdot e^{t \lL} \Big( \frac{\phi}{m'} \Big)\;, \qquad \pP \phi = \Big\langle \rho, \, \frac{\phi}{m'} \Big\rangle \cdot m'\;,
    \end{equation*}
    we have
    \begin{equation*}
        \begin{split}
        \|(e^{-t \aA} - \pP) \phi\|_{\lL^\infty} &= \Big\| e^{t \lL} \Big( \frac{\phi}{m'} \Big) - \Big\langle \rho, \, \frac{\phi}{m'} \Big\rangle \Big\|_{V, \infty}\\
        &\lesssim e^{-ct} \Big\| \frac{\phi}{m'} - \Big\langle \rho, \, \frac{\phi}{m'} \Big\rangle \Big\|_{V, \infty} = e^{-ct} \|(\id - \pP) \phi\|_{\lL^\infty}\;.
        \end{split}
    \end{equation*}
    This completes the proof of the first bound in \eqref{e:Linfty_decay_BCS}. 
\end{proof}

The following functional $\eta$ was introduced in \cite{Fun95} as the Fermi coordinate of $v$, and in \cite{BDMP95, BBDMP98} as linear center. It generalizes the concept of $\lL^2$ projection onto $\mM$. This functional plays an important role in the existence of the limiting functional $\zeta$ (Proposition~\ref{pr:LinftyEC}) and its differentiability properties.

\begin{prop} \label{prop:fermi} 
    There exists $\beta>0$ such that for every $v \in \vV_\beta$, there exists a unique $\eta = \eta(v) \in \RR$ such that
    \begin{equation} \label{e:fermi_defn}
        \int_\RR v(x) m_\eta'(x) \,\md x = 0\;.
    \end{equation}
    Furthermore, there exists $C>0$ such that if $v \in \vV_\beta$ and $\theta \in \RR$ are such that $\|v - m_\theta\|_{\lL^\infty} < \beta$, 
    then we have the bound
    \begin{equation*}
        |\eta(v) - \theta| \leq C \|v - m_\theta\|_{\lL^\infty}\;.
    \end{equation*}
\end{prop}
\begin{proof}
    Postponed to Appendix~\ref{sec:fermi}. 
\end{proof}

We are now ready to introduce the limiting functional $\zeta$ for the deterministic flow $F^t$. An important feature of $F^t$ is that if the initial data $v$ is sufficiently close to the stable manifold $\mM$, then there is a unique $\zeta(v) \in \RR$ such that $F^t(v)$ converges to $m_{\zeta(v)} \in \mM$ in certain sense as $t \rightarrow +\infty$. A precise version we need is the following proposition. 

\begin{prop} \label{pr:LinftyEC}
    There exist $\beta>0$, $c>0$ and $C_{\ex} > 1$ such that for all $v\in\vV_{\beta}$, there exists a $\zeta(v)\in\RR$ such that
    \begin{equation} \label{e:deterministic_flow_convergence}
        \Vert F^t(v) - m_{\zeta(v)}\Vert_{\lL^\infty} \leq C_{\ex} e^{-c t}\dist(v,\mM)\;.
    \end{equation}
    Moreover, we have the bound
    \begin{equation*}
        |\zeta(v)-\eta(v)|\lesssim \dist(v,\mM)
    \end{equation*}
    uniformly over all $v \in \vV_\beta$. 
\end{prop}
\begin{proof}
    We first consider the convergence \eqref{e:deterministic_flow_convergence}. A version of $\lL^2$ convergence for $v$ in an $\lL^2$ neighborhood of $\mM$ is proven in \cite[Theorem~7.1]{Fun95}. For the above $\lL^\infty$ version, the new ingredients are the exponential decay of $\|e^{-t\aA}-\pP\|_{\lL^\infty \rightarrow \lL^\infty}$ in Proposition~\ref{prop:Linfty_decay} and the properties of $\eta$ in Proposition~\ref{prop:fermi}. Based on these two ingredients, the proof follows in the same way as \cite[Theorem~5]{BBDMP98}, so we omit the details. 

    As for the difference $|\eta(v) - \zeta(v)|$, we first note that by \eqref{e:deterministic_flow_convergence}, we have
    \begin{equation*}
        \|v - m_{\zeta(v)}\| \leq C_{\ex} \beta\;.
    \end{equation*}
    Let $\beta$ be sufficiently small so that $C_{\ex} \beta$ satisfies the assumption for ``$\beta$" in Proposition~\ref{prop:fermi}. The conclusion then follows.
\end{proof}

Proposition~\ref{prop:Linfty_decay} gives a good description for the first-order kernel $p_{t}(y,z;m)$ for large $t$. For small $t$ we have the following lemma from \cite{Fun95}, which we omit the proof. 

\begin{lem} \label{lem:Gaussian_small_time}
(\cite[Lemma~9.3]{Fun95})
    For every $T>0$, there exists $C=C_T>0$ such that
    \begin{equ}
        0<p_{t}(y,z;v)\leq Cq_{t}(y-z)
    \end{equ}
    for all $t\in(0,T]$ and $v \in \lL^\infty$. Here $q_t(x) = \frac{1}{\sqrt{4\pi t}}e^{-\frac{x^2}{4t}}$ is the heat kernel. 
\end{lem}

Recall the weighted norm defined in \eqref{e:Llambda}. We have the following $\lL^p_{-\lambda}$ version for Proposition~\ref{prop:Linfty_decay}. It will play a key role in subsequent analysis in this section. 

\begin{prop}
\label{prop:spectral_decom_lambda}
    There exist $\lambda^*,c, C > 0$ such that for every $\lambda\in[0,\lambda^*]$, $t>0$ and $1 \leq p \leq 2 \leq q \leq +\infty$, we have 
    \begin{equation} \label{e:spectral_decom_lambda}
        \|e^{-t\aA} - \pP\|_{\lL^p_{-\lambda}\to\lL^q_{-\lambda}} \leq C t^{-\frac{1}{2p} + \frac{1}{2q}} \, e^{-ct}\;,
    \end{equation}
    and hence
    \begin{equ}
    \label{e:semigroup_bound}
        \|e^{-t\aA}\|_{\lL^p_{-\lambda}\to\lL^q_{-\lambda}} \leq C \big( 1+t^{-\frac{1}{2p} + \frac{1}{2q}} \big)\;.
    \end{equ}
\end{prop}
\begin{proof}
    By the exponential decay of $m'$ from Lemma~\ref{lem:statationary_exponential_decay}, there exists $\lambda^*>0$ such that $\pP$ is a bounded operator from $\lL^p_{-\lambda}$ to $\lL^q_{-\lambda}$, with the operator norm independent of $\lambda \in [0, \lambda^*]$. Hence it suffices to prove \eqref{e:spectral_decom_lambda}. For $t\leq 2$, by the Gaussian bound in Lemma~\ref{lem:Gaussian_small_time} and Young's convolution inequality, we get 
    \begin{equation} \label{e:heat_flow_smoothing}
        \|e^{-t\aA}\phi\|_{\lL^q_{-\lambda}} \leq C\|(q_t e^{\lambda|\cdot|})*(|\phi| e^{-\lambda|\cdot|})\|_{\lL^q} \leq C \, t^{-\frac{1}{2p}+\frac{1}{2q}}\| \, |\phi| \, e^{-\lambda|\cdot|} \, \|_{\lL^p}\;,
    \end{equation}
    We now turn to $t\geq 2$. Note that $e^{-t \aA} \pP = \pP$ and that $e^{-t\aA}$ commutes with $\pP$. Combining this with the semigroup property, we have
    \begin{equation*}
        e^{-t\aA} - \pP = e^{-t\aA}\pP^\perp = e^{-\aA} e^{-(t-2)\aA}\pP^\perp e^{-\aA}\;.
    \end{equation*}
    Now using \cite[Lemma~9.6(iii)]{Fun95} and \eqref{e:semigroup_bound} for $t=1$, we get 
    \begin{equation*}
        \Vert e^{-t\aA} - \pP\Vert_{\lL^p_{-\lambda}\to\lL^q_{-\lambda}} \leq \Vert e^{-\aA} \Vert_{\lL^2_{-\lambda}\to\lL^q_{-\lambda}}\Vert e^{-(t-2)\aA}\pP^\perp\Vert_{\lL^2_{-\lambda}\to\lL^2_{-\lambda}} \Vert e^{-\aA}\Vert_{\lL^p_{-\lambda}\to\lL^2_{-\lambda}}
        \lesssim e^{-ct}\;.
    \end{equation*}
    This completes the proof. 
\end{proof}

\subsection{Linearized PDE}
\label{sec:linear_PDE}

From now on, we fix $c^*>0$ such that Propositions~\ref{prop:Linfty_decay}, ~\ref{pr:LinftyEC} and~\ref{prop:spectral_decom_lambda} hold with $c= c^*$. We also fix $\lambda^*>0$ from Proposition~\ref{prop:spectral_decom_lambda}. The main goal of this subsection is to prove 
Proposition~\ref{prop:linearized_PDE}, which is at the heart of our analysis for $D^k \zeta$. We begin with the following lemma.

\begin{lem}\label{lem:A_equation}
    Fix $p \in [1,2]$, $q \in [2,+\infty]$ and $\lambda \in [0, \lambda^*)$. For every $g \in \lL_{-\lambda}^p$ such that $\pP g = 0$, there exists a unique solution $h\in \lL^q_{-\lambda}$ of the equation $\aA h = g$ such that $\pP h=0$, which we denote by $\aA^{-1} g$. Moreover, we have 
    \begin{equ}
        \Big\| \int_0^t e^{-s\aA}g \,\md s - \aA^{-1} g \, \Big\|_{\lL^q_{-\lambda}} \lesssim e^{-c^* t} \|g\|_{\lL^p_{-\lambda}} \;.
    \end{equ}
\end{lem}
\begin{proof}
    Since $\pP g = 0$, by Proposition~\ref{prop:spectral_decom_lambda}, we have
    \begin{equation*}
        \Vert e^{-t\aA}g\Vert_{\lL^q_{-\lambda}} = \Vert (e^{-t\aA}-\pP)g\Vert_{\lL^q_{-\lambda}} \lesssim t^{-\frac{1}{2p} + \frac{1}{2q}} \,e^{-c^*t} \Vert g\Vert_{\lL^p_{-\lambda}}\;,
    \end{equation*}
    so that $h := \int_0^{+\infty}e^{-s\aA}g \, \md s$ is well-defined in $\lL_{-\lambda}^q$, and satisfies $\pP h = 0$. Furthermore, we have
    \begin{equation*}
        \aA h = \int_0^{+\infty} \aA e^{-s\aA}g \,\md s = \int_0^{+\infty} -\d_s (e^{-s\aA}g) \,\md s=g\;.
    \end{equation*}
    It remains to show uniqueness. Suppose not, then there exists $\tilde{h} \in \lL_{-\lambda}^q$ with $\pP \tilde{h} = 0$ and $\aA \tilde{h} = 0$. Since $\lambda < \lambda^*$, we have $\tilde{h} \in \lL_{-\lambda^*}^2$ and $e^{-t \aA} \tilde{h} = \tilde{h}$ in $\lL_{-\lambda^*}^2$. On the other hand, by the assumption $\pP \tilde{h} = 0$ and Proposition~\ref{prop:spectral_decom_lambda}, we have
    \begin{equation*}
        \|e^{-t \aA} \tilde{h}\|_{\lL_{-\lambda^*}^2} = \|(e^{-t \aA} - \pP) \tilde{h}\|_{\lL_{-\lambda^*}^2} \lesssim e^{- c^* t}\;,
    \end{equation*}
    which implies that $\tilde{h}$ has to be $0$. This completes the proof of the lemma.   
\end{proof}

\begin{prop}
\label{prop:linearized_PDE}
    Fix $p \in [1,2]$ and $\lambda \in [0, \lambda^*)$. Suppose $(g_t)_{t > 0}$ is a family of functions such that for some $\sigma \in (0,1]$ and $M>0$, one has
    \begin{equation} \label{e:ass_gt_uniform}
        \|g_t\|_{\lL_{-\lambda}^p} \leq M (1 + t^{-1+\sigma})
    \end{equation}
    for all $t > 0$. Suppose furthermore there exist $g \in \lL_{-\lambda}^p$ with $\pP g = 0$ and $c_0 >0$ such that
    \begin{equation} \label{e:ass_gt_ec}
        \Vert g_t-g\Vert_{\lL^p_{-\lambda}} \leq Mt^{-1+\sigma} e^{-c_0 t}
    \end{equation}
    for all $t>0$. Moreover, let $(r_t)_{t\geq 0}$ be another family of functions such that for some $c_0', C_0 > 0$, one has
    \begin{equation} \label{e:ass_r_decay}
        \|r_t\|_{\lL^\infty}\leq C_0 e^{-c'_0 t}
    \end{equation}
    for all $t \geq 0$. Let $u$ be the solution to the PDE
    \begin{equ} \label{e:linearized_PDE}
        \d_t u = \Laplace u + \big(f'(m) + r_t\big)u + g_t\;, \qquad 
        u[0] = 0\;.
    \end{equ}
    Then there exists $C>0$ independent of $M$ and $t$ such that the followings hold for all $t>0$:
    \begin{enumerate}
        \item Bound for small and large times: for every $q \in [2, +\infty]$, we have
        \begin{equ}
        \label{e:linearized_equation_solution}
            \Vert u[t]\Vert_{\lL^q_{-\lambda}} \leq CM \big(1 + t^{(\frac{1}{2q}-\frac{1}{2p}+\sigma) \wedge 0} \big)\;.
        \end{equ}
        \item Convergence in long time: let
        \begin{equation*}
            \tilde{g} := \int_{0}^{+\infty} \pP g_s \, \md s + \aA^{-1} g + \int_{0}^{+\infty} \pP \big( r_s u[s] \big) \, \md s\;,
        \end{equation*}
        where $\aA^{-1}g$ is defined in Lemma~\ref{lem:A_equation}. Then $\tilde{g} \in \lL_{-\lambda}^q$ is well-defined with
        \begin{equation} \label{e:g_tilde_bound}
            \|\tilde{g}\|_{\lL_{-\lambda}^q} \leq C \, M\;.
        \end{equation}
        Furthermore, we have
        \begin{equ}
        \label{e:linearized_equation_convergence}
            \Vert u[t] - \tilde{g}\Vert_{\lL^q_{-\lambda}} \leq C \, M \big(1 + t^{(\frac{1}{2q}-\frac{1}{2p}+\sigma) \wedge 0} \big) \; e^{-c t}
        \end{equ}
        for some $c>0$ depending on $c_0$, $c_0'$ and $c^*$ only. 
    \end{enumerate}
\end{prop}

\begin{rmk}
    As mentioned in Section~\ref{sec:deterministic_overview}, we will use this proposition for
    \begin{equation*}
        u[t] = p_t^{(k)}(\vec{y}, \cdot\,;v)
    \end{equation*}
    for $k \geq 2$, $\vec{y} \in \RR^k$ and $v \in \vV_{\beta,0}$. It satisfies \eqref{e:linearized_PDE} for some $g_t$ depending on $\vec{y}$ and $v$, and for
    \begin{equation*}
        r_t = f' \big( F^t(v) \big) - f'(m)\;.
    \end{equation*}
    The quantity $M$ in the assumption of the proposition depends on $\vec{y}$ but will be uniform over $v \in \vV_{\beta,0}$. The constant $C$ is independent of $\vec{y}$ and $v$. 
    
    We only use the above proposition with $p=1$ and $q=+\infty$, but the proof for general $p,q$ does not cost more, so we choose to state the general version. 
\end{rmk}

\begin{proof}[Proof of Proposition~\ref{prop:linearized_PDE}]
    In what follows, the value of the constant $C>0$ may vary from line to line, but they are all independent of $M$ and $t>0$. 

    We first note that the assumptions \eqref{e:ass_gt_uniform} and \eqref{e:ass_gt_ec} imply $\|g\|_{\lL_{-\lambda}^p} \leq 2M$. For $t>0$, define
    \begin{equation} \label{e:defn_g_tilde}
        \tilde{g}_t := \int_{0}^{t/2} \pP g_s \, \md s + \int_{0}^{t/2} e^{-s \aA} g \md s + \int_{0}^{t} \pP \big( r_s u[s] \big) \, \md s\;.
    \end{equation}
    We will show that
    \begin{equation} \label{e:g_tilde_converge}
        \|\tilde{g}_t - \tilde{g}\|_{\lL_{-\lambda}^q} \leq C M e^{-ct}\;,
    \end{equation}
    and
    \begin{equation} \label{e:u_converge_g_tilde}
        \|u[t] - \tilde{g}_t\|_{\lL_{-\lambda}^q} \leq C \, M \big(1 + t^{(\frac{1}{2q}-\frac{1}{2p}+\sigma) \wedge 0} \big) \; e^{-c t}\;.
    \end{equation}
    Since \eqref{e:g_tilde_converge}, \eqref{e:u_converge_g_tilde} and \eqref{e:linearized_equation_solution} directly imply the well-posedness of $\tilde{g}$ in $\lL_{-\lambda}^q$ as well as the bounds \eqref{e:g_tilde_bound} and \eqref{e:linearized_equation_convergence}, it suffices to prove \eqref{e:g_tilde_converge}, \eqref{e:u_converge_g_tilde} and \eqref{e:linearized_equation_solution}. 

    The first two integrals defining $\tilde{g}_t$ in \eqref{e:defn_g_tilde} do not depend on the solution $u$. By the assumption \eqref{e:ass_gt_ec}, $\pP g = 0$ and Lemma~\ref{lem:A_equation}, we have
    \begin{equation} \label{e:g_tilde_converge_1_2}
        \int_{t/2}^{+\infty} \big( \|\pP g_s\|_{\lL_{-\lambda}^q} + \|e^{-s \aA} g\|_{\lL_{-\lambda}^q} \big) \, \md s \leq C M e^{-ct}
    \end{equation}
    for some $c>0$. This in particular implies the first two integrals on the right-hand side of \eqref{e:defn_g_tilde} are bounded by $C \, M$ in $\lL_{-\lambda}^q$ for all $t \geq 0$. 

    By Duhamel's principle for \eqref{e:linearized_PDE}, we have
    \begin{equation} \label{e:u_duhamel}
        u[t] = \int_0^t e^{-(t-s)\aA}g_s \,\md s + \int_0^t e^{-(t-s)\aA} \big(r_s u[s] \big)\,\md s\;.
    \end{equation}
    We want to compare it with $\tilde{g}_t$ defined in \eqref{e:defn_g_tilde}. For the first integral on the right-hand side above, we break it into integrations on $[0, \frac{t}{2}]$ and $[\frac{t}{2}, t]$, and compare them with the first two terms on the right-hand side of \eqref{e:defn_g_tilde} respectively. More precisely, by Proposition~\ref{prop:spectral_decom_lambda} and the assumption \eqref{e:ass_gt_uniform}, we have the bounds
    \begin{equation} \label{e:u_converge_11}
        \int_{0}^{t/2} \| ( e^{-(t-s) \aA} - \pP ) g_s \|_{\lL_{-\lambda}^q} \md s \leq C \, M \, t^{\frac{1}{2q} - \frac{1}{2p} + \sigma} e^{-ct}\;,
    \end{equation}
    and
    \begin{equation} \label{e:u_converge_12}
        \int_{t/2}^{t} \| e^{-(t-s) \aA} (g_s - g) \|_{\lL_{-\lambda}^q} \md s \leq C \, M \, t^{\frac{1}{2q}-\frac{1}{2p}+\sigma} e^{-ct}\;.  
    \end{equation}
    The bounds \eqref{e:u_converge_11} and \eqref{e:u_converge_12} together with \eqref{e:g_tilde_converge_1_2} in particular imply
    \begin{equation} \label{e:u_uniform_1}
        \int_{0}^{t} \| e^{-(t-s) \aA} g_s \|_{\lL_{-\lambda}^q} \md s \leq C \, M \big( 1 + t^{(\frac{1}{2q} - \frac{1}{2p} + \sigma) \wedge 0} \big)
    \end{equation}
    for all $t>0$. 

    We now turn to the second term on the right-hand side of the Duhamel formula \eqref{e:u_duhamel}. We first consider the special case $q=2$. By \eqref{e:u_uniform_1} and Proposition~\ref{prop:spectral_decom_lambda} applied to $\|e^{-(t-s) \aA}\|_{\lL_{-\lambda}^2 \rightarrow \lL_{-\lambda}^2}$, we get
    \begin{equation*}
        \Vert u[t] \Vert_{\lL^2_{-\lambda}} \leq C \, M \, \big(1 + t^{(\frac{1}{4}-\frac{1}{2p}+\sigma) \wedge 0} \big) + C \int_0^t \Vert r_s \Vert _{\lL^\infty}\Vert u[s]\Vert_{\lL^2_{-\lambda}} \,\md s\;.
    \end{equation*}
    Then, with the assumption \eqref{e:ass_r_decay} on $r_s$ and using Gr\"onwall's inequality, we get
    \begin{equation} \label{e:linearized_solution_gronwall}
        \| u[t] \|_{\lL_{-\lambda}^2} \leq C \, M \, \big(1 + t^{(\frac{1}{4}-\frac{1}{2p}+\sigma) \wedge 0} \big)\;.
    \end{equation}
    This bound, together with $\|\pP\|_{\lL_{-\lambda}^2 \rightarrow \lL_{-\lambda}^q} \lesssim 1$, immediately imply (for $q \in [2,+\infty]$)
    \begin{equation} \label{e:g_tilde_converge_3}
        \int_{t}^{+\infty} \left\| \pP \big( r_s u[s] \big) \right\|_{\lL_{-\lambda}^q} \, \md s \leq C \int_{t}^{+\infty} \|r_s\|_{\lL^\infty} \|u[s]\|_{\lL_{-\lambda}^2} \md s \leq C M e^{-ct}\;.
    \end{equation}
    Combining \eqref{e:g_tilde_converge_1_2}, \eqref{e:g_tilde_converge_3} and Lemma~\ref{lem:A_equation}, we deduce the exponential convergence \eqref{e:g_tilde_converge}.
    
    We now consider the $\lL_{-\lambda}^q$ norm of $u[t]$ for general $q \geq 2$. By \eqref{e:u_uniform_1}, we have
    \begin{equation*}
        \Vert u[t] \Vert_{\lL^q_{-\lambda}} \leq C \, M \big(1 + t^{(\frac{1}{2q}-\frac{1}{2p}+\sigma) \wedge 0} \big) + \int_0^t \Vert e^{-(t-s)\aA}\Vert_{\lL^2_{-\lambda}\to\lL^q_{-\lambda}} \Vert r_s u[s]\Vert_{\lL^2_{-\lambda}} \,\md s\;.
    \end{equation*}
    Applying Proposition~\ref{prop:spectral_decom_lambda}, \eqref{e:ass_r_decay} and \eqref{e:linearized_solution_gronwall} to the second integral on the right-hand side above, we then deduce the bound \eqref{e:linearized_equation_solution} for $\| u[t] \|_{\lL_{-\lambda}^q}$. 

    Finally, a similar argument (with Proposition~\ref{prop:spectral_decom_lambda} applied to $e^{-(t-s) \aA} - \pP$ this time) gives
    \begin{equation} \label{e:u_converge_2}
        \int_{0}^{t} \left\| \big( e^{-(t-s) \aA} - \pP \big) \big( r_s u[s] \big) \right\|_{\lL_{-\lambda}^q} \md s \leq C \, M e^{-ct}\;.
    \end{equation}
    We then deduce \eqref{e:u_converge_g_tilde} by combining \eqref{e:u_converge_11}, \eqref{e:u_converge_12}, \eqref{e:u_converge_2}. This completes the proof of the theorem. 
\end{proof}

\subsection{Kernel decompositions}\label{sec:kernel}

In this subsection, we prove a systematic decomposition as well as quantitative bounds for $p_t^{(k)}$. Before giving precise descriptions of the equations for $p_t^{(k)}$, we first introduce some notations for partitions and blocks. For a finite set $S$, we use $|S|$ to denote the cardinality of $S$, and $\emP(S)$ to denote the set of all partitions of $S$. Two typical sets $S$ we will encounter below are $S$ being a subset of $[n]$, or $S \in \emP([n])$ for some $n$. Elements in a partition $P \in \emP(S)$ are called blocks. 

For example, if $n=4$ and $S = \big\{ \{1,2\}, \{3\}, \{4\} \big\} \in \emP([4])$, then $|S|=3$ and $\emP(S)$ includes the five partitions of the three-element set $S$. The partition
\begin{equation*}
    P = \Big\{ \{ \{1,2\}, \{3\} \}, \; \{ \{4\} \} \Big\} \in \emP(S)
\end{equation*}
has cardinality $|P|=2$. It has two blocks with cardinalities $2$ and $1$ respectively. 

We say a partition $P'$ is \textit{finer} than $P$, denoted by $P' \leq P$, if for every block $B \in P$, there exist $k \geq 1$ and blocks $B_{1}', \dots B_{k}' \in P'$ such that 
\begin{equation*}
    B = \bigcup_{j=1}^{k} B_j'\;.
\end{equation*}
Define $p_t^{(k)}$ for $k \geq 1$ recursively as follows. For $k=1$, recall $p_t = p_t^{(1)}$ from \eqref{e:p_t}. For $k \geq 2$, $\vec{y} \in \RR^k$ and $v\in\lL^\infty$, let $p_t^{(k)}(\vec{y}, \cdot\,; v)$ be the solution $u^{(k)}$ to the equation
\begin{equation*} 
    \d_t u^{(k)} = \Delta u^{(k)} + f' \big( F^t(v) \big) u^{(k)} + g_t^{(k)}(\vec{y}; v)\;, \qquad u^{(k)}[0] = 0\;,
\end{equation*}
where
\begin{equation} \label{e:g_t-k_defn}
    g_t^{(k)}(\vec{y}, \cdot\,; v) = \sum_{P\in\emP([k]),|P|\neq 1}f^{(|P|)}\big(F^t(v)\big)\Big(\prod_{B\in P}p_t^{(|B|)}(\vec{y}_B,\cdot\,;v)\Big)\;.
\end{equation}
Since the sum is taken over partitions $P$ with $|P| \geq 2$, all blocks $B$ appearing in the product above satisfy $|B| \leq k-1$, and hence the right-hand side above involves $p_t^{(j)}$ for $j \leq k-1$ only. 

For large time, we study the operator $\Delta + f' \big( F^t(v) \big)$ as a perturbation of $\Delta + f'(m_{\zeta(v)})$. It is then natural to introduce
\begin{equation} \label{e:rtv}
    \r_t(v):= f'\big(F^t(v)\big) - f'(m_{\zeta(v)})\;.
\end{equation}
Since $f\in \cC_b^\infty$, by mean value theorem and the exponential convergence \eqref{e:deterministic_flow_convergence}, we deduce
\begin{equation} \label{e:r_decay}
    \Vert \r_t(v) \Vert_{\lL^\infty}\lesssim e^{-c^*t}
\end{equation}
uniformly over $v \in \vV_\beta$ and $t \geq 0$. For $v \in \vV_{\beta,0}$, $k \geq 2$ and $\vec{y} \in \RR^k$, the above equation for $u^{(k)}$ has the same form as \eqref{e:linearized_PDE} with $g_t = g_t^{(k)}(\vec{y}; v)$ and $r_t = \r_t(v)$, satisfying the decay assumption \eqref{e:ass_r_decay} uniformly over $v \in \vV_{\beta,0}$. 

We now start to describe our decomposition and bounds for $p_t^{(k)}$. The main results are Lemma~\ref{lem:decomposition1} for $p_t = p_t^{(1)}$ and Proposition~\ref{pr:decomposition} for general $k \geq 2$. We prove them by induction, starting with $k=1$. 

\begin{lem}\label{lem:bound1}
    Recall $\lambda^* > 0$ from Proposition~\ref{prop:spectral_decom_lambda}. There exists $\overline{c}>0$ such that 
    \begin{equation*}
        \Vert p_t(y,\cdot\,;v)\Vert_{\lL^q_{-\lambda}} \lesssim (e^{\overline{c} \lambda t} + t^{-\frac{1}{2}+\frac{1}{2q}})e^{-\lambda |y|}
    \end{equation*}
    for all $t> 0$, $\lambda \in [0,\lambda^*]$ and $q \in [2,+\infty]$.
\end{lem}
\begin{proof}
    For $t\leq 2$, the assertion follows directly from the Gaussian bound in Lemma~\ref{lem:Gaussian_small_time}. For $t\geq 2$, we use the semigroup property of $p$ to write
    \begin{equation*}
        p_{t}(y,z;v) = \int_\RR p_1 \big( z_2, z; F^{t-1}(v) \big) \bigg( \int_\RR p_1 \big( y, z_1; v \big) \, p_{t-2} \big( z_1, z_2; F^1(v) \big) \, \md z_1 \bigg) \, \md z_2\;.
    \end{equation*}
    By Lemma~\ref{lem:Gaussian_small_time} and the estimate \eqref{e:heat_flow_smoothing}, we have
    \begin{equation*}
        \| p_1\big(z_2, z; F^{t-1}(v)\big) \|_{\lL_{-\lambda}^2(z_2) \rightarrow \lL_{-\lambda}^q(z)} \lesssim 1\;, \quad \|p_1(y,z_1;v)\|_{\lL^2_{-\lambda}(z_1)} \lesssim e^{-\lambda |y|}\;.
    \end{equation*}
    Also, by \cite[Lemma~9.6(ii)]{Fun95}, we have
    \begin{equation*}
        \| p_{t-2}\big(z_1, z_2; F^{1}(v)\big) \|_{\lL_{-\lambda}^2(z_1) \rightarrow \lL_{-\lambda}^2(z_2)} \lesssim e^{\overline{c} \lambda t}\;.
    \end{equation*}
    The bound in \cite[Lemma~9.6(ii)]{Fun95} was proved for $v$ in $\lL^2$ neighborhood of $\mM$ instead of $\lL^\infty$ neighborhood, but the proof works for the $\lL^\infty$ setting in our situation in the same way. Hence, combining the above gives
    \begin{equation*}
        \|p_t(y, z; v)\|_{\lL_{-\lambda}^q(z)} \lesssim e^{\overline{c} \lambda t} e^{-\lambda |y|}
    \end{equation*}
    for $t \geq 2$. This completes the proof. 
\end{proof}

We use Lemma~\ref{lem:bound1} as an intermediate step to prove the following strictly stronger version.

\begin{lem}\label{lem:decomposition1}
Recall $\lambda^*>0$ from Proposition~\ref{prop:spectral_decom_lambda} and $c^*>0$ from the beginning of Section~\ref{sec:linear_PDE}. There exist $c_1\in(0,c^*]$ and $\overline{\lambda}\in(0,\lambda^*)$ such that for every $v\in \vV_{\beta,0}$, the kernel $p_t(y,z;v)$ can be decomposed as
    \begin{equation}\label{e:decomposition1}
        p_t(y,z;v) = \Lambda^{(1)}_v(y) \, m'(z) + \hat{p}_{t}(y,z;v)\;,
    \end{equation}
    where $\Lambda_v^{(1)}$ satisfies the bound
    \begin{equation}\label{e:decomposition1-1}
        \vert \Lambda^{(1)}_v(y)\vert \lesssim e^{- \frac{1}{2} \overline{\lambda} |y|}\;,
    \end{equation}
    and for every $\lambda \in (0, \overline{\lambda}]$, $\hat{p}_{t}$ satisfies the bounds
    \begin{equation} \label{e:decomposition1-2}
        \Vert \hat{p}_{t}(y,\cdot\,;v)\Vert_{\lL^\infty_{-\lambda}} \lesssim_\lambda t^{-1/2}e^{-c_1 t}e^{-\frac{1}{2}\lambda |y|}\;,
    \end{equation}
    \begin{equation} \label{e:decomposition1-3}
        \Vert \hat{p}_{t}(y,\cdot\,;v)\Vert_{\lL^2_{-\lambda}} \lesssim_\lambda t^{-1/4}e^{-c_1 t}e^{-\frac{1}{2}\lambda |y|}
    \end{equation}
    uniformly over all $t>0$ and $v\in\vV_{\beta,0}$. As a consequence, for every $\lambda\in(0,\overline{\lambda}]$, we have
    \begin{equation}\label{e:bound1-2}
        \Vert p_t(y,\cdot\,;v)\Vert_{\lL^2_{-\lambda}}\lesssim (1+t^{-1/4})e^{-\frac{1}{2}\lambda |y|}
    \end{equation}
    uniformly over all $t>0$ and $v\in\vV_{\beta,0}$. Moreover, $\Lambda_v^{(1)}$ has the explicit expression
    \begin{equation}\label{e:lambda_v-1_expression}
        \Lambda^{(1)}_v(y)=\frac{1}{\|m'\|_{\lL^2}^2}\Big(m'(y)+\int_0^{+\infty}\int_{\RR} m'(z)\r_t(z;v)p_t(y,z;v) \,\md z \,\md t\Big)\;,
    \end{equation}
    where we recall from \eqref{e:rtv} that $\r_t(v) = f' \big( F^t(v) \big) - f'(m)$ for $v \in \vV_{\beta,0}$. 
\end{lem}
\begin{proof}
    We first deal with the case $v=m$. Define
    \begin{equation*}
        \Lambda_m^{(1)}(y) := \frac{1}{\|m'\|_{\lL^2}^2} \bracket{m', p_t(y,\cdot \,; m)} = \frac{m'(y)}{\|m'\|_{\lL^2}^2}\;,
    \end{equation*}
    which is independent of $t$. Here we use the fact that $m'$ satisfies \eqref{e:m_prime_equation}, so it is invariant under the flow $e^{-t\aA}$. By Lemma~\ref{lem:statationary_exponential_decay}, we have
    \begin{equation*}
        |\Lambda_m^{(1)}(y)| \lesssim e^{-\overline{\lambda} |y|}\;.
    \end{equation*}
    We now turn to the bounds \eqref{e:decomposition1-2} and \eqref{e:decomposition1-3} for
    \begin{equ}
        \hat{p}_{t}(y,\cdot\,;m) = p_t(y,\cdot \,;m) - \Lambda_m^{(1)}(y) m'\;,
    \end{equ}
    where we can prove better bounds at $v=m$ than \eqref{e:decomposition1-2} and \eqref{e:decomposition1-3} for general $v \in \vV_{\beta,0}$. More precisely, we will show that
    \begin{equation} \label{e:bound-1-m-remainder}
        \begin{split}
        \|\hat{p}_{t}(y,\cdot\,; m)\|_{\lL_{-\lambda}^\infty} &\lesssim \big( 1 + t^{-1/2} \big) e^{- c^* t} e^{-\lambda |y|}\;,\\
        \|\hat{p}_{t}(y,\cdot\,; m)\|_{\lL_{-\lambda}^2} &\lesssim \big( 1 + t^{-1/4} \big) e^{- c^* t} e^{-\lambda |y|}\;.
        \end{split}
    \end{equation}
    For $t \leq 1$, the bounds in \eqref{e:bound-1-m-remainder} follow from Lemma~\ref{lem:bound1} and the exponential decay of $m'$. For $t \geq 1$, by definition of $\Lambda_m^{(1)}$, we can write
    \begin{equation*}
        \hat{p}_{t}(y,\cdot\,;m) = \pP^{\perp} e^{-(t-1) \aA} p_{1}(y,\cdot\,;m) = (e^{-(t-1) \aA} - \pP)p_{1}(y,\cdot\,;m)\;,
    \end{equation*}
    and the desired bounds follow from Proposition~\ref{prop:spectral_decom_lambda} and bounds for $p_{1}$.
    
    For general $v \in \vV_{\beta,0}$, by Duhamel's principle for \eqref{e:p_t}, we have
    \begin{equation*}
        p_t(y,z;v) = p_t(y,z;m) + \int_0^t\int_{\RR} p_{t-s}(z',z;m) \r_s(z';v) p_s(y,z';v) \,\md z' \,\md s\;,
    \end{equation*}
    where we used the fact that $p$ is translation invariant in time at $v=m$. Then using the decomposition for $p_t(y,z;m)$ established above, we get the decomposition \eqref{e:decomposition1} for $p_t(y,z;v)$ with
    \begin{equation*}
        \Lambda^{(1)}_v(y):=\Lambda^{(1)}_m(y)+\int_0^{+\infty}\int_{\RR} \Lambda^{(1)}_m(z')\r_s(z';v)p_s(y,z';v) \,\md z' \,\md s\;,
    \end{equation*}
    and
    \begin{equation} \label{e:p_t_hat}
    \begin{aligned}
        \hat{p}_{t}(y,z;v):=&\hat{p}_{t}(y,z;m)+ \int_0^t\int_{\RR} \hat{p}_{t-s}(z',z;m)\r_s(z';v)p_s(y,z';v) \,\md z' \,\md s \\&-\Big(\int_t^\infty\int_{\RR} \Lambda^{(1)}_m(z')\r_s(z';v)p_s(y,z';v) \,\md z' \,\md s\Big)m'(z)\;.
    \end{aligned}
    \end{equation}
    We now prove the bound \eqref{e:decomposition1-1} for $\Lambda_v^{(1)}$ and \eqref{e:decomposition1-2} and \eqref{e:decomposition1-3} for $\hat{p}_t(v)$. The two terms with $v=m$ on the right-hand sides above have already been treated, so it remains to control the other three terms with space-time integrations. 

    For every $\lambda > 0$, by H\"older inequality and Lemma~\ref{lem:bound1}, we have
    \begin{equation} \label{e:bound-1-ps}
        \|p_s(y,\cdot\,;v)\|_{\lL^{1}_{-\lambda}} \lesssim_{\lambda} \|p_s(y,\cdot\,;v)\|_{\lL^{2}_{-\lambda/2}} \lesssim \big(e^{\frac{\overline{c} \lambda s}{2}} + s^{-\frac{1}{4}} \big) \, e^{-\frac{\lambda}{2} |y|} \;.
    \end{equation}
    This enables us to control the $\lL_{-\lambda}^2$ and $\lL_{-\lambda}^\infty$ norms of the three space-time integrals above. For example, using Minkowski inequality, we can control the $\lL_{-\lambda}^q(z)$ norm (for $q=2$ or $+\infty$) of the second term on the right-hand side of \eqref{e:p_t_hat} by
    \begin{equation*}
        \begin{split}
        &\phantom{111}\int_{0}^{t} \int_\RR |p_s(y,z';v)| \cdot |\r_s(z';v)| \cdot \|\hat{p}_{t-s}(z', \cdot\,; m)\|_{\lL_{-\lambda}^q} \, \md z' \, \md s\\
        &\lesssim \int_{0}^{t} \big( 1 + (t-s)^{-\frac{1}{2}} \big) e^{-c^* (t-s)} e^{-c^* s} \bigg( \int_\RR e^{-\lambda |z'|} |p_s (y, z'; m)| \, \md z' \bigg) \, \md s\\
        &\lesssim e^{-c^* t} e^{-\frac{\lambda}{2} |y|} \int_{0}^{t} \big( 1 + (t-s)^{-\frac{1}{2}} \big) \cdot \big(e^{\frac{\overline{c} \lambda s}{2}} + s^{-\frac{1}{4}} \big) \, \md s\;,
        \end{split}
    \end{equation*}
    where we used the decay of $\|\r_s\|_{\lL^\infty}$ and the bound \eqref{e:bound-1-m-remainder} in the first inequality above, and \eqref{e:bound-1-ps} in the second inequality. Choose $\overline{\lambda}$ such that $\overline{c} \overline{\lambda} < c^*$. Then the above quantity can be controlled by $e^{-c_1 t} e^{- \frac{\lambda}{2} |y|}$ for some $c_1>0$. 
    
    Bounds for the other terms follow from similar but simpler calculations. This completes the proof. 
\end{proof}

\begin{rmk} \label{rmk:translation}
    The above lemma extends to $v \in \vV_\beta$ as follows. For $v \in \vV_\beta$, let
    \begin{equation*}
        \widetilde{v} = \emS_{-\zeta(v)} v \in \vV_{\beta,0}\;.
    \end{equation*}
    Since $\emS_\theta F^t(v) = F^t(\emS_\theta v)$, we have
    \begin{equation*}
        p_t(y,z;v) = p_t \big(y- \zeta(v), z - \zeta(v); \, \widetilde{v} \big)\;.
    \end{equation*}
    Then all the statements in Lemma~\ref{lem:decomposition1} holds with $m'$ replaced by $m_{\zeta(v)}'$, $\lL_{-\lambda}^q$-norm replaced by $\lL_{-\lambda, \zeta(v)}^q$-norm given by $\Vert \phi\Vert_{\lL^q_{-\lambda, \zeta(v)}}:=\Vert \emS_{-\zeta(v)}\phi\Vert_{\lL^q_{-\lambda}}$, and the exponential decay in $|y|$ replaced by $|y-\zeta(v)|$. This also applies to the properties of $p_t^{(k)}(\cdot\,, \cdot\,; v)$ for $k \geq 2$ in Lemmas~\ref{lem:decomposition2} and~\ref{pr:decomposition} below. 
\end{rmk}

With the decomposition for $p_t$ at hand, together with Proposition~\ref{prop:linearized_PDE}, we can prove the decompositions for $p_t^{(k)}$ inductively. Since the case for $k=2$ is still relatively simple, to illustrate the general mechanism in the main proof, we choose to present it here separately from the general induction. 

\begin{lem}\label{lem:decomposition2}
   Let $\overline{\lambda}$ be same as that in Lemma~\ref{lem:decomposition1}. For every $\lambda\in(0,\overline{\lambda}]$, we have
    \begin{equation}\label{e:bound2}
        \Vert p_t^{(2)}(y_1,y_2,\cdot\,;v)\Vert_{\lL^\infty_{-\lambda}} \lesssim e^{-\frac{1}{4}\lambda|y_1|-\frac{1}{4}\lambda|y_2|}
    \end{equation}
    uniformly over all $t>0$ and  $v\in\vV_{\beta,0}$. Moreover, for every $v\in\vV_{\beta,0}$, $p_t^{(2)}(y_1,y_2,z;v)$ can be decomposed as 
    \begin{equation}\label{e:decomposition2}
        p_t^{(2)}(y_1,y_2,z;v) = \Lambda^{(2)}_v(y_1,y_2)m'(z) + \Lambda^{(1)}_v(y_1)\Lambda^{(1)}_v(y_2)m''(z)+\hat{p}_{t}^{(2)}(y_1,y_2,z;v)\;,
    \end{equation}
    where $\Lambda^{(1)}_v$ is defined in Lemma~\ref{lem:decomposition1}, and $\Lambda_v^{(2)}$ is symmetric in its two variables. Furthermore, there exists $c_2> 0$ such that for every $\lambda\in(0,\overline{\lambda}]$, $\Lambda_v^{(2)}$ and $\hat{p}_{t}^{(2)}$ satisfy the bounds
    \begin{equation}\label{e:decomposition2-1}
        \vert \Lambda^{(2)}_v(y_1,y_2)\vert \lesssim e^{-\frac{1}{4}\overline{\lambda} |y_1|-\frac{1}{4}\overline{\lambda}|y_2|}\;,
    \end{equation}
    \begin{equation}\label{e:decomposition2-2}
        \Vert \hat{p}_{t}^{(2)}(y_1,y_2,\cdot\,;v)\Vert_{\lL^\infty_{-\lambda}}\lesssim e^{-c_2t}e^{-\frac{1}{4}\lambda |y_1|-\frac{1}{4}\lambda |y_2|}
    \end{equation}
    uniformly over all $t>0$ and $v\in\vV_{\beta,0}$. Finally, $\Lambda_v^{(2)}$ has the explicit expression
    \begin{equation} \label{e:lambda_v-2_expression}
        \begin{split}
        \Lambda_v^{(2)}(y_1,y_2) = \frac{1}{\|m'\|_{\lL^2}^2} \int_{0}^{+\infty} \int_\RR\Big( & f'' \big(F^t(v) \big)(z) \prod_{j=1}^{2} p_t(y_j, z; v) \\
        &+  \r_t(z;v) \, p_t^{(2)}(y_1, y_2, z; v)\Big)m'(z)\,\md z\,\md t\;.
        \end{split}
    \end{equation}
\end{lem}
\begin{proof}
    The kernel $p_t^{(2)}(y_1, y_2, \cdot\,; v)$ satisfies \eqref{e:linearized_PDE} with $r_t = \r_t(v)$ and
    \begin{equation*}
        g_t = g_t^{(2)}(y_1, y_2, \cdot \,; \, v) = f''\big(F^t(v)\big) \, p_t(y_1,\cdot\,;v) \, p_t(y_2,\cdot\,;v)\;.
    \end{equation*}
    We first check that this $g_t$ satisfies the assumptions of Proposition~\ref{prop:linearized_PDE}. By H\"older inequality, boundedness of $f''$ and \eqref{e:bound1-2}, we have
    \begin{equation*}
        \Vert g_t\Vert_{\lL^1_{-\lambda}}\lesssim \Vert p_t(y_1,\cdot\,;v)\Vert_{\lL^2_{-\lambda/2}}\Vert p_t(y_2,\cdot\,;v)\Vert_{\lL^2_{-\lambda/2}} \lesssim \big( 1+t^{-\frac{1}{2}} \big) \, e^{-\frac{1}{4}\lambda (|y_1| + |y_2|)}\;.
    \end{equation*}
    The candidate function for the limit of $g_t = g_t^{(2)}$ as $t \rightarrow +\infty$ is
    \begin{equation*}
        g = g^{(2)}(y_1, y_2, \cdot \,; \, v) = \Lambda_v^{(1)}(y_1) \, \Lambda_v^{(1)}(y_2) \, f''(m) \, \big( m' \big)^2\;.
    \end{equation*}
    Since $f''(m)$ and $m'$ are both odd, $\pP g = 0$ by symmetry. For the difference $g_t - g$, with the decomposition \eqref{e:decomposition1} for $p_t$, we have
    \begin{equs}
        g_t - g =& f''\big(F^t(v)\big) \left( \hat{p}_t(y_1, \cdot\,; v) \, \Lambda_v^{(1)}(y_2) \, m' + p_t(y_1, \cdot\,; v) \, \hat{p}_t(y_2, \cdot\,; v) \right)\\
        &+ \Lambda^{(1)}_v(y_1) \, \Lambda^{(1)}_v(y_2) \, \left(f''\big(F^t(v)\big)-f''(m)\right) \big(m'\big)^2\;.
    \end{equs}
    Then, using $f \in \cC_b^\infty$ and H\"older inequality, we have
    \begin{equation*}
        \begin{split}
        \|g_t - g\|_{\lL_{-\lambda}^1} \lesssim \, & \|\hat{p}_t(y_1, \cdot)\|_{\lL_{-\lambda}^2} |\Lambda_v^{(1)}(y_2)| \,\|m'\|_{\lL^2} + \|p_t (y_1, \cdot)\|_{\lL_{-\lambda/2}^2} \|\hat{p}_t(y_2, \cdot)\|_{\lL_{-\lambda/2}^2}\\
        &+ |\Lambda_v^{(1)}(y_1)| \, |\Lambda_v^{(1)}(y_2)| \, \|f''\big( F^t(v) \big) - f''(m)\|_{\lL^\infty} \|\big( m' \big)^2\|_{\lL_{-\lambda}^1}\;,
        \end{split}
    \end{equation*}
    where we omitted $v$ in $p_t$ or $\hat{p}_t$ for notational simplicity. Applying Lemma~\ref{lem:statationary_exponential_decay}, Proposition~\ref{pr:LinftyEC} and Lemma~\ref{lem:decomposition1} to various terms on the right-hand side above, we get
    \begin{equation*}
        \|g_t - g\|_{\lL_{-\lambda}^1} \lesssim t^{-\frac{1}{2}} \, e^{-c_1 t} e^{-\frac{\lambda}{4} (|y_1| + |y_2| )}\;.
    \end{equation*}
    Hence, the family $(g_t)_{t \geq 0}$ satisfies the assumptions of Proposition~\ref{prop:linearized_PDE} with $\sigma = \frac{1}{2}$ and
    \begin{equation*}
        M = M(y_1, y_2) = C_0 \, e^{-\frac{\lambda}{4} (|y_1| + |y_2|)}
    \end{equation*}
    for some constant $C_0$ independent of $y_1, y_2$ and $v$. The bound \eqref{e:bound2} follows directly from Assertion 1 in Proposition~\ref{prop:linearized_PDE} with $p=1$ and $q=+\infty$. 
    
    Now we turn to the second part of the lemma, namely the decomposition of $p_t$ and their bounds. Define
    \begin{equation} \label{e:g_tilde-2}
        \tilde{g} = \tilde{g}^{(2)} = \Lambda^{(1)}_v(y_1) \, \Lambda^{(1)}_v(y_2) \, m'' + \int_0^{+\infty} \pP \big( \, g_s + \r_s(v) \, p_s^{(2)}(y_1,y_2,\cdot\,;v) \big) \,\md s\;.
    \end{equation}
    Since $\pP m'' = 0$ and $\Lambda_v^{(1)}(y_1) \Lambda_v^{(2)}(y_2) \aA m'' = g$, this $\tilde{g}$ does correspond to the one in Proposition~\ref{prop:linearized_PDE}. Hence, by Assertion 2 in that proposition, we have
    \begin{equation} \label{e:second_order_g_tilde}
        \Vert \tilde{g}\Vert_{\lL^\infty_{-\lambda}} \lesssim e^{-\frac{\lambda}{4} (|y_1|+|y_2|)}\;, \quad
        \Vert p_t^{(2)}(y_1,y_2,\cdot\,;v) - \tilde{g}\Vert_{\lL^\infty_{-\lambda}} \lesssim e^{-c_2 t} \, e^{-\frac{\lambda}{4} (|y_1| + |y_2|)}
    \end{equation}
    for some $c_2>0$. Define
    \begin{equation*}
        \hat{p}_t^{(2)}(y_1, y_2, z; v) := p_t^{(2)}(y_1, y_2, z; v) - \tilde{g}(z)\;.
    \end{equation*}
    Then \eqref{e:second_order_g_tilde} implies that the bound \eqref{e:decomposition2-2} holds for $\hat{p}_t^{(2)}$. It remains to check $\tilde{g}$ has the same form as the sum of the first two terms on the right-hand side of \eqref{e:decomposition2} and to get corresponding bounds for them. 

    Comparing \eqref{e:g_tilde-2} and the first two terms on the right-hand side of \eqref{e:decomposition2}, since $\pP$ ``projects" a function onto the direction spanned by $m'$, we see the decomposition \eqref{e:decomposition2} indeed holds with $\Lambda_v^{(2)}$ being such that
    \begin{equation*}
        \Lambda_v^{(2)}(y_1, y_2) \, m' = \int_0^{+\infty} \pP \big( \, g_s + \r_s(v) \, p_s^{(2)}(y_1,y_2,\cdot\,;v) \big) \,\md s\;.
    \end{equation*}
    Testing both sides above with $m'$ gives the expression \eqref{e:lambda_v-2_expression}. The symmetry of $\Lambda_v^{(2)}$ in its two variables follows from that of $p_s^{(2)}$ and $g_s$. Also, since
    \begin{equation*}
        \Lambda_v^{(2)}(y_1, y_2) \, m' = \tilde{g} - \Lambda_v^{(1)}(y_1) \, \Lambda_v^{(1)}(y_2) \, m''\;.
    \end{equation*}
    testing both sides above with $m'$ and using the bound \eqref{e:second_order_g_tilde} and that $\bracket{m'', m'} = 0$, we deduce the bound \eqref{e:decomposition2-1}. This completes the proof. 
\end{proof}

Before proceeding to $p_t^{(k)}$ for $k \geq 3$, we first note from \eqref{e:g_t-k_defn} that the forcing term $g_t^{(k)}$ involves $k$ products of $p_t$. A naive bound via \eqref{e:decomposition1-2} and \eqref{e:decomposition1-3} will then give a non-integrable singularity at $t=0$ as soon as $k \geq 3$. The next lemma says one can reduce the singularity in time by trading it with some singularity in $\vec{y}$.

\begin{lem}\label{lem:technical}
Let $\lambda \geq 0$. For every $\sigma \in (0, \frac{1}{2})$ and $k \geq 3$, we have
\begin{equation} \label{e:technical1}
    \int_\RR e^{-\lambda|z|} \prod_{i=1}^k |p_t(y_i,z;v)| \,\md z \lesssim t^{-\frac{1}{2}-\sigma}e^{-\frac{\lambda}{k} \sum|y_i|}\Big(\sum\limits_{i,j\in[k]}(y_i-y_j)^2\Big)^{-\frac{k-2}{2}+\sigma}
\end{equation}
for all $t \in (0,1)$. 
\end{lem}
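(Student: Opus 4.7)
The plan is to reduce the left-hand side to an explicit Gaussian integral. Since $t\leq 1$, the short-time estimate $p_t(y_i,z;v)\lesssim q_t(y_i-z)$ from Lemma~\ref{lem:Fun95_Lemma9.3} applies, so it suffices to bound $\int e^{-\lambda|z|}\prod_{i=1}^k q_t(y_i-z)\,\md z$. Writing $\bar{y}:=k^{-1}\sum_i y_i$ and $D:=\sum_{i,j\in[k]}(y_i-y_j)^2$, the identity $\sum_{i=1}^k(y_i-z)^2 = k(z-\bar{y})^2 + D/(2k)$ factors the product of Gaussians as
\begin{equation*}
    \prod_{i=1}^k q_t(y_i-z) = (4\pi t)^{-k/2}\,e^{-D/(8kt)}\,e^{-k(z-\bar{y})^2/(4t)}\;.
\end{equation*}
For the $z$-integration I would use the triangle inequality $e^{-\lambda|z|}\leq e^{-\lambda|\bar{y}|}\,e^{\lambda|z-\bar{y}|}$, substitute $w=z-\bar{y}$, and complete the square in $\lambda|w|-kw^2/(4t)$; since $\lambda\leq 1$ and $t\leq 1$ this produces a Gaussian integral of size $\lesssim t^{1/2}$, giving
\begin{equation*}
    \int e^{-\lambda|z|}\prod_{i=1}^k q_t(y_i-z)\,\md z\;\lesssim\; t^{-(k-1)/2}\,e^{-D/(8kt)}\,e^{-\lambda|\bar{y}|}\;.
\end{equation*}

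I would then trade part of the $t$-singularity for a power of $D$ via the factorisation
\begin{equation*}
    t^{-(k-1)/2}\,e^{-D/(8kt)} \;=\; t^{-1/2-\sigma}\,D^{-(k-2)/2+\sigma}\cdot\bigl(D/t\bigr)^{(k-2)/2-\sigma}\,e^{-D/(8kt)}\;.
\end{equation*}
The exponent $(k-2)/2-\sigma$ is nonnegative because $k\geq 3$ and $\sigma<\tfrac{1}{2}$, hence $u^{(k-2)/2-\sigma}\,e^{-u/(8k)}$ with $u=D/t$ is uniformly bounded on $[0,+\infty)$, producing precisely the desired prefactor $t^{-1/2-\sigma}\,D^{-(k-2)/2+\sigma}$ on the right-hand side of \eqref{e:technical1}.

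The step I expect to be the most delicate is upgrading the naturally occurring $e^{-\lambda|\bar{y}|}$ to the strictly stronger $e^{-\lambda\sum_i|y_i|/k}$ demanded by the statement (these differ whenever the $y_i$ have different signs); the missing decay must be recovered from the Gaussian prefactor $e^{-D/(8kt)}$. By Cauchy--Schwarz one has $\sum_i|y_i-\bar{y}|\leq\sqrt{k\cdot D/(2k)}=\sqrt{D/2}$, hence $\sum_i|y_i|/k\leq|\bar{y}|+\sqrt{D/2}/k$, so only an extra factor $e^{\lambda\sqrt{D/2}/k}$ must be absorbed. Splitting $e^{-D/(8kt)}=e^{-D/(16kt)}\cdot e^{-D/(16kt)}$, using the first copy to control the power of $D$ as above, and noting that $\lambda\sqrt{D/2}/k-D/(16kt)$ is uniformly bounded from above in $D\geq 0$ for $t\leq 1$ (optimising in $D$ gives a bound of order $t/k$), the remaining factor $\exp\bigl(\lambda\sqrt{D/2}/k-D/(16kt)\bigr)$ is harmless. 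Combining these observations yields \eqref{e:technical1}.
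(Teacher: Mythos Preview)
Your proof is correct and follows essentially the same route as the paper: Gaussian bound, factorisation of the product of heat kernels through the barycentre $\bar y$, trading the excess power of $t^{-1}$ for a power of $D$ via $u^{(k-2)/2-\sigma}e^{-cu}\lesssim 1$, and finally upgrading $e^{-\lambda|\bar y|}$ to $e^{-\lambda\sum_i|y_i|/k}$ by absorbing the discrepancy into a reserved portion of the Gaussian factor. The only cosmetic differences are that the paper uses the cruder pointwise bound $|y_i-\bar y|\le\sup_j|y_i-y_j|\le\sqrt{D/2}$ (losing a factor $k$ compared to your Cauchy--Schwarz estimate $\sum_i|y_i-\bar y|\le\sqrt{D/2}$) and handles the absorption via the AM--GM inequality $\sqrt{D/2}\le \tfrac{D}{8\lambda k t}+2\lambda k t$ rather than an explicit optimisation in $D$; neither difference is material.
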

\begin{proof}
Denote $\overline{y} = \frac{\sum y_i}{k}$. Since $t \in (0,1)$, the Gaussian bound in Lemma~\ref{lem:Gaussian_small_time} gives
\begin{equation*}
    \prod_{i=1}^{k} |p_t (y_i, z; v)| \lesssim t^{-\frac{k}{2}} e^{-\frac{1}{4t} \sum (z-y_i)^2} = t^{-\frac{k}{2}} e^{-\frac{\sum\limits_{1\leq i<j\leq k}(y_i-y_j)^2}{4kt}} e^{-\frac{k}{4t} (z - \overline{y})^2}\;.
\end{equation*}
Multiplying both sides by $e^{-\lambda |z|}$ and integrating over $z \in \RR$ with a change of variable $z \mapsto z + \overline{y}$, we get
\begin{equation*}
    \int_\RR e^{-\lambda|z|} \prod_{i=1}^k |p_t(y_i,z;v)| \,\md z \lesssim t^{-\frac{k-1}{2}} \, e^{-\frac{\sum\limits_{1\leq i<j\leq k}(y_i-y_j)^2}{4kt}} \, e^{-\lambda|\overline{y}|}
\end{equation*}
for all $t\in(0,1)$. We claim that
\begin{equation*}
    |\overline{y}|\geq \frac{1}{k}\sum|y_i|-\sqrt{\sum\limits_{1\leq i<j\leq k}(y_i-y_j)^2}\geq\frac{1}{k}\sum|y_i|-\frac{\sum\limits_{1\leq i<j\leq k }(y_i-y_j)^2}{8\lambda kt} - 2\lambda kt\;.
\end{equation*}
The second inequality is trivial. The first one follows from the fact that
\begin{equ}
    |y_i|-|y|\leq |y_i - \Bar{y}| \leq \sup_{1\leq j\leq k}|y_i-y_j| \leq \sqrt{\sum_{1\leq \ell<j\leq k}|y_\ell-y_j|^2}\;.
\end{equ}
With this claim, we can get
\begin{equation*}
    \int_\RR e^{-\lambda|z|} \prod_{i=1}^k |p_t(y_i,z;v)| \,\md z \lesssim t^{-\frac{k-1}{2}}e^{2\lambda^2 kt}e^{-\frac{\sum\limits_{1\leq i<j\leq k}(y_i-y_j)^2}{8  k t}}e^{-\frac{\lambda}{k}\sum|y_i|}\;.
\end{equation*}
Using $t \leq 1$ and $e^{-x} \lesssim x^{-\frac{k-2}{2}+\sigma}$ for $x > 0$, we get the desired bound \eqref{e:technical1}. 
\end{proof}

For $\sigma>0$, $k \geq 1$ and $\vec{y} \in \RR^k$, define $\sS_\sigma^{(1)}(y):=1$, $\sS_\sigma^{(2)}(y_1,y_2):=1$, and
\begin{equation}\label{e:Ssigmak}
    \sS^{(k)}_\sigma(\vec{y}):=\Big(\sum\limits_{i,j\in[k]}(y_i-y_j)^2\Big)^{-\frac{k-2}{2}+\sigma}\;, \qquad k \geq 3\;.
\end{equation}
Recall that for a set $S$, we use $\emP(S)$ to denote the set of all partitions of $S$. The following lemma is frequently used. 

\begin{lem} \label{lem:prod_sum_prod_S}
    For every $n\in \NN$ and every partition $P \in \emP([n])$, we have
    \begin{equation*}
        \prod_{B \in P} \Big( \sum_{P' \in \emP(B)} \prod_{B' \in P'} \sS_\sigma^{(|B'|)}(\vec{y}_{B'}) \Big) \lesssim \sum_{P' \in \emP([n])} \prod_{B' \in P'} \sS_\sigma^{(|B'|)}(\vec{y}_{B'})\;.
    \end{equation*}
\end{lem}
\begin{proof}
    The left-hand side is a sum of products of $\sS_\sigma$'s. Each product in the sum is precisely $\prod_{B' \in P'} \sS_\sigma^{(|B'|)}(\vec{y}_{B'})$ for some $P' \in \emP([n])$. 
\end{proof}

We give one more lemma before stating the general decomposition. 

\begin{lem} \label{lem:prod_sum_prod}
    Let $\{G^{(k)}\}_k$ be a sequence of functions that are symmetric in $k$ variables, and $\{\Theta(k)\}_k$, $\{\Gamma(k)\}_k$ be two sequences of real numbers. For every $n\in\NN$, we have the identity
    \begin{equation*}
        \begin{split}
        &\sum_{P\in \emP([n])} \Gamma(|P|) \prod_{B \in P} \Big( \sum_{Q_B \in \emP(B)} \Theta \big(|Q_B| \big) \prod_{J \in Q_B} G^{(|J|)}(\vec{y}_J) \Big)\\
        =& \sum_{P^*\in \emP([n])} \Big( \prod_{B^* \in P^*} G^{(|B^*|)}(\vec{y}_{B^*}) \Big) \sum_{P\in\emP( P^*)} \Gamma(|P|) \Big( \prod_{P' \in P} \Theta \big( |P'| \big) \Big)\;.
        \end{split}
    \end{equation*}
\end{lem}
\begin{proof}
    For any two partitions $P^* \leq P$ (recall that it means $P^*$ is finer than $P$) and any block $B \in P$, write $P^*(B)$ as the partition $P^*$ restricted to $B$. Note that
    \begin{equation*}
        P^* = \bigcup_{B \in P} P^*(B)
    \end{equation*}
    if $P^* \leq P$. We then have the identity
    \begin{equation} \label{e:prod_sum_prod}
        \begin{split}
        &\prod_{B \in P} \Big( \sum_{Q_B \in \emP(B)} \Theta \big(|Q_B| \big) \prod_{J \in Q_B} G^{(|J|)}(\vec{y}_J) \Big)\\
        = &\sum_{P^* \leq P} \bigg[ \Big( \prod_{B \in P} \Theta \big( |P^*(B)| \big) \Big) \cdot \Big( \prod_{B^* \in P^*} G^{(|B^*|)}(\vec{y}_{B^*}) \Big) \bigg]
        \end{split}
    \end{equation}
    for every $P \in \emP([n])$, where the sum is taken over all partitions $P^* \in \emP([n])$ which are finer than $P$ (including $P$). 

    Multiplying $\Gamma(|P|)$ on both sides of \eqref{e:prod_sum_prod}, summing over $P \in \emP([n])$ and applying Fubini to change the order of summation, we get
    \begin{equation*} 
        \begin{split}
        &\sum_{P\in \emP([n])} \Gamma(|P|) \prod_{B \in P} \Big( \sum_{Q_B \in \emP(B)} \Theta \big(|Q_B| \big) \prod_{J \in Q_B} G^{(|J|)}(\vec{y}_J) \Big)\\
        =&\sum_{P^*\in \emP([n])} \Big( \prod_{B^* \in P^*} G^{(|B^*|)}(\vec{y}_{B^*}) \Big) \sum_{P\geq P^*} \Gamma(|P|) \Big( \prod_{B \in P} \Theta \big( |P^*(B)| \big) \Big)\;.
        \end{split}
    \end{equation*}
    Now note that any $P\geq P^*$ can be interpreted as a partition in $\emP(P^*)$, so we have
    \begin{equ}
        \sum_{P\geq P^*} \Gamma(|P|) \Big( \prod_{B \in P} \Theta \big( |P^*(B)| \big) \Big) = \sum_{P\in\emP( P^*)} \Gamma(|P|) \Big( \prod_{P' \in P} \Theta \big( |P'| \big) \Big)\;.
    \end{equ}
    The proof is then complete. 
\end{proof}

\begin{prop}\label{pr:decomposition}
    Fix $0<\sigma \ll 1$, and let $K_\sigma = |\log_2 \sigma|-10$. Let $\overline{\lambda}$ be same as that in Lemma~\ref{lem:decomposition1}. Then for every $2 \leq k\leq K_\sigma$, there exists $c_k>0$ such that for every $\lambda \in(0,\overline{\lambda}]$, we have
    \begin{equation}\label{e:bound}
        \Vert p_t^{(k)}(\vec{y},\cdot\,;v)\Vert_{\lL^\infty_{-\lambda}} \lesssim (1+t^{-2^k\sigma})e^{-c_k\lambda |\vec{y}|}\sum_{P\in\emP([k])}\Big(\prod_{B\in P}\sS^{(|B|)}_\sigma(\vec{y}_B)\Big)
    \end{equation}
    uniformly over all $t>0$ and $v\in\vV_{\beta,0}$. Moreover, for every $v\in\vV_{\beta,0}$, $p_t^{(k)}(\vec{y},z;v)$ can be decomposed as 
    \begin{equation}\label{e:decomposition}
        p_t^{(k)}(\vec{y},z;v)=\sum_{P\in\emP([k])}\Big(\prod_{B\in P}\Lambda^{(|B|)}_v(\vec{y}_B)\Big)m^{(|P|)}(z)+\hat{p}_{t}^{(k)}(\vec{y},z;v)\;,
    \end{equation}
    where $\Lambda_v^{(j)}$ for $j<k$ are the same functions appearing in the decomposition of $p_t^{(j)}$ (in particular, $\Lambda_v^{(1)}$ and $\Lambda_v^{(2)}$ are given by \eqref{e:lambda_v-1_expression} and \eqref{e:lambda_v-2_expression}), and $\Lambda_v^{(k)}$ is symmetric in its variables. This decomposition does not depend on $\sigma$. Furthermore, for every $\lambda\in(0,\overline{\lambda}]$, $\Lambda_v^{(k)}$ and $\hat{p}_t^{(k)}$ satisfy the bounds
    \begin{equation}\label{e:decomposition-1}
        \vert \Lambda^{(k)}_v(\vec{y})\vert \lesssim e^{-c_k\overline{\lambda} |\vec{y}|}\sum_{P\in\emP([k])}\Big(\prod_{B\in P}\sS^{(|B|)}_\sigma(\vec{y}_B)\Big)\;,
    \end{equation}
    \begin{equation}\label{e:decomposition-2}
        \Vert \hat{p}_{t}^{(k)}(\vec{y},\cdot\,;v)\Vert_{\lL^\infty_{-\lambda}}\lesssim t^{-2^k\sigma}e^{-c_kt}e^{-c_k\lambda|\vec{y}|}\sum_{P\in\emP([k])}\Big(\prod_{B\in P}\sS^{(|B|)}_\sigma(\vec{y}_B)\Big)
    \end{equation}
    uniformly over all $t>0$ and $v\in\vV_{\beta,0}$. 
\end{prop}
\begin{proof}
We prove by induction on $k$. The case $k=2$ is proved in Lemma~\ref{lem:decomposition2}. Now suppose the conclusion is proved for all $2\leq j<k$, where $k\leq K_\sigma$, and consider the decomposition for $p_t^{(k)}$.

The kernel $p_t^{(k)}(\Vec{y}, \cdot\,; v)$ satisfies \eqref{e:linearized_PDE} with $r_t = \r_t(v)$ given in \eqref{e:rtv} and $g_t = g_t^{(k)}(\vec{y};v)$ given in \eqref{e:g_t-k_defn}. By \eqref{e:r_decay}, the family $\big(\r_t(v) \big)_{t \geq 0}$ satisfies the assumption \eqref{e:ass_r_decay} in Proposition~\ref{prop:linearized_PDE}. We now check that this $g_t$ also satisfies the assumptions \eqref{e:ass_gt_uniform} and \eqref{e:ass_gt_ec} in Proposition~\ref{prop:linearized_PDE}. 

For $t \geq 1$, we apply H\"older inequality to the right-hand side of \eqref{e:g_t-k_defn} to get
\begin{equation*}
    \| g_t \|_{\lL_{-\lambda}^1} \lesssim \sum_{P \in \emP[k], |P| \neq 1} \prod_{B \in P} \big\| p_t^{(|B|)} (\vec{y}_B, \cdot\,; \, v) \big\|_{\lL_{-\lambda/2k}^\infty}\;.
\end{equation*}
Note that for $P \in \emP[k]$ with $|P| \neq 1$, each block $B \in P$ satisfies $|B| \leq k-1$. Hence, the induction hypothesis can be used for each $p_t^{(|B|)}$ on the right-hand side above. Together with Lemma~\ref{lem:prod_sum_prod_S}, we get
    \begin{equation*}
        \|g_t\|_{\lL^1_{-\lambda}} \lesssim e^{-c\lambda |\vec{y}|}\sum_{P\in\emP([k])}\Big(\prod_{B\in P}\sS^{(|B|)}_\sigma(\vec{y}_B)\Big)\;, \quad t \geq 1\;.
    \end{equation*}
    For $t \leq 1$, we need to be careful about the singularities from those $p_t^{(|B|)}$ with $|B|=1$. In this case, we first use H\"older inequality and $f\in \cC_b^{\infty}$ to get
    \begin{equ}
        \Vert g_t\Vert_{\lL^1_{-\lambda}} \lesssim \sum_{\substack{P\in\emP([k])\\|P|\neq 1}} \bigg( \Big\Vert\prod_{\substack{B \in P \\ |B|=1}}p_t(\vec{y}_B,\cdot\,;v)\Big\Vert_{\lL^1_{-\lambda/k}} \prod_{\substack{B\in P \\ |B|\neq 1}}\Vert p_t^{(|B|)}(\vec{y}_B,\cdot\,;v)\Vert_{\lL^\infty_{-\lambda/k}} \bigg)\;.
    \end{equ}
    We bound each term in the sum as follows. Let
    \begin{equation*}
        S(P) := \big\{ j \in [k]: \; \{j\}\in P \big\}
    \end{equation*}
    be the set of indices which are singleton sets in $P$. For the first term, if $|S(P)| \geq 3$, we use \eqref{e:technical1}; if $|S(P)| = 2$, we use H\"older inequality and \eqref{e:bound1-2}; if $|S(P)| = 1$, we use the inclusion $\lL^{2}_{-\lambda/2k}\hookrightarrow\lL^{1}_{-\lambda/k}$ and \eqref{e:bound1-2}. For the second term, we use the induction hypothesis on $p_t^{(j)}$ for $j <k$. Hence, by Lemma~\ref{lem:prod_sum_prod_S}, we get
    \begin{equation*}
    \begin{aligned}
        \Vert g_t\Vert_{\lL^1_{-\lambda}} &\lesssim (1+t^{-\frac{1}{2}-\sigma}) \, \sum_{P\in\emP([k]),|P|\neq 1}  \bigg[ e^{-c\lambda|\vec{y}_{S(P)}|} \, \sS_\sigma^{(|S(P)|)}(\vec{y}_{S(P)})\\
        &\qquad \prod_{B\in P, |B|\neq 1}\bigg((1+t^{-2^{|B|}\sigma})e^{-c\lambda |\vec{y}_B|}\sum_{P'\in\emP(B)}\Big(\prod_{B'\in P'}\sS^{(|B'|)}_\sigma(\vec{y}_{B'})\Big)\bigg) \bigg]\\
        &\lesssim \big(1+t^{-\frac{1}{2}-2^k\sigma} \big) \, e^{-c\lambda |\vec{y}|}\sum_{P\in\emP([k])}\Big(\prod_{B\in P}\sS^{(|B|)}_\sigma(\vec{y}_B)\Big)\;,
    \end{aligned}
    \end{equation*}
    where the constant $c$ on the exponential depends on $k$. This verifies the assumption \eqref{e:ass_gt_uniform}. Now, define 
    \begin{equ}
        g := \sum_{P\in\emP([k]),|P|\neq 1}f^{(|P|)}(m)\bigg(\prod_{B\in P}\Big(\sum_{P'\in\emP(B)}\big(\prod_{B'\in P'}\Lambda^{(|B'|)}_v(\vec{y}_{B'})\big)m^{(|P'|)}\Big)\bigg)\;.
    \end{equ}
    We need to check $\pP g = 0$ and to bound $\|g_t-g\|_{\lL^1_{-\lambda}}$. For $t\leq 1$, we bound $\|g_t\|_{\lL^1_{-\lambda}}$ and $\|g\|_{\lL^1_{-\lambda}}$ separately. For $t\geq 1$, similar arguments as in Lemma~\ref{lem:decomposition2} imply that we only need to control the quantity
    \begin{equ}
        \Big \Vert \prod_{B\in P} p_t^{(|B|)}(\vec{y}_B, \cdot\,; v) - \prod_{B\in P} \Big(\sum_{P'\in \emP(B)}\big(\prod_{B'\in P'}\Lambda_v^{(|B'|)}(\vec{y}_{B'})\big)m^{(|P'|)}\Big) \Big \Vert_{\lL^1_{-\lambda}}
    \end{equ}
    for each partition $P\in \emP([k])$ with $|P|\neq 1$. Note that here we are treating products of arbitrary numbers, so we choose to first use $\lL^{\infty}_{-\lambda/2} \hookrightarrow \lL^1_{-\lambda}$, so that we will not lose integrability when using H\"older inequality. Then the bound follows by using the decompositions \eqref{e:decomposition1} and \eqref{e:decomposition} as well as bounds \eqref{e:decomposition1-2} and \eqref{e:decomposition-2} for $p^{(j)}_{t}$ with $j< k$. In conclusion, we get
    \begin{equation*}
        \Vert g_t-g\Vert_{\lL^1_{-\lambda}}\lesssim t^{-\frac{1}{2}-2^k\sigma}e^{-ct}e^{-c\lambda |\vec{y}|}\sum_{P\in\emP([k])}\Big(\prod_{B\in P}\sS^{(|B|)}_\sigma(\vec{y}_B)\Big)\;.
    \end{equation*}
    This verifies the bound \eqref{e:ass_gt_ec}. It remains to check that $\pP g = 0$. By Lemma~\ref{lem:prod_sum_prod} (with $\Gamma(1)=0$ and $\Gamma(k)=1$ for $k \geq 2$), we can rewrite $g$ as
    \begin{equation} \label{e:gv}
    g = \sum_{\substack{P^*\in\emP([k])\\|P^*|\neq 1}} \Big(\prod_{B^*\in P^*}\Lambda_v^{(|B^*|)}(\vec{y}_{B^*})\Big) \Big(\sum_{\substack{P \in\emP(P^*)\\|P|\neq 1}} f^{(|P|)}(m)\prod_{P'\in P}m^{(|P'|)}\Big)\;.
    \end{equation}
    Keeping differentiating $\aA m' = 0$, we see that for every $j \geq 1$, we have
    \begin{equ}
        \label{e:m_derivatives}
        \aA m^{(j)} = \sum_{P\in \emP([j]), |P|\neq 1} f^{(|P|)}(m)\prod_{P'\in P}m^{(|P'|)}\;.
    \end{equ}
    Therefore, the quantity in the second parentheses on the right-hand side of \eqref{e:gv} is precisely $\aA m^{(|P^*|)}$. Since $\bracket{\aA h, m'} = \bracket{h, \aA m'} = 0$, this implies $\pP g = 0$. Hence, we deduce that $g_t$ satisfies the assumptions of Proposition~\ref{prop:linearized_PDE} with
    \begin{equation*}
        M = M(\vec{y}) = C \, e^{-c\lambda |\vec{y}|}\sum_{P\in\emP([k])}\Big(\prod_{B\in P}\sS^{(|B|)}_\sigma(\vec{y}_B)\Big)
    \end{equation*}
    for some $c$ depending on $k$ and $C>0$ depending on $k$ and $\lambda$, and for the exponent ``$\sigma$" being $\frac{1}{2} - 2^k \sigma$. 
    
    The bound \eqref{e:bound} follows directly from Proposition~\ref{prop:linearized_PDE} with $p=1$ and $q=+\infty$. Also, from the identity \eqref{e:gv} and \eqref{e:m_derivatives}, one also sees that
    \begin{equation*}
        \aA^{-1} g = \sum_{P\in\emP([k]),|P|\neq 1}\Big(\prod_{B\in P}\Lambda^{(|B|)}_v(\vec{y}_B)\Big)\pP^\perp m^{(|P|)}\;,
    \end{equation*}
    where the meaning of $\aA^{-1}g$ is given in Lemma~\ref{lem:A_equation}. Let
    \begin{equation*}
        \tilde{g} =\int_0^{+\infty} \pP \Big( g_s + \r_s(v) \, p_s^{(k)}(\vec{y},\cdot\,;v) \Big) \,\md s + \sum_{\substack{P \in \emP([k])\\|P| \neq 1}} \Big(\prod_{B\in P}\Lambda^{(|B|)}_v(\vec{y}_B)\Big) \pP^\perp m^{(|P|)}\;.
    \end{equation*}
    Then by Proposition~\ref{prop:linearized_PDE}, we have the bounds 
    \begin{equation*}
        \begin{split}
        \Vert \tilde{g}\Vert_{\lL^\infty_{-\lambda}} &\lesssim e^{-c\lambda |\vec{y}|} \sum_{P\in\emP([k])}\Big(\prod_{B\in P}\sS^{(|B|)}_\sigma(\vec{y}_B)\Big)\;,\\
        \Vert p_t^{(k)}(\vec{y},\cdot\,;v) - \tilde{g}\Vert_{\lL^\infty_{-\lambda}} &\lesssim \big( 1 + t^{-2^k \sigma} \big) \, e^{-c't} \, e^{-c\lambda |\vec{y}|}\sum_{P\in\emP([k])}\Big(\prod_{B\in P}\sS^{(|B|)}_\sigma(\vec{y}_B)\Big)
        \end{split}
    \end{equation*}
    for some $c'>0$. Now set $\hat{p}_{t}^{(k)}(\Vec{y},z;v) = p_t^{(k)}(\Vec{y},z;v) - \tilde{g}(z)$, then \eqref{e:decomposition-2} holds. Since $\pP$ is ``projection" onto $\spann\{m'\}$, we see \eqref{e:decomposition} holds with $\Lambda_v^{(k)}$ which is uniquely determined by the relation
    \begin{equation*}
        \Lambda_{v}^{(k)}(\Vec{y}) \, m' = \int_0^{+\infty} \pP \Big( g_s + \r_s(v) \, p_s^{(k)}(\Vec{y},\cdot\,;v) \Big) \,\md s - \sum_{\substack{P \in \emP([k])\\|P| \neq 1}} \Big(\prod_{B\in P}\Lambda^{(|B|)}_v(\vec{y}_B)\Big)\pP m^{(|P|)}\;.
    \end{equation*}
    The symmetry of $\Lambda_v^{(k)}$ under permutations of its variables follows from that of $p_s^{(k)}$, $g_s$ and $\Lambda_v^{(j)}, j<k$. Furthermore, by the inductive hypothesis, 
    \begin{equation*}
        \vert \Lambda^{(j)}_v(\vec{y})\vert \lesssim e^{-c\overline{\lambda} |\vec{y}|}\sum_{P\in\emP([j])}\Big(\prod_{B\in P}\sS^{(|B|)}_\sigma(\vec{y}_B)\Big)
    \end{equation*}
    for $j<k$. Combining it with Lemma~\ref{lem:prod_sum_prod_S}, we deduce the bound \eqref{e:decomposition-1} follows. 
\end{proof}

\begin{rmk}
    The bounds \eqref{e:bound} and \eqref{e:decomposition-2} are \textit{not} true for $k=1$, in which case there is a more serious singularity in time given in Lemma~\ref{lem:decomposition1}. 
\end{rmk}

\subsection{Fr\'echet derivatives of the deterministic flow}
\label{sec:Ft_derivative}

We now show that $F^t$ is indeed infinitely Fr\'echet differentiable with $p_t^{(k)}$'s as kernels. This is the content of the following proposition. 

\begin{prop} \label{prop:Ft_derivative}
    For every $t>0$, the map $F^t: \lL^\infty \rightarrow \lL^\infty$ is infinitely Fr\'echet differentiable. Moreover, for every $k$ and $v \in \lL^\infty$, the kernel of $D^k F^t(v)$ is $p_t^{(k)}(v)$ in the sense that
    \begin{equation*}
        \langle D^k F^t(v), \, \vec{\varphi}_{[k]}  \rangle(z) = \int_{\RR^k} p_t^{(k)}(\vec{y}, z; v) \prod_{j=1}^{k} \varphi_j(y_j) \, \md \vec{y}
    \end{equation*}
    for every $\varphi_1, \dots, \varphi_k \in \lL^\infty$. 
\end{prop}
\begin{proof}
We prove by induction, starting with $k=1$. Fix $v \in \lL^\infty$. For every $h \in \lL^\infty$, let
\begin{equation*}
    u_h[t] := F^t(v+h) - F^t(v) - \int_\RR p_t(y, \cdot\,; v) h(y) \,\md y\;.
\end{equation*}
We need to show that for every $t>0$, $\|u_h[t]\|_{\lL^\infty} / \|h\|_{\lL^\infty} \rightarrow 0$ as $\|h\|_{\lL^\infty} \rightarrow 0$. $u_h$ satisfies the equation
\begin{equation*}
    \d_t u_h = \Delta u_h + f'\big( F^t(v) \big) u_h + R_h\;, \qquad u_h[0] = 0\;,
\end{equation*}
where
\begin{equation*}
    R_h[t] = f \big( F^t(v+h) \big) - f \big( F^t(v) \big) - f'\big( F^t(v) \big) \, \big( F^t(v+h) - F^t(v) \big)
\end{equation*}
satisfies
\begin{equation*}
    \|R_h[t]\|_{\lL^\infty} \lesssim \|F^t(v+h) - F^t(v)\|_{\lL^\infty}^2 \lesssim_t \|h\|_{\lL^\infty}^2\;.
\end{equation*}
Since $\|e^{(t-s)\Delta}\|_{\lL^\infty \rightarrow \lL^\infty} \leq 1$ and $f \in \cC_b^\infty$, we have
\begin{equation*}
    \|u_h[t]\|_{\lL^\infty} \lesssim \int_{0}^{t} \|R_h[s]\|_{\lL^\infty} \, \md s + \int_{0}^{t} \|u_h[s]\|_{\lL^\infty} \, \md s\;.
\end{equation*}
It then follows from Gr\"onwall's inequality and the bound for $R_h$ that
\begin{equation*}
    \|u_h[t]\|_{\lL^\infty} \lesssim_t \|h\|_{\lL^\infty}^2\;.
\end{equation*}
This finishes the case $k=1$. Now suppose the claim holds for $j=1, \dots, k$, and we wish to establish the case $k+1$. Fix $v \in \lL^\infty$ and $\varphi_1, \dots, \varphi_k \in \lL^\infty$. For every $h \in \lL^\infty$, let
\begin{equation*}
    w_h[t] := \langle D^k F^t(v+h) - D^k F^t(v)\,, \; \vec{\varphi}_{[k]} \rangle\;,
\end{equation*}
and
\begin{equation*}
    u_h[t] := w_h[t] - \int_{\RR^{k+1}} p_{t}^{(k+1)}(\vec{y}, \cdot\,; v) \, h(y_{k+1}) \, \prod_{j=1}^{k} \varphi_j(y_j) \, \md \vec{y}\;.
\end{equation*}
Both $w_h$ and $u_h$ depends on $\varphi_1, \dots, \varphi_k$. We need to show that for every $t>0$, $\|u_h[t]\|_{\lL^\infty} / \|h\|_{\lL^\infty} \rightarrow 0$ as $\|h\|_{\lL^\infty} \rightarrow 0$, uniformly over the choices $\varphi_1, \dots, \varphi_k$ with $\|\varphi_j\|_{\lL^\infty} \leq 1$. Write 
\begin{equation*}
    \begin{split}
    G_t^{(k)}(v) &:= \int_{\RR^k} g_t^{(k)}(\vec{y}; v) \prod_{j=1}^{k} \varphi_j (y_j) \, \md \vec{y}\;,\\
    \mathscr{G}_t^{(k+1)}(v,h) &:= \int_{\RR^{k+1}} g_{t}^{(k+1)} (\vec{y}; v) \, h(y_{k+1}) \, \prod_{j=1}^{k} \varphi_j(y_j) \, \md \vec{y}\;.
    \end{split}
\end{equation*}
Note that $u_h$ satisfies the equation
\begin{equation*}
    \d_t u_h = \Delta u_h + f'\big( F^t(v) \big) u_h + \Big( f' \big( F^t(v+h) \big) - f' \big( F^t(v) \big) \Big) \, w_h + R_h\;, \quad u_h[0] = 0\;,
\end{equation*}
where
\begin{equation*}
    \begin{split}
    R_h[t] = &\; G_t^{(k)}(v+h) - G_t^{(k)}(v) - \mathscr{G}_t^{(k+1)}(v,h)\\
    &+ \Big( f' \big( F^t(v+h) \big) - f' \big( F^t(v) \big) \Big) \, \langle D^k F^t(v)\,, \; \vec{\varphi}_{[k]} \rangle\;.
    \end{split}
\end{equation*}
Since $\|e^{(t-s)\Delta}\|_{\lL^\infty \rightarrow \lL^\infty} \leq 1$ and $f \in \cC_b^\infty$, we have
\begin{equation*}
    \|u_h[t]\|_{\lL^\infty} \lesssim_t \|h\|_{\lL^\infty} \int_{0}^{t} \|w_h[s]\|_{\lL^\infty} \md s + \int_0^t \|R_h[s]\|_{\lL^\infty} \md s + \int_0^t \|u_h[s]\|_{\lL^\infty} \md s\;.
\end{equation*}
For $w_h$, it satisfies
\begin{equation*}
    \begin{split}
    \d_t w_h = &\Delta w_h + f'\big( F^t(v+h) \big) w_h + G_t^{(k)}(v+h) - G_t^{(k)}(v)\\
    &+ \Big( f'\big( F^t(v+h) \big) - f'\big( F^t(v) \big) \Big) \, \langle D^k F^t(v)\,, \; \vec{\varphi}_{[k]} \rangle
    \end{split}
\end{equation*}
with $w_h[0] = 0$. Since $g_t^{(k)}$ are multi-linear functions of $p_t^{(j)}$ for $j \leq k-1$, it follows from Gr\"onwall's inequality and the induction hypothesis that 
\begin{equation*}
    \sup_{s \in [0,t]} \|w_h[s]\|_{\lL^\infty} =_t\oO(\|h\|_{\lL^\infty})\;.
\end{equation*}
Furthermore, the expression of $g_t^{(k)}$ and the induction hypothesis on Fr\'echet differentiability up to $k-1$ imply that
\begin{equation*}
    \mathscr{G}_{t}^{(k+1)}(v,h) = \langle D G_t^{(k)}(v)\,, \; h \rangle + f''\big( F^t(v) \big) \, \langle D F^t(v)\,, \; h \rangle \, \langle D^k F^t\,, \vec{\varphi}_{[k]} \rangle\;.
\end{equation*}
Hence we deduce
\begin{equation*}
    \sup_{s \in [0,t]}  \| R_h[s] \|_{\lL^\infty} =_t o\big( \|h\|_{\lL^\infty} \big)\;.
\end{equation*}
All bounds and convergences above are uniform over $\varphi_1, \dots, \varphi_k$ with $\|\varphi_j\|_{\lL^\infty} \leq 1$. The conclusion then follows from Gr\"owall's inequality. 
\end{proof}

\section{Proof of Theorems~\ref{thm:derivativesOfPsicork}, ~\ref{thm:zeta_continuity_bound} and~\ref{thm:functional_equation}}
\label{sec:proof_of_Theorems}

In this section, we prove differentiability properties of $\bar{\psi}^{\eps,\ell}$ defined in \eqref{e:psi_bar_eps_ell}, and use them to prove Theorems~\ref{thm:derivativesOfPsicork}, ~\ref{thm:zeta_continuity_bound} and~\ref{thm:functional_equation}. We start with $\bar{\psi}^{\eps,0} = \zeta$.

\subsection{Fr\'echet derivatives of the limiting functional $\zeta$}

The functional $\zeta$ and the deterministic flow $F^t$ are related by
\begin{equation*}
    m_{\zeta(v)} = \lim_{t \rightarrow +\infty} F^t(v)\;.
\end{equation*}
One would like to analyze derivatives of $\zeta$ via derivatives of $F^t$ for large $t$ by differentiating both sides above. One issue with this approach is that differentiability of $m_{\zeta(v)}$ with respect to $v$ does not directly give differentiability of $\zeta$. 

On the other hand, differentiability properties of the functional $\eta$ introduced in Proposition~\ref{prop:fermi} are much easier to obtain, and one has the more direct relation
\begin{equation*}
    \zeta(v) = \lim_{t \rightarrow +\infty} \eta \big( F^t(v) \big) =: \lim_{t \rightarrow +\infty} \eta_t(v) \;,
\end{equation*}
where we write $\eta_t = \eta \circ F^t$ for convenience. In what follows, we will analyze the differentiability properties of $\zeta$ from those of $\eta$ (given in Lemma~\ref{lem:derivativesOfEta}) via the above relation. We first give a preliminary lemma.

\begin{lem} \label{lem:mean_value_path}
    There exists $C > 0$ such that for every $v \in \vV_{\beta,0}$ with $v\neq m$, there exists $\theta \in \RR$ such that
    \begin{equation*}
        |\theta| \leq C \dist(v, \mM) \quad \text{and} \quad \|v-m_\theta\| < \min \left\{ \beta, \, 2 \dist(v, \mM) \right\}\;.
    \end{equation*}
    For the above $\theta$, the family $(u_s)_{s \in [0,1]}$ given by
    \begin{equation} \label{e:path_us}
        u_s := m_{s \theta} + s (v - m_\theta)
    \end{equation}
    satisfies $\{u_s:s \in [0,1]\} \subset \vV_\beta$ with $u_0=m$ and $u_1 = v$, and that $|\zeta(u_s)| \lesssim 1$ uniformly over $s \in [0,1]$ and $v \in \vV_{\beta,0}$. Furthermore, $\dot{u}_s = v - m_\theta - \theta m_{s\theta}'$ satisfies
    \begin{equation*}
        \sup_{s \in [0,1]} \|\dot{u}_s\|_{\lL^\infty} \lesssim \dist (v, \mM)\;.
    \end{equation*}
    All the proportionality constants above are independent of $v \in \vV_{\beta,0}$. 
\end{lem}
\begin{proof}
    Fix $v \in \vV_{\beta,0}$. The existence of a $\theta$ with the required bound on $\|v-m_\theta\|_{\lL^\infty}$ is immediate. For such a $\theta$, we have $|\eta(v)-\theta|\lesssim \dist(v,\mM)$ from Proposition~\ref{prop:fermi}. Furthermore, we have $|\eta(v)| \lesssim \dist(v,\mM)$ by Proposition~\ref{pr:LinftyEC} (recalling that $\zeta(v) = 0$). Hence, we have $|\theta| \lesssim \dist (v, \mM)$. 
    
    We now turn to the family $(u_s)_{s \in [0,1]}$. That $u_0=m$ and $u_1=v$ is immediate. Also, by the above property of $\theta$, we have
    \begin{equation*}
        \|u_s - m_{s\theta}\|_{\lL^\infty} = s \|v-m_\theta\|_{\lL^\infty} < \beta
    \end{equation*}
    for $s \in[0,1]$, and hence $u_s \in \vV_\beta$ for all $s \in [0,1]$. Moreover, by Propositions~\ref{prop:fermi} and~\ref{pr:LinftyEC}, we have
    \begin{equation*}
        |\eta(u_s) - s \theta| \lesssim 1\;, \qquad |\zeta(u_s) - \eta(u_s)| \lesssim \dist (u_s, \mM)\;.
    \end{equation*}
    Together with $|\theta| \lesssim 1$, this implies $|\zeta(u_s)| \lesssim 1$ for all $s\in[0,1]$. 

    Finally, it follows from the properties of $\theta$ that
    \begin{equation*}
        \sup_{s \in [0,1]} \|\dot{u}_s\|_{\lL^\infty} \leq \|v-m_\theta\|_{\lL^\infty} + |\theta| \cdot \|m'\|_{\lL^\infty} \lesssim \dist (v, \mM)\;.
    \end{equation*}
    All the bounds above are uniform over $v \in \vV_{\beta,0}$. This completes the proof. 
\end{proof}

\begin{prop} \label{prop:derivativesOfZeta}
    The functional $\zeta$ satisfies the following properties:
    \begin{enumerate}
        \item For every $n\in \NN$, $D^n\eta_t(v)$ converges to $-\Lambda^{(n)}_v$ in $\emL^n\big((\lL^\infty)^n, \RR\big)$ as $t \rightarrow +\infty$ uniformly in $v\in \vV_\beta$, where $\Lambda^{(n)}_v$ is viewed as a multi-linear map from $(\lL^\infty)^n$ to $\RR$. As a result, $\zeta$ is infinitely Fr\'echet differentiable in $\vV_\beta$, and $D^n\zeta(v)$ has a kernel which equals
        \begin{equation} \label{e:Dnzeta_identity}
            D^n\zeta(\Vec{y};v) = - \Lambda^{(n)}_v(\Vec{y})
        \end{equation}
        for all $n \in \NN$ and $v \in \vV_\beta$. 
        
        \item The derivatives of $\zeta$ satisfy the translation and reflection properties
        \begin{equation*}
            \begin{split}
            D^n\zeta(y_1,\cdots,y_n;\emS_z v) &= D^n\zeta(y_1-z,\cdots,y_n-z;v)\;,\\
            D^n\zeta(y_1,\cdots,y_n;\emR v) &= (-1)^{n-1} \, D^n\zeta(-y_1,\cdots,-y_n;v)
            \end{split}
        \end{equation*}
        for every $n \in \NN$ and $v \in \vV_\beta$. 

        \item There exists $\lambda>0$ such that
        \begin{equation} \label{e:Dzeta_uniform}
            \begin{split}
            |D\zeta(y;v)| &\lesssim e^{-\lambda|y-\zeta(v)|}\;,\\
            |D^2\zeta(y_1,y_2;v)| &\lesssim e^{-\lambda(|y_1-\zeta(v)|+|y_2-\zeta(v)|)}\;,
            \end{split}
        \end{equation}
        and
        \begin{equation*}
            \begin{split}
            |D\zeta(y;v) - D\zeta(y;m_{\zeta(v)})| &\lesssim e^{-\lambda|y-\zeta(v)|} \dist(v,\mM)\;,\\
            |D^2\zeta(y_1,y_2;v)-D^2\zeta(y_1,y_2;m_{\zeta(v)})| &\lesssim e^{-\lambda(|y_1-\zeta(v)|+|y_2-\zeta(v)|)}\dist(v,\mM)
            \end{split}
        \end{equation*}
        uniformly over $v \in \vV_{\beta}$ and $y, y_1, y_2 \in \RR$. 
    \end{enumerate}
\end{prop}
\begin{proof}
We first verify Assertions 2 and 3 by assuming Assertion 1. The translation and reflection properties of $D^n \zeta$ in Assertion 2 follow from the relations
\begin{equation*}
    \zeta (\emS_z v) = \zeta(v) + z\;, \qquad \zeta (\emR v) = - \zeta(v)\;,
\end{equation*}
and taking Fr\'echet derivatives in $v$ on both sides. We now turn to the first two uniform bounds in Assertion 3. For $v \in \vV_{\beta,0}$, they follow from $D^k \zeta(v) = - \Lambda_v^{(k)}$ in Assertion 1 and the bounds for $\Lambda_v^{(k)}$ in \eqref{e:decomposition1-1} and \eqref{e:decomposition2-1} for $k=1,2$. The bounds for general $v \in \vV_\beta$ then follow from the translation property in Assertion 2. 

For the bounds on differences between $D^k \zeta(v)$ and $D^k \zeta (m_{\zeta(v)})$, we again first fix $v \in \vV_{\beta,0}$ with $v \neq m$. We have
\begin{equation*}
    D^k \zeta (\vec{y}; v) - D^k \zeta (\vec{y}; m) = \int_{0}^{1} \int_\RR D^{k+1} \zeta (\vec{y}, z; u_s) \, \dot{u}_s (z) \,\md z \,\md s\;,
\end{equation*}
where $(u_s)_{s \in [0,1]}$ is as in \eqref{e:path_us}. By the bound on $\|\dot{u}_s\|_{\lL^\infty}$ in Lemma~\ref{lem:mean_value_path}, we have
\begin{equation*}
    |D^k \zeta(\vec{y}; v) - D^k \zeta(y;m)| \lesssim \Big( \sup_{s \in [0,1]} \|D^{k+1} \zeta(\vec{y}, \cdot\,; u_s)\|_{\lL^1} \Big) \cdot \dist (v, \mM)\;,
\end{equation*}
where $u_s \in \vV_\beta$ for all $s \in [0,1]$ and $|\zeta(u_s)| \lesssim 1$ uniformly over $s \in [0,1]$. For $k=1$, by the uniform bound for $D^2 \zeta$ in \eqref{e:Dzeta_uniform}, we have
\begin{equation*}
    \|D^2 \zeta (y, \cdot\,; u_s)\|_{\lL^1} = \int_\RR |D^2 \zeta (y,z;u_s)| \, \md z \lesssim e^{-\lambda |y-\zeta(u_s)|} \lesssim e^{-\lambda |y|}\;.
\end{equation*}
The desired bound for $D \zeta (y;v) - D \zeta(y;m)$ then follows. For $k=2$, we use \eqref{e:decomposition-1} and Remark~\ref{rmk:translation} so that
\begin{equation*}
    \begin{split}
    \big\| D^3\zeta(y_1,y_2,\cdot\,;u_s) \big\|_{\lL^1} &\lesssim e^{-\lambda(|y_1 - \zeta(u_s)| + |y_2-\zeta(u_s)|)} \int_\RR e^{-\lambda |z-\zeta(u_s)|} \big( 1 + \sS_\sigma^{(3)}(y_1,y_2,z) \big) \, \md z\\
    &\lesssim e^{-\lambda(|y_1| + |y_2|)}\;.
    \end{split}
\end{equation*}
This finishes the desired bounds for $D^k\zeta(\vec{y};v) - D^k \zeta(\vec{y};m)$ for $k=1,2$ and $v \in \vV_{\beta,0}$. The bounds for general $v \in \vV_\beta$ follows immediately from the translation property. 

Hence, it remains to prove Assertion 1. We will show that for every $n \in \NN$, there exists $c>0$ such that
\begin{equation} \label{e:EC_derivatives_eta}
    \big| \langle D^n\eta_t(v) + \Lambda_v^{(n)}, \;  \vec{\varphi}_{[n]} \rangle \big| \lesssim e^{-ct}\prod_{i=1}^n\Vert \varphi_i\Vert_{\lL^\infty} 
\end{equation}
uniformly over sufficiently large $t$, $v \in \vV_\beta$ and $\varphi_1, \dots, \varphi_n \in \lL^\infty$. The infinite differentiability of $\zeta$ and the identity \eqref{e:Dnzeta_identity} follow from \eqref{e:EC_derivatives_eta}, Lemma~\ref{lem:UCProperty} and the convergence $\eta_t(v) \rightarrow \zeta(v)$ as $t \rightarrow +\infty$. 

We again note that it suffices to prove \eqref{e:EC_derivatives_eta} for $v \in \vV_{\beta,0}$, and the general case follows then from the translation properties of $D^n \eta_t$ (since $\eta_t(\emS_z v) = \eta_t(v)+z$) and $\Lambda_v^{(n)}$; see Remark~\ref{rmk:translation} for details. So we focus on proving \eqref{e:EC_derivatives_eta} for $v \in \vV_{\beta,0}$. We give details for $n=1$, and give a sketch for the general $n$ situation, whose arguments are very similar but just heavier in notation. 

For $n=1$, we need to show
\begin{equation}\label{e:quantitativeConvergenceOfZeta}
    \vert \langle D\eta_t(v) + \Lambda_v^{(1)}, \varphi \rangle \vert\lesssim e^{-ct}\Vert \varphi\Vert_{\lL^\infty}
\end{equation}
uniformly over sufficiently large $t$, $v \in \vV_{\beta,0}$ and $\varphi \in \lL^\infty$. By chain rule and that $p_t$ is the kernel for $DF^t$ (see Proposition~\ref{prop:Ft_derivative} for the precise meaning), we have 
\begin{equation} \label{e:Detav_chain_rule}
    \bracket{D\eta_t (v), \, \varphi} = \big\langle (D \eta) \big( F^t(v) \big), \, \bracket{ DF^t(v), \varphi} \big\rangle\;,
\end{equation}
where
\begin{equation*}
    \bracket{DF^t(v),\varphi}(z) = \int_\RR p_t(y,z;v)\varphi(y)\,\md y\;.
\end{equation*}
By the decomposition of $p_t$ in Lemma~\ref{lem:decomposition1} and the bound \eqref{e:decomposition1-2}, we have 
\begin{equation*}
    \Vert \bracket{DF^t(v),\varphi}- \bracket{\Lambda^{(1)}_v,\varphi}m'\Vert_{\lL^\infty_{-\lambda}} \lesssim e^{-c_1t}\Vert \varphi\Vert_{\lL^\infty}\;,\quad t\geq 1\;.
\end{equation*}
As for $(D\eta) \big( F^t(v) \big)$, we have
\begin{equation*}
    (D\eta)\big(F^t(v)\big)- D\eta(m) = \int_{0}^{1} \left\langle (D^2 \eta) \big(m + \theta (F^t(v) - m) \big)\,, \; F^t(v) - m \right\rangle  \, \md \theta\;.
\end{equation*}
By Proposition~\ref{pr:LinftyEC}, there exists $T_0>0$ and $K>0$ such that $m + \theta \big( F^t(v) - m \big) \in \vV_{\beta,K}$ for all $\theta \in [0,1]$ and $t \geq T_0$. Hence by Assertion 1 in Lemma~\ref{lem:derivativesOfEta} and the exponential convergence \eqref{e:deterministic_flow_convergence}, we have
\begin{equation*}
    \Vert (D\eta)\big(F^t(v)\big)- D\eta(m)\Vert_{\lL^1_\lambda} \lesssim \|F^t(v)-m\|_{\lL^\infty} \cdot \sup_{u \in \vV_{\beta,K}} \|D^2 \eta(u)\|_{\lL^1_\lambda(\RR^2)} \lesssim e^{-ct}
\end{equation*}
for sufficiently large $t$. Plugging the above two exponential bounds back into \eqref{e:Detav_chain_rule}, and noting $\bracket{D \eta(m), m'} = -1$ (Assertion 2 in Lemma~\ref{lem:derivativesOfEta}), we conclude \eqref{e:quantitativeConvergenceOfZeta}. 

For $n \geq 2$ and $v \in \vV_{\beta,0}$, by the chain rule (Lemma~\ref{lem:chainRule_n}), we have
\begin{equation} \label{e:Dn_etat_chain_rule}
    \langle  D^n \eta_t(v), \;  \vec{\varphi}_{[n]} \rangle = \sum_{P\in\emP([n])} \Big\langle  (D^{|P|}\eta)\big(F^t(v)\big), \; \Big(\big\langle D^{|B|}F^t(v), \vec{\varphi}_B \big\rangle\Big)_{B\in P} \Big\rangle\;.
\end{equation}
We first identify the $t \rightarrow +\infty$ limit of the above expression, and then discuss how to quantify the convergence. By Assertion 1 in Lemma~\ref{lem:derivativesOfEta}, we have
\begin{equation} \label{e:EC_derivative_eta_1}
    (D^{|P|}\eta) \big( F^t(v) \big) \rightarrow D^{|P|}\eta(m)
\end{equation}
in $\lL^1_\lambda(\RR^n)$ as $t \rightarrow +\infty$. It is understood as convergence of their corresponding kernels. Also, by Proposition~\ref{prop:Ft_derivative}, the decompositions \eqref{e:decomposition1} and \eqref{e:decomposition}, their corresponding error bounds \eqref{e:decomposition1-2} and \eqref{e:decomposition-2} for $p_t^{(|B|)}$, and that
\begin{equation*}
    \int_{\RR^{|B|}} e^{-\lambda |\vec{y}_B|} \sS_{\sigma}^{(|B|)}(\vec{y}_B) \, \md \vec{y}_B < +\infty
\end{equation*}
for every block $B$ (see \eqref{e:partition_0} below for a more general version of this bound), we have
\begin{equation} \label{e:EC_derivative_eta_2}
    \langle D^{|B|}F^t(v), \, \vec{\varphi}_B \rangle \rightarrow \sum_{P'\in\emP(B)}  \bigg[ \Big( \prod_{B'\in P'}\bracket{\Lambda_v^{(|B'|)}, \,\vec{\varphi}_{B'}} \Big) \, m^{(|P'|)} \bigg]\;
\end{equation}
in $\lL_{-\lambda}^\infty(\RR)$ as $t \rightarrow +\infty$, and uniformly over $(\varphi_i)_{i \in B}$ with $\|\varphi_i\|_{\lL^\infty} \leq 1$. Plugging the above convergences back into the expression \eqref{e:Dn_etat_chain_rule} and re-organizing the sums according to Lemma~\ref{lem:prod_sum_prod}, we get
\begin{equation*}
    \begin{split}
    &\phantom{111}\langle  D^n \eta_t(v), \; \vec{\varphi}_{[n]} \rangle\\
    &\rightarrow \sum_{P\in\emP([n])} \bigg\langle  D^{|P|}\eta(m), \;  \Big( \sum_{Q \in\emP(B)}  \Big( \prod_{J\in Q}\bracket{\Lambda_v^{(|J|)},\vec{\varphi}_J} \Big) \, m^{(|Q|)} \Big)_{B\in P} \bigg\rangle\\
    &=\sum_{P^*\in\emP([n])} \bigg[\Big(\prod_{B^*\in P^*}\bracket{\Lambda_v^{(|B^*|)}, \; \vec{\varphi}_{B^*}}\Big)\Big(\sum_{P\in\emP(P^*)}\bracket{D^{|P|}\eta(m), \; \big(m^{(|P'|)}\big)_{P'\in P}}\Big)\bigg]\;.
    \end{split}
\end{equation*}
By Assertion 2 in Lemma~\ref{lem:derivativesOfEta}, the only non-zero term in the sum in the last line above is $P^* = \{[n]\}$ the trivial partition, which equals
\begin{equation*}
   \langle \Lambda_v^{(n)}, \; \vec{\varphi}_{[n]} \rangle \cdot \bracket{D \eta(m), m'} = - \langle \Lambda_v^{(n)}, \; \vec{\varphi}_{[n]} \rangle\;.
\end{equation*}
Hence, we deduce
\begin{equation*}
    \langle  D^n \eta_t(v) + \Lambda_v^{(n)}, \; \vec{\varphi}_{[n]} \rangle \rightarrow 0
\end{equation*}
as $t \rightarrow +\infty$. To quantify the above convergence to the level \eqref{e:EC_derivatives_eta}, we need to quantify the convergences in \eqref{e:EC_derivative_eta_1} and \eqref{e:EC_derivative_eta_2}. For \eqref{e:EC_derivative_eta_1}, the exponential rate of convergence can be obtained from Assertion 1 in Lemma~\ref{lem:derivativesOfEta}, the mean value inequality, and the exponential convergence of $F^t(v)$ to $m$ in Proposition~\ref{pr:LinftyEC}. The exponential convergence in \eqref{e:EC_derivative_eta_2} follows from the error bounds of $\hat{p}_{t}$ in \eqref{e:decomposition1-2} and \eqref{e:decomposition-2}. One then proceeds similarly as in the case for $n=1$ to obtain the quantitative bound \eqref{e:EC_derivatives_eta}. The proof of the proposition is thus complete. 
\end{proof}

\subsection{Behavior of the correctors}
\label{sec:correctors}

Now we finally turn back to $\bar{\psi}^{\eps,\ell}$ given in \eqref{e:psi_bar_eps_ell}. Recall the definition of $\rho_\eps$ from \eqref{e:rho}. We define $\psi^{\eps,\ell}$ and $\psi_t^{\eps,\ell}$ by
\begin{equation}\label{e:psiellt}
    \psi^{\eps,\ell}(v) := \int_{\RR^2} D^2 \bar{\psi}^{\eps,\ell-1} (y_1,y_2;v) \rho_\eps(y_1,y_2)\,\md y_1\,\md y_2\;, \; \; \psi^{\eps,\ell}_t (v) := \psi^{\eps,\ell} \big( F^t (v) \big)\;.
\end{equation}
Then $\Bar{\psi}^{\eps,\ell} = \int_{0}^{+\infty} \psi_{t}^{\eps,\ell} \,\md t$. Recall $\beta>0$ is the value such that both Propositions~\ref{prop:fermi} and~\ref{pr:LinftyEC} hold. Let $C_{\ex}$ be the proportionality constant in Proposition~\ref{pr:LinftyEC}. For $\ell \geq 0$, define
\begin{equation} \label{e:beta_ell}
    \beta_{\ell} := \frac{\beta}{C_{\ex}^{\ell}}\;.
\end{equation}
In particular, $\beta_\ell \leq \beta$ for each $\ell$, and $v \in \vV_{\beta_{\ell}}$ implies $F^t(v) \in \vV_{\beta_{\ell-1}}$ for all $t \geq 0$. 

To prove Theorem~\ref{thm:derivativesOfPsicork}, we need to control the first two Fr\'echet derivatives of $\bar{\psi}^{\eps,\ell}$ for every $\ell$. This can be attributed to control the time integral of $D^2 \psi^{\eps,\ell}_t$ over $t \geq 0$. We have the formal expression
\begin{equation}
\begin{aligned} \label{e:second_derivative_psi}
    D^2\psi^{\eps,\ell}_t(y_1,y_2;v) = &\int_{\RR^2} D^2\psi^{\eps,\ell}\big(z_1,z_2;F^t(v)\big) \, p_{t}(y_1,z_1;v) \, p_{t}(y_2,z_2;v) \,\md z_1\,\md z_2 \\
    &+ \int_\RR  D\psi^{\eps,\ell}\big(z;F^t(v)\big) \, p_t^{(2)}(y_1,y_2,z;v) \,\md z\;.
\end{aligned}
\end{equation}
Note that we have shown $p_t^{(k)}$ converges to a non-zero limit as $t \rightarrow +\infty$, so they cannot give any decay in $t$ to control the right-hand side of \eqref{e:second_derivative_psi} factor by factor. One key ingredient in the proof below is to explore cancellations in the integration on the right-hand side of \eqref{e:second_derivative_psi}, which give decay in time. Furthermore, we also obtain uniform decay in $\vec{y}$ so that our objectives in \eqref{e:psiellt} can be shown to be well-behaved. We will justify them in Theorem~\ref{thm:derivativesOfPsik}. 

We give two lemmas before stating the essential properties of $\psi^{\eps,\ell}$.

\begin{lem} \label{lem:Lp1_inter_bound}
    Recall $\sS_{\sigma}$ defined in \eqref{e:Ssigmak}. For every $n \geq 0$, $\lambda>0$ and $\sigma \in (0,1)$, there exists $\delta > 0$ such that for every $P \in \emP([n+2])$ and $p \in [1,1+\delta)$, we have
    \begin{equation} \label{e:Lp1_inter_bound}
        \sup_{\eps\in(0,1)}\int_{\RR^{n+2}} e^{-p \lambda |\vec{y}|} \Big( \prod_{B \in P} \sS_{\sigma}^{(|B|)}(\vec{y}_B) \Big)^p \big|\rho_\eps (y_{n+1} , y_{n+2})\big| \,\md \vec{y} < +\infty\;.
    \end{equation}
\end{lem}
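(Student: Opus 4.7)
The plan is to integrate against the delta (setting $y_{n+2}=y_{n+1}$) and reduce to an $(n+1)$-dimensional integral, then check local $L^1$-integrability of $\prod_{B\in P}(\sS_\sigma^{(|B|)}(\vec{y}_B))^p$ near each singular locus. The exponential weight $e^{-p\lambda|\vec{y}|}$ handles the behavior at infinity, so all the work is local. Blocks with $|B|\leq 2$ contribute $\sS_\sigma^{(|B|)}\equiv 1$; only blocks with $|B|\geq 3$ are singular, their singularity living on the diagonal $D_B=\{y_i=y_j:i,j\in B\}$ where $\sS_\sigma^{(|B|)}(\vec{y}_B)\asymp r_B^{-(|B|-2)+2\sigma}$ with $r_B=\dist(\vec{y}_B,D_B)$.

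For each block, I would change variables $y_i=\bar{y}_B+\tilde{y}_i$ ($i\in B$) with $\sum_{i\in B}\tilde{y}_i=0$, so that $\sS_\sigma^{(|B|)}$ depends only on the relative coordinates $\tilde{\vec{y}}^{(B)}$ in a $(|B|-1)$-dimensional subspace. Distinct blocks involve disjoint indices, so their singular directions are mutually orthogonal after this change of variables, and local integrability factorizes over blocks. It therefore suffices to analyze one block at a time, with the remaining blocks providing only bounded factors near their own non-singular regions.

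I would then split into two cases depending on how the delta interacts with block $B$. In the first case, $n+1$ and $n+2$ lie in different blocks (or together in a block of size $\leq 2$, which is trivial). The delta then constrains only a center-of-mass direction, leaving every normal direction of $D_B$ free. The radial integral around each singular diagonal reads $\int_0^1 r^{|B|-2-p(|B|-2)+2p\sigma}\,\md r$, which converges for $p=1$ trivially, and for $p\in[1,1+\delta)$ by continuity.

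The main obstacle is the second case, where $n+1$ and $n+2$ both lie in the same block $B$ with $|B|\geq 3$. The delta then becomes $\delta(\tilde{y}_{n+1}-\tilde{y}_{n+2})$, a constraint lying inside the normal subspace of $D_B$, which lowers the effective normal dimension from $|B|-1$ to $|B|-2$. The relevant radial integral becomes $\int_0^1 r^{|B|-3-p(|B|-2)+2p\sigma}\,\md r$, and convergence demands $(1-p)(|B|-2)+2p\sigma>0$. At $p=1$ this is exactly the hypothesis $\sigma>0$; for $p=1+\delta$ it holds as long as $\delta<\frac{2\sigma}{|B|-2-2\sigma}$ when $|B|-2>2\sigma$, and for any $\delta>0$ otherwise. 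Taking the minimum of these admissible $\delta$'s over the finitely many blocks of the finitely many partitions of $[n+2]$ yields a uniform $\delta>0$, completing the proof.
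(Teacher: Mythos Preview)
Your argument is correct and complete in its essentials. The exponent computations in both cases are right, and the crucial observation in Case~2---that forcing $y_{n+1}=y_{n+2}$ drops the normal dimension of the diagonal by one, so the borderline condition $(1-p)(|B|-2)+2p\sigma>0$ is exactly saturated at $p=1$ by the hypothesis $\sigma>0$---is the heart of the matter. In Case~1 your phrase ``constrains only a center-of-mass direction'' should be read as: the constraint surface $\{y_{n+1}=y_{n+2}\}$ projects surjectively onto the relative-coordinate space of every block, so the lost dimension can be parametrized away in the center-of-mass variables and all radial directions remain free. That interpretation is correct; a fully written-out version would split $e^{-p\lambda|\vec y|}\le \prod_B e^{-c(|\bar y_B|+r_B)}$ (using $\sum_{i\in B}|y_i|\ge |B|\,|\bar y_B|$ and $\sum_{i\in B}|y_i|\gtrsim r_B$) so that the center-of-mass integral with the delta is uniformly bounded and the relative integrals factor over blocks.

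The paper takes a different, more explicit route: it substitutes $y_{n+1}=y_{n+2}=z$, then for each block $B$ writes $J=B\cap[n]$ and bounds $\sS_\sigma^{(|J|+1)}(\vec y_J,z)$ and $\sS_\sigma^{(|J|+2)}(\vec y_J,z,z)$ pointwise by $(\sum_{j\in J}|y_j-z|)^{-(|J|-1)+2\sigma}$ and $(\sum_{j\in J}|y_j-z|)^{-|J|+2\sigma}$ respectively. This gives three elementary $\sup_z$-uniform estimates (one for $B\subset[n]$, one for $B$ containing exactly one of $n+1,n+2$, one for $B$ containing both) which multiply cleanly because the $J$'s are disjoint. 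Your geometric approach via center-of-mass coordinates is arguably more conceptual and would generalize more readily to other singularity structures; the paper's approach is more hands-on and avoids having to justify the factorization of the exponential weight.
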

\begin{proof}
    Fix $P \in \emP([n+2])$. We first claim that there exists $\delta > 0$ such that for every $J \subset [n]$, every $\lambda>0$ and every $p \in [1,1+\delta)$, we have the bounds
\begin{equation} \label{e:partition_0}
    \int_{\RR^{|J|}} e^{-p\lambda |\vec{y}_J|} \big(\sS^{(|J|)}_\sigma(\vec{y}_J)\big)^p\,\md \vec{y}_J<\infty\;,
\end{equation}
\begin{equation} \label{e:partition_1}
    \sup_{z\in\RR}\int_{\RR^{|J|}} e^{-p\lambda |\vec{y}_J|}\big(\sS^{(|J|+1)}_\sigma(\vec{y}_J, z)\big)^p\,\md \vec{y}_J < \infty\;,
\end{equation}
\begin{equation} \label{e:partition_2}
    \sup_{z_1,z_2\in\RR}\int_{\RR^{|J|}} e^{-p\lambda |\vec{y}_J|}\big(\sS^{(|J|+2)}_\sigma(\vec{y}_J, z_1, z_2)\big)^p\,\md \vec{y}_J < \infty\;.
\end{equation}
The desired bound \eqref{e:Lp1_inter_bound} then follows from the above three bounds (using \eqref{e:partition_0} for $B \subset [n]$, \eqref{e:partition_1} for $B$ containing one of $n+1$ or $n+2$, and \eqref{e:partition_2} for $B$ containing both) and Lemma~\ref{lem:rho_boundedness}. 

Turning back to the bounds \eqref{e:partition_0} to \eqref{e:partition_2}, we first note from the definition of $\sS_\sigma^{(k)}$ in \eqref{e:Ssigmak} that
\begin{equation*}
    \begin{split}
    \sS_\sigma^{(|J|+1)} (\vec{y}_J, z) &\lesssim \Big( \sum_{j \in J} |y_j - z| \Big)^{-(|J|-1) + 2\sigma}\;,\\
    \sS_\sigma^{(|J|+2)} (\vec{y}_J, z_1, z_2) &\lesssim \Big( \sum_{j \in J} |y_j - z_1| \Big)^{-|J| + 2\sigma}\;,
    \end{split}
\end{equation*}
which imply \eqref{e:partition_1} and \eqref{e:partition_2} directly. The choice of $\delta$ is to make the above two singularities in $\vec{y}_J$ integrable. These are also valid for $J = \emptyset$ or $|J|=1$. The bound \eqref{e:partition_0} follows from \eqref{e:partition_1} by taking out any single element from $J$ and applying \eqref{e:partition_1} to integrate separately. 
\end{proof}

\begin{lem} \label{lem:Holder}
    For every $\lambda > 0$ and $p \in[1,+\infty)$ , we have
    \begin{equation*}
        \left\| \int_{\RR^2} F(\cdot\,, \vec{z}) \rho_\eps (z_1, z_2) \, \md \vec{z} \right\|_{\lL_\lambda^p (\RR^n)}^p \lesssim \int_{\RR^{n+2}} e^{p \lambda |\vec{y}|} |F(\vec{y})|^p |\rho_\eps (y_{n+1}, y_{n+2})| \, \md \vec{y}\;.
    \end{equation*}
\end{lem}
\begin{proof}
    By Lemma~\ref{lem:rho_boundedness}, for $\lambda>0$, $e^{-\lambda |\vec{z}|} |\rho_\eps(\vec{z})| \in \lL^1(\RR^2)$. Hence, Jensen's inequality applied to the finite measure $e^{-\lambda |\vec{z}|} |\rho_\eps(\vec{z})| \,\md \vec{z}$ implies
    \begin{equation*}
        \Big| \int_{\RR^2} F(\vec{x}, \vec{z}) \rho_\eps (\vec{z}) \, \md \vec{z} \, \Big| \lesssim \int_{\RR^2} |F(\vec{x}, \vec{z})|^{p} e^{p \lambda |\vec{z}|} |\rho_\eps(\vec{z})| \, \md \vec{z}\;.
    \end{equation*}
    The conclusion then follows by writing $\vec{y} = (\vec{x}, \vec{z}) \in \RR^{n+2}$. 
\end{proof}

We have the following theorem regarding derivatives of $\psi^{\eps,\ell}$.

\begin{thm} \label{thm:derivativesOfPsik}
Recall the definition of $\beta_\ell$ from \eqref{e:beta_ell}. For every $\ell\geq 1$, the recursive definition \eqref{e:psiellt} is well-posed in $\vV_{\beta_{\ell-1}}$. Furthermore, each $\psi^{\eps,\ell}: \vV_{\beta_{\ell-1}} \rightarrow \RR$ is infinitely Fr\'echet differentiable, and $D^n\psi^{\eps,\ell}(v)$ has a kernel for every $n\in\NN$ and $v\in\vV_{\beta_{\ell-1}}$ (still denoted by $D^n\psi^{\eps,\ell}(v)$). Moreover, the following properties hold for all $\ell \geq 1$ and $n \geq 0$. 
    \begin{enumerate}
        \item There exist $\lambda = \lambda_{\ell,n} > 0$ and $\delta = \delta_{\ell,n}>0$ such that for every $p\in[1,1+\delta)$, we have
        \begin{equation}\label{e:boundednessOfDpsik}
            \sup_{\eps\in(0,1)}\sup_{v\in\vV_{\beta_{\ell-1},0}} \Vert D^n\psi^{\eps,\ell}(v)\Vert_{\lL^p_{\lambda}(\RR^n)} < \infty\;.
        \end{equation}
        
        \item We have the bound
        \begin{equation*}
            \Vert D^n\psi^{\eps,\ell}(v)-D^n\psi^{\eps,\ell}(m)\Vert _{\lL^1_{\lambda}(\RR^n)} \lesssim \dist(v,\mM)
        \end{equation*}
        uniformly over all $v\in \vV_{\beta_{\ell-1},0}$.
        
        \item $\psi^{\eps,\ell}$ is translation invariant and ``odd"; that is, $\psi^{\eps,\ell}(\emS_\theta v) = \psi^{\eps,\ell}(v)$ and $\psi^{\eps,\ell}(\emR v) = -\psi^{\eps,\ell}(v)$. 
        
        \item For every $\theta \in \RR$, we have the identities $\psi^{\eps,\ell}(m_\theta) \equiv 0$, and
        \begin{equation}\label{e:cancellation}  
        \sum_{P\in\emP([n])} \Big\langle D^{|P|}\psi^{\eps,\ell}(m_{\theta})\,, \; \big(m^{(|B|)}_{\theta} \big)_{B\in P}\Big\rangle = 0\;, \quad n \geq 1\;.
        \end{equation}
    \end{enumerate}
\end{thm}
\begin{proof}
We prove by induction on $\ell$, starting with $\ell=1$. By Proposition~\ref{prop:derivativesOfZeta} and the expression \eqref{e:rho}, $\psi^{\eps,1}(v) = \langle D^2 \zeta(v), \rho_\eps \rangle$ is well-defined and infinitely Fr\'echet differentiable in $\vV_\beta$. Furthermore, its $n$-th Fr\'echet derivative $D^n \psi^{\eps,1}$ has a kernel which equals
\begin{equation*}
    D^n \psi^{\eps,1} (\vec{y}; v) = \int_{\RR^2} D^{n+2} \zeta (\vec{y}, y_{n+1}, y_{n+2}; v) \rho_\eps (y_{n+1}, y_{n+2}) \, \md y_{n+1}\,\md y_{n+2}\;, \quad \vec{y} \in \RR^n\;.
\end{equation*}
For Assertion 1, for fixed $n$, take $\sigma>0$ such that $K_\sigma>n+2$. By Lemma~\ref{lem:Holder} and Proposition~\ref{prop:derivativesOfZeta}, we have
\begin{equation*}
    \|D^n \psi^{\eps,1}(v)\|_{\lL_\lambda^p(\RR^n)}^p \lesssim \int_{\RR^{n+2}} e^{p \lambda |\vec{y}|} \, |\Lambda_v^{(n+2)}(\vec{y})|^p \, |\rho_\eps(y_{n+1}, y_{n+2})| \,\md \vec{y}\;,
\end{equation*}
where $\vec{y}$ now denotes the integration variable in $\RR^{n+2}$. Using the bound for $\Lambda_v^{(n+2)}$ in \eqref{e:decomposition-1}, we deduce that for every $p \geq 1$ and every sufficiently small $\lambda$ depending on $n$, there exists $\lambda'>0$ such that
\begin{equation*}
    \|D^n \psi^{\eps,1}(v)\|_{\lL_\lambda^p(\RR^n)}^p
    \lesssim \sum_{P \in \emP([n+2])} \int_{\RR^{n+2}} e^{-p \lambda' |\vec{y}|} \Big( \prod_{B \in P} \sS_\sigma^{(|B|)} (\vec{y}_B) \Big)^p |\rho_\eps(y_{n+1}, y_{n+2})| \,\md \vec{y}\;.
\end{equation*}
By Lemma~\ref{lem:Lp1_inter_bound}, we conclude there exist $\lambda$ and $\delta$ depending on $n$ such that \eqref{e:boundednessOfDpsik} holds for $p \in [1, 1+\delta)$ and $\ell=1$. 

Assertion 3 then follows from Assertion 2 in Proposition~\ref{prop:derivativesOfZeta}. For Assertion 4, the case $n=0$ (that is, $\psi^{\eps,1}(m_\theta) \equiv 0$) follows from Assertion 3 and that $\emR m = m$, and the identity \eqref{e:cancellation} for $n \geq 1$ follows from differentiating $\psi^{\eps,1}(m_\theta) = 0$ in $\theta$. For Assertion 2, we proceed similarly as the proof in Proposition~\ref{prop:derivativesOfZeta} to get
\begin{equation*}
    |D^n \psi^{\eps,1}(\vec{y}; v) - D^n \psi^{\eps,1}(\vec{y}; m)| \lesssim \Big( \sup_{u \in \vV_{\beta,K}} \|D^{n+1} \psi^{\eps,1} (\vec{y},\cdot\,;u)\|_{\lL^1(\RR)} \Big) \cdot \dist(v, \mM)\;.
\end{equation*}
By the bound in Assertion 1 with $p=1$ and translation invariance, we get
\begin{equation*}
    \|D^n \psi^{\eps,1}(v) - D^n \psi^{\eps,1}(m)\|_{\lL_\lambda^1(\RR^n)} \lesssim \dist(v, \mM)\;.
\end{equation*}
This finishes the proof for $\ell=1$.

Now suppose all the assertions hold for $\psi^{\eps,\ell}$, and we show that they also hold for $\psi^{\eps,\ell+1}$. The induction hypothesis in particular implies that for every $n$ and every $t>0$, $D^{n} \psi^{\eps,\ell}_t$ is well-defined and has a kernel in $\lL^1(\RR^n)$. Assuming one can interchange the Fr\'echet differentiation and integration in time, then $\psi^{\eps,\ell+1}$ is indeed well-defined and infinitely Fr\'echet differentiable with
\begin{equation} \label{e:derivative_psi_expression}
    D^{n} \psi^{\eps,\ell+1} (v) = \int_{0}^{+\infty} \langle D^{n+2} \psi_t^{\eps,\ell}(v), \, \rho_\eps \rangle \, \md t\;.
\end{equation}
We will discuss in Remark~\ref{rmk:derivative_psi_max_integrable} later why the interchange of differentiation and integration is valid. For the moment, we assume the expression \eqref{e:derivative_psi_expression}, and we need to control the integrand $\|\langle D^{n+2} \psi^{\eps,\ell}_t(v), \, \rho_\eps \rangle\|_{\lL_\lambda^p(\RR^n)}$. Using Lemma~\ref{lem:Holder}, we have 
\begin{equation} \label{e:derivative_psi_Lp_bound}
    \|\langle D^{n+2} \psi^{\eps,\ell}_t(v), \, \rho_\eps \rangle\|_{\lL_\lambda^p(\RR^n)}^p \lesssim \int_{\RR^{n+2}} e^{p \lambda |\vec{y}|} \, \big| D^{n+2} \psi_t^{\eps,\ell}(\vec{y}; v) \big|^p \, \big|\rho_\eps (y_{n+1} , y_{n+2})\big| \,\md \vec{y}\;.
\end{equation}
By chain rule in Lemma~\ref{lem:chainRule_n}, the kernel of $D^{n+2} \psi^{\eps,\ell}_t = D^{n+2} \big( \psi^{\eps,\ell} \circ F^t \big)$ has the expression
\begin{equation} \label{e:derivative_psi_kernel_sum}
    D^{n+2} \psi_t^{\eps,\ell}(\vec{y}; v) = \sum_{P \in \emP([n+2])} \iI_{P}(t,\vec{y}; v)\;,
\end{equation}
where
\begin{equation} \label{e:I_P_expression}
    \iI_{P}(t,\vec{y}; v) = \int_{\RR^{|P|}} D^{|P|}\psi^{\eps,\ell}\big(\vec{z};F^t(v)\big)\prod_{B\in P} p_t^{(|B|)}(\vec{y}_B,z_B;v) \,\md \vec{z}\;.
\end{equation}
Here, we write $\vec{z} = (z_B)_{B \in P} \in \RR^{|P|}$. We control \eqref{e:derivative_psi_Lp_bound} separately according to whether $t \geq 1$ or $t<1$. 

We start with $t \geq 1$. Note that neither $D^{|P|} \psi^{\eps,\ell}\big(F^t(v)\big)$ nor $p_t^{(k)}(v)$ has decay in time. Hence one does not expect to get decay in time for \eqref{e:derivative_psi_Lp_bound} if one naively controls $\iI_P(t, \vec{y}; v)$ by putting absolute value in the integrand in \eqref{e:I_P_expression}. Fortunately, it turns out that the integrand in \eqref{e:I_P_expression} can be decomposed into a constant-in-time part and an exponential-decay-in-time part. The first part turns out to be $0$ after integrating out the $\vec{z}$ variable and summing over $P\in\emP([n+2])$, while the other part has sufficient decay in time to make \eqref{e:derivative_psi_Lp_bound} well-behaved. 

We now start to describe the decomposition. For every $P \in \emP([n+2])$ and every $B \in P$, write
\begin{equation*}
    \begin{split}
    D^{|P|} \psi^{\eps,\ell} \big(\vec{z}; F^t(v)\big) &= D^{|P|} \psi^{\eps,\ell}(\vec{z}; m) + \Big( D^{|P|} \psi^{\eps,\ell} \big(\vec{z}; F^t(v)\big) - D^{|P|} \psi^{\eps,\ell}(\vec{z}; m) \Big)\;,\\
    p_t^{(|B|)} (\vec{y}_B, z_B; v) &= \Big( p_t^{(|B|)} (\vec{y}_B, z_B; v) - \hat{p}_t^{(|B|)} (\vec{y}_B, z_B; v) \Big) + \hat{p}_t^{(|B|)} (\vec{y}_B, z_B; v)\;,
    \end{split}
\end{equation*}
where we recall $\hat{p}_t^{(k)}$ from Section~\ref{sec:kernel} and that $\hat{p}_t^{(1)} = \hat{p}_t$. We split $\iI_P(t,\vec{y};v)$ by
\begin{equation*}
    \iI_P (t,\vec{y}; v) = \iI_P^{(1)} (\vec{y}; v) + \iI_P^{(2)} (t,\vec{y}; v)\;,
\end{equation*}
where
\begin{equation*} 
    \iI_P^{(1)} (\vec{y};v) = \int_{\RR^{|P|}} D^{|P|} \psi^{\eps,\ell}(\vec{z}; m) \prod_{B \in P} \Big( p_t^{(|B|)} (\vec{y}_B, z_B; v) - \hat{p}_t^{(|B|)} (\vec{y}_B, z_B; v) \Big) \,\md \vec{z}\;,
\end{equation*}
and $\iI_P^{(2)}(t,\vec{y};v)$ is the sum of the terms where the integrand consists of products of the rest. Note that $\iI_P^{(1)}$ does not depend on $t$ since $p_t^{(|B|)} - \hat{p}_t^{(|B|)}$ does not, and hence does not have any decay in time. By the decompositions for $p^{(|B|)}_t$ in \eqref{e:decomposition1} and \eqref{e:decomposition}, for every $P \in \emP([n+2])$, we have
\begin{equation*}
    \iI_{P}^{(1)} (\vec{y};v) = \bigg\langle D^{|P|}\psi^{\eps,\ell}(m), \; \Big(\sum_{Q_B\in\emP(B)}\Big(\prod_{J\in Q_B}\Lambda_v^{(|J|)}(\vec{y}_{J})\Big)m^{(|Q_B|)}\Big)_{B\in P}\bigg\rangle\;.
\end{equation*}
Summing over $P \in \emP([n+2])$ and applying Lemma~\ref{lem:prod_sum_prod} and Assertion 4 for $\ell$ (from induction hypothesis), we get
\begin{equation} \label{e:Lp_cancellation}
\begin{aligned}
    &\sum_{P \in \emP([n+2])} \iI_{P}^{(1)} (\vec{y};v)\\
    =&\sum_{P^*\in \emP([n+2])}\bigg[\Big(\prod_{B^*\in P^*}\Lambda_v^{(|B^*|)}(\vec{y}_{B^*})\Big)\Big( \sum_{P\in\emP(P^*)}\bracket{D^{|P|}\psi^{\eps,\ell}(m), \big(m^{(|P'|)}\big)_{P'\in P}}\Big)\bigg]\\
    =&\;0\;.
\end{aligned}
\end{equation}
We now turn to the part involving $\iI_P^{(2)}$, for which we do not need to explore cancellations. The integrand for $\iI_P^{(2)}$ is a linear combination of products of $|P|+1$ terms, each term containing at least a factor $\hat{p}_t^{(|B|)}$ for some $B \in P$, or the factor $D^{|P|} \psi^{\eps,\ell} \big(\vec{z}; F^t(v)\big) - D^{|P|} \psi^{\eps,\ell}(\vec{z}; m)$. By Assertions 1 and 2 for $\ell$, there exists $\lambda'$ depending on $n$ and $\ell$ such that 
\begin{equation*}
    \begin{split}
    \| (D^{|P|} \psi^{\eps,\ell})(m) \|_{\lL_{\lambda'}^{1}(\RR^{|P|})} &\lesssim 1\;,\\
    \| (D^{|P|} \psi^{\eps,\ell})\big(F^t(v)\big) - D^{|P|} \psi^{\eps,\ell}(m) \|_{\lL_{\lambda'}^1(\RR^{|P|})} &\lesssim \dist \big( F^t(v), \mM \big) \lesssim e^{-c^*t}\;,\\
    \end{split}
\end{equation*}
where the last exponential decay follows from Proposition~\ref{pr:LinftyEC}. We can further assume $\lambda' \leq \bar{\lambda}$ for the $\bar{\lambda}$ in Lemma~\ref{lem:decomposition1}. Hence, by Lemma~\ref{lem:decomposition1} and Proposition~\ref{pr:decomposition}, for the same $\lambda'$, there exists $c \in (0,c^*)$ depending on $n$ such that
\begin{equation*}
    \begin{split}
    \|p_{t}^{(B)}(\vec{y}_B, \cdot\,; v) - \hat{p}_{t}^{(B)}(\vec{y}_B, \cdot\,; v)\|_{\lL^\infty_{-\lambda'}} &\lesssim e^{-c\lambda'|\vec{y}_B|} \sum_{Q_B \in \emP(B)} \prod_{J \in Q_B} \sS_\sigma^{(|J|)}(\vec{y}_J)\;,\\
    \|\hat{p}_{t}^{(B)}(\vec{y}_B, \cdot\,; v)  \|_{\lL_{-\lambda'}^\infty} &\lesssim e^{-ct} e^{-c\lambda'|\vec{y}_B|} \sum_{Q_B \in \emP(B)} \prod_{J \in Q_B} \sS_\sigma^{(|J|)}(\vec{y}_J)\;,
    \end{split}
\end{equation*}
where we used that all derivatives of $m$ are uniformly bounded and that $t\geq 1$. Taking products over $B \in P$, applying Lemma~\ref{lem:prod_sum_prod_S}, and noticing that each product contains at least one exponential decay factor in time, we get the pointwise bound
\begin{equation} \label{e:IP_2_bound}
    |\iI_P^{(2)}(t,\vec{y};v)| \lesssim e^{-ct} e^{-c \lambda' |\vec{y}|} \sum_{P' \in \emP([n+2])} \prod_{B'\in P'}\sS_\sigma^{(|B'|)}(\vec{y}_{B'})
\end{equation}
for every $P \in \emP([n+2])$. Combining \eqref{e:derivative_psi_kernel_sum}, \eqref{e:Lp_cancellation} and \eqref{e:IP_2_bound}, we get
\begin{equation} \label{e:derivative_psi_large_time_pointwise}
    \big| D^{n+2} \psi^{\eps,\ell}_t (v) \big| \lesssim e^{-ct} e^{-c \lambda' |\vec{y}|} \sum_{P' \in \emP([n+2])} \prod_{B'\in P'}\sS_\sigma^{(|B'|)}(\vec{y}_{B'})
\end{equation}
uniformly over $\eps\in(0,1)$, $v \in \vV_{\beta_\ell,0}$ and $t \geq 1$. Plugging it back to \eqref{e:derivative_psi_Lp_bound} and applying Lemma~\ref{lem:Lp1_inter_bound}, we see that if $\lambda, \delta>0$ are sufficiently small (both depending on $n$ and $\ell$) and $p \in [1,1+\delta)$, we have 
\begin{equation} \label{e:Lp_large_time}
    \sup_{\eps \in (0,1)} \sup_{v \in \vV_{\beta_\ell,0}} \| \langle D^{n+2} \psi_t^{\eps,\ell}(v), \, \rho_\eps \rangle \|_{\lL_\lambda^p(\RR^n)} \lesssim e^{-ct}\;, \qquad t \geq 1\;.
\end{equation}

We now turn to the range $t \in [0,1]$. Here the problem is the singularity near $t \approx 0$. In particular, the factor $p_t^{(1)} = p_t$ has a singularity of order $t^{-\frac{1}{2}}$, and a product of two of them will create a non-integrable singularity in time. Hence, instead of exploring cancellation as in \eqref{e:Lp_cancellation}, we shall utilize the exponential decay in $\vec{y}$ from $\iI_P(t, \vec{y}; v)$ to exchange with singularity in time. More precisely, using Minkowski inequality and Lemma~\ref{lem:Holder}, for every $\lambda>0$ and $p \geq 1$, we have
\begin{equation*}
    \|\langle D^{n+2} \psi_t^{\eps,\ell}(v), \rho_\eps \rangle \|_{\lL_\lambda^p(\RR^n)} \lesssim \sum_{P \in \emP([n+2])} \int_{\RR^{|P|}} \big| D^{|P|} \psi^{\eps,\ell} \big( \vec{z}; F^t(v) \big) \big| \cdot  \big| \Gamma_{P}(t,\vec{z};v) \big|^{\frac{1}{p}} \,\md \vec{z}\;,
\end{equation*}
where
\begin{equation} \label{e:Gamma}
    \Gamma_{P}(t,\vec{z};v) = \int_{\RR^{n+2}} e^{p \lambda |\vec{y}|} \Big( \prod_{B \in P} \big| p_t^{(|B|)}(\vec{y}_B, z_B; v) \big|^p \Big) \, \big|\rho_\eps (y_{n+1} , y_{n+2})\big| \,\md \vec{y}\;.
\end{equation}
Here, $\vec{z} = (z_B)_{B \in P} \in \RR^{|P|}$. $\Gamma_{P}$ also depends on $\eps$, $\lambda$ and $p$ but we omit them in the notation for simplicity. To control $\Gamma_{P}(t, \vec{z}; v)$, we distinguish blocks $B \in P$ by whether it contains $n+1$ or $n+2$. 

We first consider blocks $B \in P$ that contain neither $n+1$ nor $n+2$. If $|B|=1$, we apply the Gaussian bound in Lemma~\ref{lem:Gaussian_small_time} to $p_t$. If $|B| \geq 2$, we apply \eqref{e:bound} to $p_t^{(|B|)}$ and then \eqref{e:partition_0} for integration over $\vec{y}_B$. In this way, we see there exists $C>0$ such that for all sufficiently small $\lambda, \delta>0$ and $p \in [1,1+\delta)$, we have
\begin{equation} \label{e:bounds_pure_blocks}
    \begin{split}
    \int_{\RR} e^{p \lambda |y_B|} \big| p_t(y_B, z_B; v) \big|^p \,\md y_B &\lesssim t^{-\frac{p-1}{2}} e^{C \lambda |z_B|}\;, \qquad |B|=1\;,\\
    \int_{\RR^{|B|}} e^{p \lambda |\vec{y}_B|} \big| p_t^{(|B|)}(\vec{y}_B, z_B; v) \big|^p \,\md \vec{y}_B &\lesssim t^{-2^{|B|} p \sigma} e^{C \lambda |z_B|}\;, \qquad |B| \geq 2\;.
    \end{split}
\end{equation}
We now turn to the blocks that contain $n+1$ or $n+2$. If $B$ contains both $n+1$ and $n+2$, we again use \eqref{e:bound} to control $p_t^{(|B|)}$ and then Lemma~\ref{lem:Lp1_inter_bound} to control the corresponding integral in $\vec{y}_B$. Then, we deduce that there exists $C>0$ such that for all sufficiently small $\lambda, \delta>0$ and $p \in [1,1+\delta)$, we have
\begin{equation*}
    \int_{\RR^{|B|}} e^{p \lambda |\vec{y}_B|} \big| p_t^{(|B|)}(\vec{y}_B, z_B; v) \big|^p \, \big|\rho_\eps (y_{n+1} , y_{n+2})\big| \,\md \vec{y}_B \lesssim t^{-2^{|B|} p \sigma} e^{C\lambda |z_B|}\;. 
\end{equation*}
We finally turn to the situation of two different blocks $B_1$ and $B_2$ containing $n+1$ and $n+2$ respectively. Their contribution to $|\Gamma_P(t, \vec{z}; v)|$ in \eqref{e:Gamma} is
\begin{equation} \label{e:annoying_blocks}
    \int_{\RR^{|B_1 \cup B_2|}} e^{p \lambda (|\vec{y}_{B_1}| + |\vec{y}_{B_2}|)} \Big( \prod_{i=1}^{2} \big| p_t^{(|B_i|)}(\vec{y}_{B_i}, z_{B_i}; v) \big|^p \Big) \, \big|\rho_\eps(y_{n+1} , y_{n+2})\big| \,\md \vec{y}_{B_1} \,\md \vec{y}_{B_2}\;.
\end{equation}
If $|B_1| \geq 2$ and $|B_2| \geq 2$, then similar as above, using \eqref{e:bound}, the expression \eqref{e:annoying_blocks} can be bounded by $t^{-2^{(|B_1| + |B_2|)} p \sigma} e^{C\lambda (|z_{B_1}| + |z_{B_2}|)}$ if $\lambda$ is sufficiently small. 

The situation is subtler if at least one of the two blocks have cardinality $1$. We assume without loss of generality that $|B_1|=1$. Using the Gaussian bound in Lemma~\ref{lem:Gaussian_small_time}, we first integrate out the variable $\vec{y}_{B_1} = y_{n+1}$ in \eqref{e:annoying_blocks} to get
\begin{equation*}
    \begin{split}
    &\phantom{111}\int_\RR e^{p \lambda |y_{n+1}|} |p_t (y_{n+1}, z_{B_1}; v)|^p |\rho_\eps(y_{n+1}, y_{n+2})| \, \md y_{n+1}\\
    &\lesssim t^{-\frac{p}{2}} e^{C\lambda |z_{B_1}|} \int_{\RR} e^{-p\lambda |y_{n+1}|} |\rho_\eps (y_{n+1}, y_{n+2})| \, \md y_{n+1} \lesssim t^{-\frac{p}{2}} e^{C\lambda(|z_{B_1}| + |y_{n+2}| )}\;,
    \end{split}
\end{equation*}
where the last bound follows from that $\rho_\eps$ is a function of $y_{n+1}-y_{n+2}$ and Lemma~\ref{lem:theta_boundedness}. Integrating out $y_{n+2}$ and using \eqref{e:bounds_pure_blocks}, we see that if $|B_1|=1$, \eqref{e:annoying_blocks} can be controlled by
\begin{equation*}
    t^{-\frac{p}{2}} \big( t^{-\frac{p-1}{2}} \mathbf{1}_{|B_2|=1} + t^{- 2^{|B_2|} p \sigma} \mathbf{1}_{|B_2| \geq 2} \big) \, e^{C\lambda |z_{B_2}|}\;.
\end{equation*}
Combining the above, we conclude that
\begin{equation*}
    \big| \Gamma_P(t,\vec{z}; v) \big| \lesssim t^{-p+\frac{1}{2} - 2^{n+2} p \sigma} e^{C\lambda |\vec{z}|}
\end{equation*}
for a possibly different $C$. Note that all the bounds above are uniform in $\eps \in (0,1)$ and $v \in \vV_{\beta_\ell,0}$. Hence if $C \lambda$ and $\delta>0$ are sufficiently small to apply Assertion 1 for $\ell$, $p \in [1, 1+\delta)$, and $\sigma>0$ sufficiently small depending on $p$, we deduce there exists $\sigma'\in(0,1)$ such that 
\begin{equation} \label{e:Lp_small_time}
    \sup_{\eps \in (0,1)} \sup_{v \in \vV_{\beta_\ell,0}} \|\langle D^{n+2} \psi_t^{\eps,\ell}(v), \rho_\eps \rangle \|_{\lL_\lambda^p(\RR^n)} \lesssim t^{-\sigma'}\;, \quad t \leq 1\;.
\end{equation}
Then, combining \eqref{e:Lp_large_time} and \eqref{e:Lp_small_time}, we conclude Assertion 1 for $\ell+1$. The other three assertions for $\ell+1$ follow from Assertion 1 for $\ell+1$ in the same way as the case for $\ell=1$. This completes the induction and the proof of the theorem. 
\end{proof}

\begin{rmk} \label{rmk:derivative_psi_max_integrable}
By Lemma~\ref{lem:commutativity}, the interchange between differentiation and integration is valid if one can show that
\begin{equation} \label{e:derivative_psi_max_integrable}
    \int_{0}^{+\infty} \sup_{v\in\vV_{\beta_{\ell}}}\|D^{n} \psi^{\eps,\ell}_t (v)\|_{\lL^1(\RR^n)} \, \md t < +\infty
\end{equation}
for every $n \geq 0$. We now briefly explain how one can adopt the arguments above almost directly to prove \eqref{e:derivative_psi_max_integrable}. One needs to prove the bounds \eqref{e:Lp_large_time} and \eqref{e:Lp_small_time} with $\|\langle D^{n+2} \psi^{\eps,\ell}_t(v), \rho_\eps \rangle\|_{\lL^p_\lambda(\RR^n)}$ replaced by
\begin{equation*}
    \|D^n \psi^{\eps,\ell}_t(v)\|_{\lL^1(\RR^n)} =\int_{\RR^n} \big| D^n \psi_t^{\eps,\ell}(\vec{y}; v) \big| \, \md \vec{y}\;.
\end{equation*}
For $t \geq 1$, the pointwise bound \eqref{e:derivative_psi_large_time_pointwise} still holds for $D^{n}\psi^{\eps,\ell}_t (\vec{y}; v)$ with $n+2$ replaced by $n$. For $t<1$ and $P \in \emP([n])$, the corresponding version of $\Gamma_P(t, \vec{z}; v)$ in \eqref{e:Gamma} becomes
\begin{equation*}
    \Gamma_P(t, \vec{z}; v) = \int_{\RR^n} \Big( \prod_{B \in P} | p_t^{(|B|)} (\vec{y}_B, z_B; v)|\Big) \, \md \vec{y} = \prod_{B \in P} \int_{\RR^{|B|}} \big| p_t^{(|B|)}(\vec{y}_B, z_B; v) \big| \, \md \vec{y}_B\;,
\end{equation*}
and the desired bound for $\Gamma_P(t, \vec{z}; v)$ follows immediately. Thus, one gets the same bounds as \eqref{e:Lp_large_time} and \eqref{e:Lp_small_time} and proves \eqref{e:derivative_psi_max_integrable}. 
\end{rmk}

\subsection{Proof of Theorem~\ref{thm:derivativesOfPsicork}}

We now give one final ingredient to prove Theorem~\ref{thm:derivativesOfPsicork}. 

\begin{lem}\label{lem:derivativesOfPsibark}
Recall $\beta_\ell$ from \eqref{e:beta_ell}. For every $\ell\geq 0$, $\bar{\psi}^{\eps,\ell}:\vV_{\beta_\ell}\to\RR$ is well-defined and twice Fr\'echet differentiable. Both $D\bar{\psi}^{\eps,\ell}(v)$ and $D^2\bar{\psi}^{\eps,\ell}(v)$ have kernels for every $v\in\vV_{\beta_\ell}$ (still denoted by $D\bar{\psi}^{\eps,\ell}(v)$ and $D^2\bar{\psi}^{\eps,\ell}(v)$). Moreover, there exists $\lambda>0$ such that one has the pointwise bounds
    \begin{equation} \label{e:Dpsikt}
        \big| D \bar{\psi}^{\eps,\ell}(y;v) \big| \lesssim e^{-\lambda |y-\zeta(v)|}\;,
    \end{equation}
    and
    \begin{equation} \label{e:D2psikt}
        \big| D^2 \bar{\psi}^{\eps,\ell}(y_1, y_2; v) \big| \lesssim e^{-\lambda \, ( \, |y_1 - \zeta(v)| + |y_2 - \zeta(v)| \, )}\;.
    \end{equation}
\end{lem}
\begin{proof}
The $\ell =0$ case follows directly from Proposition~\ref{prop:derivativesOfZeta}. We focus on  $\ell\geq 1$. Assertions 2 and 4 with $n=0$ in Theorem~\ref{thm:derivativesOfPsik} imply $|\psi^{\eps,\ell}(v)|\lesssim \dist(v,\mM)$ uniformly over all $v\in\vV_{\beta_{\ell-1},0}$. Together with Assertion 3 in Theorem~\ref{thm:derivativesOfPsik}, this can be extended to all $v\in\vV_{\beta_{\ell-1}}$. Hence combining with the exponential convergence \eqref{e:deterministic_flow_convergence}, we can conclude that $\bar{\psi}^{\eps,\ell}(v)$ is well-defined in $\vV_{\beta_\ell}$ and 
\begin{equation}\label{e:barpsik_bound}
    |\bar{\psi}^{\eps,\ell}(v)|\lesssim 1\;.
\end{equation}
Now we turn to \eqref{e:Dpsikt} and \eqref{e:D2psikt}. We give details for \eqref{e:D2psikt}. By translation invariance, we can assume without loss of generality that $\zeta(v) = 0$. Recall that
    \begin{equation*}
        \bar{\psi}^{\eps,\ell}(v) = \int_{0}^{+\infty} \psi_t^{\eps,\ell}(v) \,\md t\;.
    \end{equation*}
    By Remark~\ref{rmk:derivative_psi_max_integrable}, $\bar{\psi}^{\eps,\ell}$ is twice Fr\'echet integrable in $\vV_{\beta_\ell}$, and we can interchange the differentiation with integration to get
    \begin{equation*}
        D^2 \bar{\psi}^{\eps,\ell} (y_1, y_2; v) = \int_{0}^{+\infty} D^2 \psi^{\eps,\ell}_t (y_1, y_2; v) \, \md t\;.
    \end{equation*}
    We separate the integration into $t \geq 1$ and $t<1$. For $t \geq 1$, by \eqref{e:derivative_psi_large_time_pointwise}, we have
    \begin{equation} \label{e:D2psit_large_time}
        \big| D^2 \psi_t^{\eps,\ell} (y_1, y_2; v) \big| \lesssim e^{-ct} e^{-\lambda \,( \,|y_1|+|y_2| \,)}\;.
    \end{equation}
    For $t < 1$, the integrand can be expressed by
    \begin{equation*}
    \begin{split}
        D^2\psi^{\eps,\ell}_t(y_1,y_2;v) = &\int_{\RR^2} D^2\psi^{\eps,\ell}\big(z_1,z_2;F^t(v)\big) \prod_{i=1}^{2} p_t(y_i,z_i;v) \,\md z_1\,\md z_2 \\
        &+ \int_\RR D\psi^{\eps,\ell}\big(z;F^t(v)\big) \, p_t^{(2)}(y_1,y_2,z;v)\,\md z\;.
    \end{split}
    \end{equation*}
    Applying Assertion 1 in Theorem~\ref{thm:derivativesOfPsik}, Lemma~\ref{lem:bound1} and \eqref{e:bound2} in Lemma~\ref{lem:decomposition2}, we directly have
    \begin{equation} \label{e:D2psit_small_time}
    \begin{split}
        \vert D^2\psi^{\eps,\ell}_t(y_1,y_2;v)\vert \leq &\Vert (D^2\psi^{\eps,\ell})\big(F^t(v)\big) \Vert_{\lL^p_{\lambda}(\RR^2)}  \prod_{i=1}^{2} \Vert p_t(y_i,\cdot\,;v)\Vert_{\lL^{p'}_{-\lambda}}\\
        &+ \Vert (D\psi^{\eps,\ell})\big(F^t(v)\big)\Vert_{\lL^1_{\lambda}} \Vert p_t^{(2)}(y_1,y_2,\cdot\,;v)\Vert_{\lL^\infty_{-\lambda}}\\
        \lesssim & t^{-\frac{1}{p}}e^{-\lambda \,( \,|y_1| + |y_2| \,)\,}\;
    \end{split}
    \end{equation}
    for some $p>1$, where $p'$ satisfies $\frac{1}{p}+\frac{1}{p'}=1$. The bound \eqref{e:D2psikt} then follows by combining \eqref{e:D2psit_small_time} and \eqref{e:D2psit_large_time} and plugging them back to the expression of $D^2\bar{\psi}^{\eps,\ell}$. 

    The proof for the bound \eqref{e:Dpsikt} follows the same arguments but is only simpler, so we omit the details.
\end{proof}

We now have all the ingredients to prove Theorem~\ref{thm:derivativesOfPsicork}. 
\begin{proof}[Proof of Theorem~\ref{thm:derivativesOfPsicork}]
    By Proposition~\ref{prop:derivativesOfZeta} and Lemma~\ref{lem:derivativesOfPsibark},  $\bar{\psi}^{\eps,\ell}$ and $\psi_{cor}^{\eps,\ell}$ are well-defined and twice Fr\'echet differentiable, and the Fr\'echet derivatives have kernels for every $v \in \vV_{\beta_\ell}$. The assertion \eqref{e:psi_cor_eps_ell_bound} follows directly from \eqref{e:barpsik_bound}.

    For \eqref{e:Dpsi_cor_eps_ell_bound}, taking Fr\'echet derivative of $\psi^{\eps,\ell}_{cor}$, we get the expression
    \begin{equation*}
        D\psi^{\eps,\ell}_{cor}(v) = \frac{\eps^{2\ell\gamma}}{2^\ell}\Big((a^{2\ell}_\eps)'\big( \zeta(v) \big) \, \bar{\psi}^{\eps,\ell}(v) \, D\zeta(v) + a^{2\ell}_\eps\big( \zeta(v) \big) \, D\bar{\psi}^{\eps,\ell}(v)\Big)\;.
    \end{equation*}
    It then follows from \eqref{e:barpsik_bound}, \eqref{e:Dpsikt},  and the bound for $D \zeta$ in Proposition~\ref{prop:derivativesOfZeta} that there exists $\lambda > 0$ such that
    \begin{equation*}
        \vert D\psi^{\eps,\ell}_{cor}(y;v)\vert \lesssim \eps^{2\ell\gamma}e^{-\lambda|y-\zeta(v)|}\;.
    \end{equation*}
    Then \eqref{e:Dpsi_cor_eps_ell_bound} follows from Lemma~\ref{lem:rho_boundedness}. For \eqref{e:D2psi_cor_eps_ell_bound}, we have the expression
    \begin{equation*}
    \begin{aligned}
        D^2&\psi^{\eps,\ell}_{cor}(v) = \frac{\eps^{2\ell\gamma}}{2^\ell} \bigg( \bar{\psi}^{\eps,\ell}(v) \Big((a^{2\ell}_\eps)'\big( \zeta(v) \big) D^2\zeta(v) + (a^{2\ell}_\eps)''\big( \zeta(v) \big) D\zeta(v)\otimes D\zeta(v)\Big)\\ 
        &+(a^{2\ell}_\eps)'\big( \zeta(v) \big)\Big( D\zeta(v)\otimes D\bar{\psi}^{\eps,\ell}(v)  + D\bar{\psi}^{\eps,\ell}(v)\otimes D\zeta(v) \Big)+ a^{2\ell}_\eps\big( \zeta(v) \big) D^2\bar{\psi}^{\eps,\ell}(v) \bigg)\;.
    \end{aligned}
    \end{equation*}
    This time, using the bounds for $D\zeta$ and $D^2 \zeta$ in Proposition~\ref{prop:derivativesOfZeta}, and combining with \eqref{e:barpsik_bound}, \eqref{e:Dpsikt} and \eqref{e:D2psikt}, we get
    \begin{equation*}
        \vert D^2\psi^{\eps,\ell}_{cor}(y_1,y_2;v)\vert \lesssim \eps^{2\ell\gamma}e^{-\lambda \, ( \, |y_1 - \zeta(v)| + |y_2 - \zeta(v)| \, )}
    \end{equation*}
    for some $\lambda > 0$. Then \eqref{e:D2psi_cor_eps_ell_bound} follows from Lemma~\ref{lem:rho_boundedness}.
\end{proof}

\subsection{Proof of Theorem~\ref{thm:zeta_continuity_bound}}

\begin{lem}\label{lem:hatpm_regularity}
Recall $c^*>0$ from the beginning of Section~\ref{sec:linear_PDE} and the definition of $\hat{p}_t(\cdot\,,\cdot\,;m)$ from Lemma~\ref{lem:decomposition1}. For every $\alpha\in(0,1)$, we have
    \begin{equ}
        \|\hat{p}_t(y,\cdot\,;m)-\hat{p}_t(y',\cdot\,;m)\|_{\lL^2} \lesssim (1+t^{-\frac{1}{4}-\frac{\alpha}{2}})\,e^{-c^*t} |y-y'|^{\alpha} \;.
    \end{equ}
\end{lem}
\begin{proof}
    By the definition of $\hat{p}_t(\cdot\,,\cdot\,;m)$, we have
    \begin{equ}
    \hat{p}_t(y,\cdot\,;m)-\hat{p}_t(y',\cdot\,;m) = (e^{-t\aA} - \pP)(\delta_{y}-\delta_{y'})\;.
\end{equ}
By Proposition~\ref{prop:Linfty_decay}, the smoothing effect of $e^{t\Delta}$ and the boundedness of $f'$, we have
\begin{equ}
    \|e^{-t\aA}-\pP\|_{\hH^{-\frac{1}{2}-\alpha}\to\lL^2}\lesssim (1+t^{-\frac{1}{4}-\frac{\alpha}{2}})\,e^{-c^*t}\;.
\end{equ}
Meanwhile, we have
\begin{equ}
    \|\delta_{y}-\delta_{y'}\|_{\hH^{-\frac{1}{2}-\alpha}}^2 \lesssim \int_\RR \big(1+|\theta| \big)^{-1-2\alpha} |e^{2\pi i \theta y} - e^{2 \pi i \theta y'}|^2 \, \md \theta \lesssim|y-y'|^{2\alpha}\;,
\end{equ}
where we use the pointwise bound
\begin{equation*}
    |e^{2 \pi i \theta y} - e^{2 \pi i \theta y'}|^2 \lesssim \big(|\theta| \cdot |y-y'| \big)^2 \wedge 1
\end{equation*}
to get the bound for the integral in the last step. The conclusion then follows.
\end{proof}

\begin{lem}\label{lem:Dzeta_m_regularity}
$D\zeta(m)$ is a smooth function with all derivatives bounded. For $D^2 \zeta (m)$, for every $\alpha \in (0,1)$, we have
\begin{equ}
    \big|D^2\zeta(y_1,y_2;m)  - D^2\zeta(y'_1,y'_2;m)\big|\lesssim |y_1-y'_1|^{\alpha}+ |y_2-y'_2|^{\alpha}\;.
\end{equ}
\end{lem}
\begin{proof}
By \eqref{e:Dnzeta_identity} and \eqref{e:lambda_v-1_expression}, we have
\begin{equation*}
    D \zeta (m) = - \frac{m'}{\|m'\|_{\lL^2}}\;.
\end{equation*}
The claim for $D \zeta (m)$ then follows Lemma~\ref{lem:statationary_exponential_decay}. As for $D^2 \zeta (m)$, by \eqref{e:Dnzeta_identity} and \eqref{e:lambda_v-2_expression}, we have
\begin{align*}
    &D^2\zeta(y_1,y_2;m)  - D^2\zeta(y'_1,y_2;m)\\
    =& -\frac{1}{\|m'\|_{\lL^2}^2}\int_0^{+\infty} \int_\RR f''(m(z))\big(p_t(y_1,z) - p_t(y'_1,z)\big)p_t(y_2,z)m'(z)\,\md z\,\md t\;.
\end{align*}
Here, we omit the dependence of $p_t$ on $m$ for notational simplicity. By the decomposition \eqref{e:decomposition1}, we have
\begin{align*}
    \big(p_t(y_1,z) - &p_t(y'_1,z)\big)p_t(y_2,z) =  \big( m'(z) \big)^2  \big(\Lambda_m^{(1)}(y_1)-\Lambda_m^{(1)}(y'_1)\big)\Lambda_m^{(1)}(y_2)\\
    &+ m'(z)  \big(\Lambda_m^{(1)}(y_1)-\Lambda_m^{(1)}(y'_1)\big)\hat{p}_t(y_2,z) + \big(\hat{p}_t (y_1,z)-\hat{p}_t (y'_1,z)\big)p_t (y_2,z)\;.
\end{align*}
Note that $D^2\zeta(m)$ involves integration in time over $\RR^+$, but the first term on the right-hand side above has no decay in time. However, since $m$ is odd, integrating $z$ variable out implies this term is actually $0$. Hence, it follows from the fact that $\big|\Lambda_m^{(1)}(y_1)-\Lambda_m^{(1)}(y'_1)\big|\lesssim|y_1-y'_1|^\alpha$, Lemmas~\ref{lem:decomposition1} and~\ref{lem:hatpm_regularity} that
\begin{equation*}
    \big|D^2\zeta(y_1,y_2;m)  - D^2\zeta(y'_1,y_2;m)\big|\lesssim |y_1-y'_1|^\alpha\;.
\end{equation*}
Since $D^2\zeta(m)$ is symmetric in its two variables, this already implies the claim and hence completes the proof of the lemma. 
\end{proof}

Now we have all the ingredients to prove Theorem~\ref{thm:zeta_continuity_bound}.

\begin{proof}[Proof of Theorem~\ref{thm:zeta_continuity_bound}]
    A similar version is proved in \cite[Theorem~7.2, Theorem~7.3, Lemma~7.1]{Fun95}. The well-posedness of the expressions in \eqref{e:alpha} follows from Lemma~\ref{lem:Dzeta_m_regularity} and the translation properties of $D\zeta$ and $D^2\zeta$ (Assertion~2 in Proposition~\ref{prop:derivativesOfZeta}). The expressions for $\alpha_1$ and $\alpha_2$ involving the kernel $p_t(m)$ follow from \eqref{e:Dnzeta_identity}, \eqref{e:lambda_v-1_expression} and \eqref{e:lambda_v-2_expression}. Moreover, it follows from Assertion~3 in Proposition~\ref{prop:derivativesOfZeta} and Lemma~\ref{lem:rho_boundedness} that
    \begin{equation*}
    \Big| \int_{\RR^2}  \Big( \prod_{j=1}^{2} D \zeta (y_j; v)-\prod_{j=1}^{2} D \zeta (y_j; m_{\zeta(v)}) \Big) \cdot \rho_\eps (y_1, y_2) \,\md y_1 \,\md y_2  \Big| \lesssim \dist(v,\mM) \;,
\end{equation*}
\begin{equation*}
    \Big| \frac{1}{2} \int_{\RR^2} \Big(\sum_{j=1}^{2} \big(y_j- \zeta(v) \big) \Big) \cdot \Big(D^2\zeta(y_1,y_2;v)-D^2\zeta(y_1,y_2;m_{\zeta(v)})\Big) \cdot \rho_\eps (y_1,y_2) \, \md y_1 \,\md y_2 \Big| \lesssim\dist(v,\mM)\;.
\end{equation*}
Then the conclusion follows from the translation property in Assertion~2 in Proposition~\ref{prop:derivativesOfZeta}, the decay and H\"older continuity in $\vec{y}$ of $D\zeta(y_1;m)D\zeta(y_2;m)$ and $D^2\zeta(y_1,y_2;m)$ (from \eqref{e:Dzeta_uniform} and Lemma~\ref{lem:Dzeta_m_regularity}) and Lemma~\ref{lem:rho_convergence}.
\end{proof}

\subsection{Proof of Theorem~\ref{thm:functional_equation}}\label{sec:proof_of_functional_equation}

\begin{proof}[Proof of Theorem~\ref{thm:functional_equation}]
We start with \eqref{e:functional_equation}. For every $\ell\geq 1$, we have
\begin{equation*}
    - \psi^{\eps,\ell}(v) = \left.\frac{\md}{\md t}\right|_{t=0} \int_t^\infty \psi^{\eps,\ell}_s(v) \,\md s = \left.\frac{\md}{\md t}\right|_{t=0} \int_0^{+\infty} \psi^{\eps,\ell}_s\big(F^t(v)\big) \,\md s\;.
\end{equation*}
Since $v\in\cC^{5/2}$, the map $t\mapsto F^t(v)\in\lL^\infty$ is Fr\'echet differentiable. Then by chain rule and Remark~\ref{rmk:derivative_psi_max_integrable}, we have
\begin{equation*}
    \int_0^{+\infty} \sup_{t\in[0,\delta]}\left|\frac{\md}{\md t}\psi^{\eps,\ell}_s\big(F^t(v)\big)\right|\,\md s \lesssim \int_0^{+\infty} \sup_{u\in\vV_{\beta_\ell}} \|D\psi_s^{\eps,\ell}(u)\|_{\lL^1}\,\md s<+\infty
\end{equation*}
for sufficiently small $\delta$. Hence, we can interchange the differentiation and time integration in the expression for $-\psi^{\eps,\ell}(v)$ above to get
\begin{equation*}
    \begin{split}
    - \psi^{\eps,\ell}(v) &= \int_0^{+\infty} \left.\frac{\md}{\md t}\right|_{t=0}\psi^{\eps,\ell}_s\big(F^t(v)\big) \,\md s =\int_0^{+\infty} \bracket{D\psi^{\eps,\ell}_s(v), \Delta v + f(v)}\,\md s\\
    &= \bracket{D\bar{\psi}^{\eps,\ell}(v),\Delta v + f(v)}\;,
    \end{split}
\end{equation*}
where in the last step we again used Remark~\ref{rmk:derivative_psi_max_integrable} to interchange time integration and the Fr\'echet differentiation $D$. Combining this with the definition of $\psi^{\eps,\ell}$ \eqref{e:psiellt}, we can get \eqref{e:functional_equation} for $\ell\geq 1$. 

For $\ell=0$, we have $\psi^{\eps,0} = \zeta$. By definition of $\zeta$, we have $\zeta \big( F^t (v) \big) = \zeta (v)$. Differentiating this identity in 
$t$ and taking $t=0$, we obtain
\begin{equation*}
    \langle D \zeta(v), \; \Delta v + f(v)  \rangle = 0\;.
\end{equation*}
This completes the proof for \eqref{e:functional_equation}. 

Now we turn to \eqref{e:corrector_cancel_general}. The cases $\ell=0$ and $1$ follow directly from \eqref{e:functional_equation}. Now we suppose $\ell\geq 2$. Let $\phi_\eps := \frac{\eps^{2 \gamma}}{2} a_\eps^2$. By definition of $\psi_{cor}^{\eps,\ell}$, and applying \eqref{e:functional_equation} as well as the fact that $\bracket{D\zeta(v), \Delta v+f(v)} = 0$, we get
\begin{equation*}
    \begin{aligned}
    &\bracket{D\psi_{cor}^{\eps,\ell}(v), \Delta v + f(v)}+ \frac{1}{2}\eps^{2\gamma}a^2_\eps\big(\zeta(v)\big)\bracket{D^2\psi_{cor}^{\eps,\ell-1}(v),\rho_\eps}\\
    =& (\ell-1) \phi_\eps'  \phi_\eps^{\ell-1} \Big\langle \Big( D \zeta(v) \otimes D \bar{\psi}^{\eps,\ell-1}(v) +  D \bar{\psi}^{\eps,\ell-1}(v)\otimes D \zeta(v)+\bar{\psi}^{\eps,\ell-1}(v) D^2 \zeta (v) \Big),\rho_\eps \Big\rangle\\
    &+ (\ell-1) \phi_\eps^{\ell-2} \Big( \phi_\eps'' \phi_\eps  + (\ell-2) ( \phi_\eps')^2 \Big)  \bar{\psi}^{\eps,\ell-1}(v) \bracket{  D \zeta(v)\otimes D\zeta(v), \rho_\eps}\;,
\end{aligned}
\end{equation*}
where all powers and derivatives of $\phi_\eps$ are evaluated at $\zeta(v)$. Note that each appearance of $\phi_\eps$ (including its derivatives) gives a $\eps^{2\gamma}$ factor, with an additional $\eps^{\frac{1}{2}}$ from each differentiation. This gives the right power of $\eps$ to match the right-hand side of \eqref{e:corrector_cancel_general}. The conclusion then follows from the decay property of $D\zeta$, $D^2\zeta$ and $D\bar{\psi}^{\eps,\ell-1}$ and Lemma~\ref{lem:rho_boundedness}. 
\end{proof}

\section{Bounding the distance from $\mM$ -- proof of Theorem~\ref{thm:Linftycloseness}}
\label{sec:closeness}

We prove Theorem~\ref{thm:Linftycloseness} in this section. \cite{Fun95} proved an analogous $\lL^2$-based statement by energy estimates with regularization. In what follows, we take a different approach by dynamically decomposing the solution into a small linear part and a regular remainder, resetting the initial data of the linear part every time. This can be viewed as a dynamical version of the da Prato-Debussche trick, and has similar spirit in some recent works for global well-posedness of singular SPDEs (see \cite{MW17, HR23} for example). 

Throughout the section, we fix the small parameters $\kappa', \kappa_1$ and $\kappa_2$ such that 
\begin{equ}
    0 < \kappa_1 \ll \kappa_2 \ll \kappa' \ll (\gamma \wedge 1) \;.
\end{equ}
We also fix $\alpha^* \in (0, \frac{1}{5})$ and $p^* \in (\kappa_1^{-2}, +\infty)$ such that $\alpha^* p^* > 1$. The two ingredients are Lemmas~\ref{lem:GoodEvent} (based on Lemma~\ref{lem:smallnessOfLinearSolution}) and~\ref{lem:PerturbationVersion} below, from which Theorem~\ref{thm:Linftycloseness} follows. 

Let $X_{\eps}$ be the solution to
\begin{equation*}
    \partial_t X_\eps = \Delta X_\eps + \eps^{\gamma} a_\eps \dot{W}\;, \quad X_\eps[0] =0\;.
\end{equation*}
The following lemma proves the smallness of $X_{\eps}$ at a suitable time scale. 

\begin{lem}\label{lem:smallnessOfLinearSolution}
    For every $q \geq 1$, we have 
    \begin{equ}
        \EE \sup_{t \in [0, \eps^{-\kappa_1}]} \|X_{\eps}[t] \|_{\wW^{\alpha^*, p^*}}^q \lesssim_{q} \eps^{q(\gamma - 2\kappa_1)} \;.
    \end{equ}
\end{lem}
\begin{proof}
The result is standard, except that we are working on the whole real line with a cutoff in noise. We briefly explain how we address the bounds at spatial infinity. Let $q_t(x) = \frac{1}{\sqrt{4\pi t}} \exp(-\frac{x^2}{4t})$ be the heat kernel. We have the expression
\begin{equation*}
    X_\eps(t,x) = \eps^{\gamma}\int_0^t\int_\RR q_{t-s}(x-y)a_\eps(y)\dot{W}(s,y)\,\md y\,\md s\;.
\end{equation*}
With the same argument as in \cite[Proposition~3.2.2]{DSS24}, one can get
\begin{equation*}
    \EE \big| X_\eps(t,x)-X_\eps(t',x') \big|^2 \lesssim \eps^{2\gamma} \big( \sqrt{|t-t'|} + |x-x'| \big)
\end{equation*}
for all $(t, x)$, $(t', x') \in \RR^+ \times \RR$. On the other hand, since $a_{\eps}$ is supported on $[-\eps^{-1/2}, \eps^{-1/2}]$, we have
\begin{equs}
    \EE |X_{\eps}(t, x)|^2 &= \eps^{2\gamma} \int_0^t \int_\RR q_{t-s}^2(x-y) a_{\eps}^2(y) \,\md y \,\md s \\
    &\lesssim \eps^{2\gamma} \int_0^t \int_\RR \frac{1}{t-s} \cdot \frac{(t-s)^{3/2}}{|x-y|^3} \one_{|y| \leq \eps^{-1/2}} \,\md y \,\md s \lesssim \frac{\eps^{2(\gamma-\kappa_1)}}{\bracket{x}^2}
\end{equs}
for all $|x| \geq 2 \eps^{-1/2}$ and $t\in [0,\eps^{-\kappa_1}]$, where $\bracket{x} := (1 + |x|^2)^{\frac{1}{2}}$. Combining the above two bounds and using Nelson's estimate, for every $q \geq 2$, we get 
\begin{equ}
     \big\|X_\eps(t,x)-X_\eps(t',x')\big\|_{\lL_\omega^q} \lesssim_q \, \frac{\eps^{\gamma-\kappa_1}}{\bracket{x}^{\kappa_1}} \cdot \big( \sqrt{|t-t'|} + |x-x'| \big)^{\frac{1-\kappa_1}{2}}\;,
\end{equ}
which holds uniformly over all $(t,x)$ and $(t',x')$ such that $|t-t'| \leq 1$, $|x-x'| \leq 1$ and $t,t'\in[0,\eps^{-\kappa_1}]$. Thus, by the Kolmogorov continuity theorem (see \cite[Theorem~A.2.1]{DSS24}), for every $\alpha\in(0, \frac{1-\kappa_1}{2})$ and $q \geq 2$, we have
\begin{equ}
\label{e:Linear_object_Holder}
    \Big\| \sup_{\substack{(t, x), (t', x') \\
    \in B((t_0, x_0), 1)}} \frac{|X_{\eps}(t, x) - X_{\eps}(t', x')|}{\big(\sqrt{|t - t'|} + |x - x'|\big)^{\alpha}} \Big\|_{\lL_\omega^q}  \lesssim_{\alpha,q} \, \frac{\eps^{\gamma - \kappa_1}}{\bracket{x_0}^{\kappa_1}}
\end{equ}
for all $(t_0, x_0) \in [0,\eps^{-\kappa_1}] \times \RR$. Combining it with $X_{\eps}(0, x) = 0$, we get 
\begin{equ}
\label{e:linear_object_Lp}
    \Big\| \sup_{t \in [0, \eps^{-\kappa_1}]} \sup_{x \in B(x_0, 1)} |X_{\eps}(t, x)| \, \Big\|_{\lL_\omega^q} \lesssim_{\alpha, q} \frac{\eps^{\gamma - 2\kappa_1}}{\bracket{x_0}^{\kappa_1}}
\end{equ}
for all $x_0\in\RR$. Since \eqref{e:Linear_object_Holder} and \eqref{e:linear_object_Lp} hold for arbitrary $q$, one can again taking supremum over $x_0 \in \RR$ inside the $\|\cdot\|_{\lL_\omega^q}$ norm with the cost of $\bracket{x_0}^{-\kappa_1}$ factor. Hence, for every $\alpha \in (0, \frac{1-\kappa_1}{2})$ and $q \geq 2$, one gets 
\begin{equ}
\label{e:linear_object_Holder2}
    \Big\| \sup_{t \in [0, \eps^{-\kappa_1}]} \|X_{\eps}[t] \, \|_{\cC^{\alpha}} \Big\|_{\lL_\omega^q} \lesssim_{\alpha, q} \eps^{\gamma - 2\kappa_1}\;.
\end{equ}
Moreover, it follows from \eqref{e:linear_object_Lp} that for all $p > 1 + \frac{2}{\kappa_1}$, we have
\begin{equ}
\label{e:linear_object_Lp2}
    \Big\| \sup_{t \in [0, \eps^{-\kappa_1}]} \|X_{\eps}[t]\|_{\lL^{p}} \Big\|_{\lL_\omega^q} \lesssim_{p,q} \eps^{\gamma - 2\kappa_1}\;. 
\end{equ}
Interpolating \eqref{e:linear_object_Holder2} and \eqref{e:linear_object_Lp2} with suitable $\alpha$ and $p$ gives the desired result. 
\end{proof}

Now we perform an iterative argument over the time interval $[0,\eps^{-N}]$. For each half-integer $k \in \frac{1}{2} \ZZ \cap [0,\eps^{-N+\kappa_1}]$, define $X_\eps^{(k)}$ on the time interval $[k \eps^{-\kappa_1}, (k+1) \eps^{-\kappa_1}]$ by
\begin{equation*}
    \partial_t X_\eps^{(k)} = \Delta X_\eps^{(k)} +\eps^{\gamma} a_\eps \dot{W}\;, \qquad X_\eps^{(k)}[k \eps^{-\kappa_1}] =0\;.
\end{equation*}
Applying Lemma~\ref{lem:smallnessOfLinearSolution} to $X^{(k)}_\eps$ on the interval $[k \eps^{-\kappa_1}, (k+1) \eps^{-\kappa_1}]$ for each half integer $k \leq \eps^{-N+\kappa_1}$, we get the following lemma. 

\begin{lem}\label{lem:GoodEvent}
    For $N>0$, let $A_\eps^{(N)}$ be the event 
    \begin{equation}
    \label{e:good_event}
        A_\eps^{(N)}:=\bigg\{ \omega: \sup \limits_{\substack{t \in [k \eps^{-\kappa_1}, (k+1)\eps^{-\kappa_1}] \\ k\in\frac{1}{2}\ZZ \cap [0, \eps^{-N + \kappa_1}]}} \Vert X_\eps^{(k)}[t] \Vert_{\wW^{\alpha^*, p^*}} \leq  \eps^{\gamma - \kappa_2}\bigg\}\;.
    \end{equation}
    Then for every $N, N' > 0$, there exists $C>0$ such that
    \begin{equation*}
        \PP \big( A_\eps^{(N)} \big) > 1 - C \eps^{N'}
    \end{equation*}
    for all $\eps \in (0,1)$. 
\end{lem}

\begin{lem}\label{lem:SG_regularity}
Recall $c^*>0$ from the beginning of Section~\ref{sec:linear_PDE}. For every $p\in[1,+\infty)$ and $0 \leq \alpha \leq \alpha' < 2$, we have
    \begin{equation*}
        \|e^{-t\aA} -\pP\|_{\wW^{\alpha,p} \to \wW^{\alpha',p}} \lesssim (1+t^{-\frac{\alpha'-\alpha}{2}}) \, e^{-c^*t}
    \end{equation*}
    for all $t\geq 0$. 
\end{lem}
\begin{proof}
    This is a $\wW^{\alpha,p}$ version of Proposition~\ref{prop:Linfty_decay}. The claim follows from Proposition~\ref{prop:Linfty_decay}, the smoothing effect of $e^{t\Delta}$ and the boundedness of $f'$.
\end{proof}

We will prove iteratively that on the event $A_\eps^{(N)}$, the quantity $\dist_{\wW^{\alpha^*, p^*}}(u_\eps[t], \mM)$ is small for $t \in [0,\eps^{-N}]$. A key ingredient is the following deterministic lemma. 

\begin{lem} \label{lem:PerturbationVersion}
    Let $w_\eps$ be the solution to
    \begin{equation*}
        \partial_t w_\eps = \Delta w_\eps  + f(w_\eps)+R_\eps\;, 
    \end{equation*}
    with initial data $w_\eps [0]$ and $R_\eps$ satisfying
    \begin{equation*}
        \dist_{\wW^{\alpha^*, p^*}}(w_\eps[0], \mM) \leq \eps^{\gamma - \kappa'}\;, \qquad \sup_{t \in [0,\eps^{-\kappa_1}]} \|R_\eps[t]\|_{\lL^{p^*}} \leq \eps^{\gamma - 2\kappa_2}\;.
    \end{equation*}
    Then, there exist $\eps_0, T>0$ such that
    \begin{equs}
    \label{e:closeness_one_step_reg_1}
        &\sup \limits_{t \in [T, \eps^{-\kappa_1}]} \dist_{\wW^{\alpha^*, p^*}}(w_\eps[t], \mM) \leq \frac{1}{2} \eps^{\gamma - \kappa'}
    \end{equs}
    for all $\eps \in (0, \eps_0)$. Moreover, if $w_\eps[0] \in\mM$, then \eqref{e:closeness_one_step_reg_1} holds for all $t \in [0, \eps^{-\kappa_1}]$.
\end{lem}
\begin{proof}
    Recall the functional $\eta$ defined in Proposition~\ref{prop:fermi}. Let $D_\eps := w_\eps - m_{\eta(w_\eps[0])}$. By the assumption on $w_\eps[0]$, there exists $\theta \in \RR$ such that $\|w_\eps[0]-m_\theta\|_{\wW^{\alpha^*,p^*}}\leq 2\eps^{\gamma-\kappa'}$. Then by Proposition~\ref{prop:fermi} and the embedding $\wW^{\alpha^*,p^*}\hookrightarrow \lL^\infty$, we have
    \begin{equation*} 
        |\eta(w_\eps[0])-\theta|\lesssim\|w_\eps[0]-m_\theta\|_{\lL^\infty}\lesssim\eps^{\gamma-\kappa'}\;.
    \end{equation*}
    It then follows from triangle inequality and smoothness of $m$ that
    \begin{equation} \label{e:D_initial}
        \|D_\eps[0]\|_{\wW^{\alpha^*, p^*}} \leq \|w_\eps[0] - m_\theta\|_{\wW^{\alpha^*, p^*}} + \|m_\theta - m_{\eta(w_\eps[0])}\|_{\wW^{\alpha^*, p^*}} \lesssim \eps^{\gamma-\kappa'}\;.
    \end{equation}
    Now without loss of generality we assume $\eta(w_\eps[0]) = 0$. Then $D_\eps$ solves the equation
    \begin{equation*}
        \d_t D_\eps = \Delta D_\eps +f(w_\eps)- f(m) +R_\eps = -\aA D_\eps + Q_\eps + R_\eps\;,
    \end{equation*}
    where $\aA = -\Laplace - f'(m)$ is defined in \eqref{e:op_A} and 
    $Q_\eps := f(w_\eps)- f(m) - f'(m)D_\eps$. By mean value theorem, we have
    \begin{equation} \label{e:Q_bound}
        |Q_{\eps}(t,x)| \leq \Vert f''\Vert_{\lL^\infty} \|D_\eps[t]\|_{\lL^\infty} |D_{\eps}(t, x)|\;.
    \end{equation}
    By Duhamel's principle, we have
\begin{equation*}
    D_\eps[t]  =  e^{-t\aA} D_\eps[0] + \int_0^t e^{-(t-s)\aA} \big( Q_\eps[s] + R_\eps[s] \big) \,\md s\;.
\end{equation*}
Taking $\wW^{\alpha^*, p^*}$-norm on both sides above and applying Lemma~\ref{lem:SG_regularity} and the bound \eqref{e:D_initial}, we get
\begin{equation*}
    \Vert D_\eps[t]\Vert_{\wW^{\alpha^*, p^*}} \lesssim \,e^{-c^* t} \eps^{\gamma-\kappa'} +\int_0^t \big(1+(t-s)^{-\frac{\alpha^*}{2}} \big) \big( \Vert Q_\eps[s]\Vert_{\lL^{p^*}}+\Vert R_\eps[s]\Vert_{\lL^{p^*}} \big)\,\md s\;.
\end{equation*}
Here we used $\bracket{D_\eps[0], m'} = 0$ by the definition of $\eta$ to get the exponential factor $e^{-c^* t}$. Applying the bound \eqref{e:Q_bound} for $Q_{\eps}$, the Sobolev embedding, and the smallness assumption on $R_{\eps}$, we deduce there exists $C_0>0$ such that
\begin{equation} \label{e:D_Linfty_bound1}
    \begin{split}
    \|D_{\eps}[t]\|_{\wW^{\alpha^*, p^*}} \leq C_0 \bigg( &(1+t) \, \eps^{\gamma-2\kappa_2} + e^{-c^*t}\eps^{\gamma - \kappa'}\\
    &+ \int_0^{t} (1+(t-s)^{-\frac{\alpha^*}{2}}) \Vert D_\eps[s]\Vert_{\wW^{\alpha^*, p^*}}^2 \,\md s \bigg)\;.
    \end{split}
\end{equation}
Let
\begin{equation*}
    T^* := \inf \left\{t: \|D_{\eps}[t]\|_{\wW^{\alpha^*, p^*}} \geq 2C_0 \eps^{\gamma-\kappa'} \right\}\;.
\end{equation*}
Then we get
\begin{equation*}
    2C_0 \eps^{\gamma-\kappa'} \leq \|D_\eps[T^*] \|_{\wW^{\alpha^*, p^*}} \leq C_0 \Big(  (1+T^*) \eps^{\gamma - 2\kappa_2} + \eps^{\gamma - \kappa'} + (1+T^*) \big(2C_0 \eps^{\gamma - \kappa'} \big)^2 \Big)\;.
\end{equation*}
Hence, if $\eps $ is sufficiently small, we necessarily have $T^* \geq \eps^{-\kappa_1}$, which in turn implies $\|D_\eps[t]\|_{\wW^{\alpha^*, p^*}} \leq 2C_0 \eps^{\gamma - \kappa'}$ for all $t \in [0, \eps^{-\kappa_1}]$. Combining this with \eqref{e:D_Linfty_bound1}, we get 
\begin{equation}
\label{e:closeness_conclusion}
    \|D_{\eps}[t] \|_{\wW^{\alpha^*, p^*}} \leq 2C_0 \Big( e^{-c^* t} \eps^{\gamma - \kappa'} + \eps^{\gamma - \kappa_1 - 2\kappa_2} + \eps^{-\kappa_1} \big(2C_0 \eps^{\gamma - \kappa'} \big)^2 \Big)\;.
\end{equation}
Then, for sufficiently small $\eps_0$ and sufficiently large $T$, we have 
\begin{equ}
    \label{e:closeness_Linfty}
    \|w_\eps[t] - m\|_{\wW^{\alpha^*, p^*}} = \|D_{\eps}[t] \|_{\wW^{\alpha^*, p^*}} \leq \frac{1}{2} \eps^{\gamma - \kappa'}
\end{equ}
for all $\eps \in (0,\eps_0)$ and $t \in [T, \eps^{-\kappa_1}]$. Also note that if $w_0 \in \mM$, then $D_{\eps}[0] = 0$ and we wouldn't have the first term in \eqref{e:closeness_conclusion}. Therefore, in this case \eqref{e:closeness_Linfty} holds for all $t \in [0, \eps^{-\kappa_1}]$. 
\end{proof}

Now we prove the following proposition that (together with Lemma~\ref{lem:GoodEvent}) implies Theorem~\ref{thm:Linftycloseness}.

\begin{prop}\label{prop:iteration}
    Let $N>0$ and $A^{(N)}_{\eps}$ be the event defined in \eqref{e:good_event}. There exists $\eps_0 > 0$ such that for every $\eps\in(0,\eps_0)$ and every $\omega \in A^{(N)}_{\eps}$, the solution $u_\eps$ of \eqref{e:main_eqn} satisfies
    \begin{equation*}
        \sup \limits_{t \in [0, \eps^{-N}]} \dist_{\wW^{\alpha^*, p^*}} (u_\eps[t], \mM) \leq \eps^{\gamma - \kappa'}\;.
    \end{equation*}
\end{prop}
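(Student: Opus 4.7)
The plan is to run an inductive perturbation argument on the deterministic Lemma~\ref{lem:PerturbationVersion}, using the collection of linear solutions $\{X_\eps^{(k)}\}_{k\in\frac12\ZZ}$ to absorb the stochastic input. On the event $A_\eps$, all these linear solutions are simultaneously of size at most $\|f'\|_{\lL^\infty}^{-1}\eps^{\gamma'-\kappa_2}$ on their respective intervals of length $\eps^{-\kappa_1}$. We use both integer and half-integer offsets so that every time in $[0,\eps^{-N}]$ falls in the ``late'' portion of at least one such interval.

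Fix a half-integer $k\in\frac12\ZZ\cap[0,\eps^{-N+\kappa_1}]$ and work on $I_k:=[k\eps^{-\kappa_1},(k+1)\eps^{-\kappa_1}]$. Write $\widetilde v_\eps=w_\eps^{(k)}+X_\eps^{(k)}$ on $I_k$ with $w_\eps^{(k)}[k\eps^{-\kappa_1}]=\widetilde v_\eps[k\eps^{-\kappa_1}]$. Then
\begin{equation*}
    \partial_t w_\eps^{(k)} = \Delta w_\eps^{(k)}+f(w_\eps^{(k)})+R_\eps^{(k)},\qquad R_\eps^{(k)}:=f(w_\eps^{(k)}+X_\eps^{(k)})-f(w_\eps^{(k)}).
\end{equation*}
By the mean value theorem and the definition of $A_\eps$, $\|R_\eps^{(k)}[t]\|_{\lL^\infty}\leq \|f'\|_{\lL^\infty}\cdot\|X_\eps^{(k)}[t]\|_{\lL^\infty}\leq \eps^{\gamma'-\kappa_2}$ uniformly on $I_k$, so $R_\eps^{(k)}$ satisfies the hypothesis of Lemma~\ref{lem:PerturbationVersion} with the parameters $\gamma',\kappa,\kappa_2$ fixed at the start of the section.

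I would prove by induction on $k\in\frac12\ZZ\cap[0,\eps^{-N+\kappa_1}]$ the statement
\begin{equation*}
    (H_k)\qquad \sup_{t\in[0,\,(k+1)\eps^{-\kappa_1}]}\dist(\widetilde v_\eps[t],\mM)\leq \eps^{\gamma'-\kappa}.
\end{equation*}
The base case $k=0$ uses that $\widetilde v_\eps[0]\in\mM$ and applies the $w[0]\in\mM$ part \eqref{e:closeness_one_step_2} of Lemma~\ref{lem:PerturbationVersion} to $w_\eps^{(0)}$ on $I_0$; then
\begin{equation*}
    \dist(\widetilde v_\eps[t],\mM)\leq \dist(w_\eps^{(0)}[t],\mM)+\|X_\eps^{(0)}[t]\|_{\lL^\infty}\leq \tfrac12\eps^{\gamma'-\kappa}+\eps^{\gamma'-\kappa_2}\leq\eps^{\gamma'-\kappa}
\end{equation*}
for $\eps$ small enough, since $\kappa_2<\kappa$. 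For the inductive step, assume $(H_{k-1/2})$ holds. Apply Lemma~\ref{lem:PerturbationVersion} to $w_\eps^{(k)}$ on $I_k$, whose initial data satisfies $\dist(w_\eps^{(k)}[k\eps^{-\kappa_1}],\mM)=\dist(\widetilde v_\eps[k\eps^{-\kappa_1}],\mM)\leq \eps^{\gamma'-\kappa}$ by $(H_{k-1/2})$. This yields
\begin{equation*}
    \sup_{t\in[k\eps^{-\kappa_1}+T,\,(k+1)\eps^{-\kappa_1}]}\dist(w_\eps^{(k)}[t],\mM)\leq \tfrac12\eps^{\gamma'-\kappa},
\end{equation*}
so the same bound transfers to $\widetilde v_\eps$ on this sub-interval after adding $X_\eps^{(k)}$ as above. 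The remaining ``transient'' window $[k\eps^{-\kappa_1},k\eps^{-\kappa_1}+T]$ is already covered by the previous interval $I_{k-1/2}$: since $\eps^{-\kappa_1}/2\geq T$ for $\eps$ small, it lies in the good region $[(k-1/2)\eps^{-\kappa_1}+T,\,(k+1/2)\eps^{-\kappa_1}]$ of the half-integer decomposition, which is controlled by $(H_{k-1/2})$. This closes the induction and yields $(H_k)$ for all relevant $k$, proving the proposition.

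The key technical point — and the only real obstacle — is the unavoidable transient window of length $T$ at the start of each interval, during which Lemma~\ref{lem:PerturbationVersion} gives no information; the half-integer offset in the definition \eqref{e:good_event} of $A_\eps$ is precisely what resolves this. Everything else is an entirely deterministic comparison argument. Note also that we never need any smoothness of $\widetilde v_\eps$ beyond $\lL^\infty$, since the roughness of the noise is isolated in the explicit linear solutions $X_\eps^{(k)}$, avoiding the mollification step of \cite{Fun95}.
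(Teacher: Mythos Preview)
Your proposal is correct and follows essentially the same approach as the paper: the same subtraction $\widetilde v_\eps = w_\eps^{(k)} + X_\eps^{(k)}$ on each interval $I_k$, the same use of Lemma~\ref{lem:PerturbationVersion} with $R_\eps^{(k)} = f(\widetilde v_\eps) - f(w_\eps^{(k)})$, and the same half-integer overlap to cover the transient window of length $T$. The only cosmetic difference is that you package the iteration as a formal induction on $(H_k)$ and handle the initial interval via \eqref{e:closeness_one_step_2} in the base case, whereas the paper first iterates \eqref{e:closeness_one_step_1} to cover $[\tfrac12\eps^{-\kappa_1},\eps^{-N}]$ and then invokes \eqref{e:closeness_one_step_2} at the end for $[0,\tfrac12\eps^{-\kappa_1}]$.
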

\begin{proof}
We will use Lemma~\ref{lem:PerturbationVersion} repeatedly for translations of the time intervals. Define $w_\eps^{(0)}[t] := u_\eps[t] - X_\eps^{(0)}[t]$ for $t\in [0,\eps^{-\kappa_1}]$. It satisfies the equation
\begin{equation*}
    \partial_t w_\eps^{(0)} = \Delta w_\eps^{(0)}  + f(w_\eps^{(0)})+R_\eps\;, \quad w_\eps^{(0)}[0] \in \mM\;,
\end{equation*}
where $R_\eps := f(u_\eps) - f(w_\eps^{(0)})$ satisfies
\begin{equation*}
    |R_{\eps}(t,x)| \leq \Vert f' \Vert_{\lL^{\infty}}|X_{\eps}^{(0)}(t,x)|\;.
\end{equation*}
In particular, $R_\eps$ satisfies the assumption in Lemma~\ref{lem:PerturbationVersion}.
By Lemma~\ref{lem:PerturbationVersion} and the definition \eqref{e:good_event} of $A^{(N)}_{\eps}$, for $\eps$ small enough we have
\begin{equs}
    \,&\sup_{t \in [0, \eps^{-\kappa_1}]} \dist_{\wW^{\alpha^*, p^*}} (u_{\eps}[t], \mM) \leq \frac{1}{2} \eps^{\gamma - \kappa'} + \eps^{\gamma - \kappa_2}\leq \eps^{\gamma-\kappa'}\;.
\end{equs}
Now, define $w_\eps^{(1/2)}[t] := u_\eps[t] - X_\eps^{(1/2)}[t]$ for $t \in [\frac{1}{2} \eps^{-\kappa_1}, \frac{3}{2} \eps^{-\kappa_1}]$. 
Applying the same argument to $w_{\eps}^{(1/2)}$, we would get
\begin{equation*}
    \sup_{t \in [\eps^{-\kappa_1}, \frac{3}{2}\eps^{-\kappa_1}]} \dist_{\wW^{\alpha^*, p^*}}(u_\eps[t], \mM)  \leq \eps^{\gamma-\kappa'}\;.
\end{equation*}
The result follows by iterating this procedure.
\end{proof}

\begin{rmk} \label{rmk:large_noise}
    If $\gamma<0$, then the solution $u_\eps$ becomes very far from $\mM$ in $\lL^\infty$-norm at any $\oO(1)$ time (actually even $o_\eps(1)$ time depending on the value of $\gamma$). To see this, let us suppose there exist $T>0$ and $M>0$ independent of $\eps$ such that
    \begin{equation*}
        \sup_{t \in [0,T]} \|u_\eps[t]\|_{\lL^\infty} \leq M\;.
    \end{equation*}
    Then for the same decomposition $u_\eps = X_\eps + w_\eps$ with $X_\eps[0] = 0$ and $w_\eps[0] \in \mM$ as above, we see that $\|X_\eps[T]\|_{\lL^\infty}$ is at least $\eps^{\gamma}$. Note that the remainder $w_\eps$ satisfies
    \begin{equation*}
        \d_t w_\eps = \Delta w_\eps + f(u_\eps)\;, \qquad w_\eps[0] \in \mM\;.
    \end{equation*}
    Since $\|u_\eps\|_{\lL^\infty} \leq M$ on $[0,T]$ and that $w_\eps[0] \in \mM$, we see that $w_\eps[T]$ is $\oO(1)$ distance from $\mM$, and in particular its $\lL^\infty$-norm is bounded. But this contradicts with the assumption that $u_\eps$ has bounded $\lL^\infty$-norm on $[0,T]$ since $X_\eps$ explodes. 
\end{rmk}

\appendix

\section{Fr\'echet derivatives}\label{sec:Frechet_derivatives}

In this Section, we give a few properties of Fr\'echet derivatives that are frequently used in the main text. These are all well-known and we omit their proofs. Throughout, $\xX$ and $\yY$ are two Banach spaces, and $\sS$ is an open set of $\xX$. Recall that $\emL(\xX, \yY)$ is the class of all bounded linear operators from $\xX$ to $\yY$.

We say a map $\Phi: \sS \rightarrow \yY$ is Fr\'echet differentiable if for every $v \in \sS$, there exists $\Psi (v) \in \emL (\xX, \yY)$ such that
\begin{equation*}
    \Vert \Phi(v+h) - \Phi(v) - \Psi(v)h\Vert_{\yY} = o(\Vert h\Vert_{\xX})
\end{equation*}
as $\Vert h\Vert_{\xX}\to 0$. We call the map $\Psi: \sS \rightarrow \emL (\xX, \yY)$ the Fr\'echet derivative of $\Phi$, denoted by $D \Phi$.

\begin{lem}[Uniform convergence] \label{lem:UCProperty}
    Let $\Phi_n: \sS \rightarrow \yY$ be Fr\'echet differentiable. Suppose there exist $\Phi: \sS \rightarrow \yY$ and $\Psi: \sS \rightarrow \emL(\xX, \yY)$ such that $\Phi_n(v) \rightarrow \Phi(v)$ for every $v \in \sS$, and
    \begin{equation*}
        \sup_{v \in \sS} \|D \Phi_n (v) - \Psi(v)\|_{\emL(\xX,\yY)} \rightarrow 0
    \end{equation*}
    as $n \rightarrow +\infty$. Then, $\Phi$ is Fr\'echet differentiable and $D\Phi = \Psi$.
\end{lem}

\begin{lem}\label{lem:commutativity}
    Let $(\Phi_t)_{t \in\RR}$ be a family of maps from $\sS$ to $\yY$ which are Fr\'echet differentiable. Suppose there exists a dominating function $M(t) \geq 0$ such that $\int_\RR M(t)\,\md t <\infty$ and 
    \begin{equation*}
        \sup_{v\in \sS} \Vert D\Phi_t(v)\Vert_{\emL(\xX,\yY)} \leq M(t)\;.
    \end{equation*}
    If moreover $\Psi(v):=\int_\RR \Phi_t(v)\,\md t$ exists for all $v\in \sS$, then $\Psi$ is Fr\'echet differentiable and
    \begin{equation*}
        D\Psi(v) = \int_\RR D\Phi_t(v) \,\md t\;.
    \end{equation*}
\end{lem}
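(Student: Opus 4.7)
The plan is to apply the dominated convergence theorem in tandem with the mean value inequality (Lemma~\ref{lem:meanValueProperty}). The first step is to identify the candidate derivative: define the operator $L(v) \in \emL(\xX,\yY)$ by
\begin{equation*}
    L(v) h := \int_\RR D\Phi_t(v) h \,\md t\;, \qquad h \in \xX\;.
\end{equation*}
This Lebesgue integral converges in $\yY$ because $\|D\Phi_t(v) h\|_{\yY} \leq M(t)\|h\|_{\xX}$, and $L(v)$ is plainly linear in $h$ with operator norm bounded by $\int_\RR M(t)\,\md t$.

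Next I would verify the defining property of the Fr\'echet derivative. Interchanging difference and integral gives
\begin{equation*}
    \Psi(v+h) - \Psi(v) - L(v) h = \int_\RR r_t(h)\,\md t\;, \qquad r_t(h) := \Phi_t(v+h) - \Phi_t(v) - D\Phi_t(v) h\;.
\end{equation*}
Applying the mean value inequality to the map $s\mapsto \Phi_t(v+sh) - sD\Phi_t(v)h$ from $[0,1]$ to $\yY$, whose derivative in $s$ equals $\big(D\Phi_t(v+sh) - D\Phi_t(v)\big)h$, yields
\begin{equation*}
    \|r_t(h)\|_{\yY} \leq \sup_{s\in[0,1]} \|D\Phi_t(v+sh) - D\Phi_t(v)\|_{\emL(\xX,\yY)} \cdot \|h\|_{\xX} \leq 2 M(t)\|h\|_{\xX}\;.
\end{equation*}
At the same time, for each fixed $t$, Fr\'echet differentiability of $\Phi_t$ at $v$ directly gives $\|r_t(h)\|_{\yY} = o(\|h\|_{\xX})$ as $\|h\|_{\xX}\to 0$.

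Therefore the real-valued functions $t \mapsto \|h\|_{\xX}^{-1} \|r_t(h)\|_{\yY}$ are pointwise $o(1)$ in $h$ and dominated by the integrable function $2 M(t)$. Dominated convergence then yields $\|h\|_{\xX}^{-1}\int_\RR \|r_t(h)\|_{\yY}\,\md t \to 0$, which combined with the triangle inequality for vector-valued integrals implies $\|\Psi(v+h) - \Psi(v) - L(v)h\|_{\yY} = o(\|h\|_{\xX})$, proving Fr\'echet differentiability with $D\Psi(v) = L(v)$. The only delicate point I anticipate is ensuring measurability of $t\mapsto D\Phi_t(v)h$ in $\yY$ (needed to define $L(v)h$ as an integral) and of the scalar map $t\mapsto \|r_t(h)\|_{\yY}$; both follow from the assumed integrability of $t\mapsto \Phi_t(v)$ (which forces sufficient measurability of $t\mapsto \Phi_t$) together with the pointwise realization of $D\Phi_t(v)h$ as a limit of the measurable difference quotients $s^{-1}\big(\Phi_t(v+sh) - \Phi_t(v)\big)$ along a rational sequence $s\to 0$.
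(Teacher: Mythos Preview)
Your argument is correct and is the standard route: the paper actually omits the proof of this lemma, noting only that ``the proofs are essentially the same as finite-dimensional situations and follow from the definition.'' Your use of the mean value inequality (Lemma~\ref{lem:meanValueProperty}) to get the integrable majorant $2M(t)$ and dominated convergence along sequences $h_n\to 0$ is exactly the expected argument, and your remark on measurability via difference quotients is a sensible way to make the Bochner integrals well defined.
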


\begin{lem}[Chain rule]\label{lem:chainRule_n}
    Let $\xX$, $\yY$ and $\zZ$ be Banach spaces. Let $\sS$ be an open subset of $\xX$ and $\tT$ be an open subset of $\yY$. Let 
    \begin{equation*}
        \Phi: \sS \rightarrow \tT\;, \qquad \Psi: \tT \rightarrow \zZ
    \end{equation*}
    be maps that are $n$-times Fr\'echet differentiable. Then the map $\Psi\circ \Phi: \sS \rightarrow \zZ$ is also $n$-times Fr\'echet differentiable, and
    \begin{equation*}
        \bracket{D^n(\Psi\circ \Phi)(v), \vec{\varphi}_{[n]}  } = \sum_{P\in\emP([n])} \Big \langle(D^{|P|}\Psi)(\Phi(v)), \Big(\bracket{D^{|B|}\Phi(v), \vec{\varphi}_B} \Big)_{B \in P} \Big \rangle\;.
    \end{equation*}
\end{lem}

\begin{lem} \label{lem:restriction_derivative}
    Let $\xX_0$ be a subspace of $\xX$ equipped with norm $\|\cdot\|_{\xX_0}$ such that there exists a constant $C>0$ satisfying
    \begin{equation*}
        \|v\|_{\xX} \leq C \|v\|_{\xX_0}
    \end{equation*}
    for all $v \in \xX_0$. Let $\sS_0$ be an open set in $\xX_0$ that is also contained in the set $\sS$. Let $\Phi: \sS \rightarrow \yY$ be $n$-times Fr\'echet differentiable on $\sS$ with Fr\'echet derivatives $D_{\xX}^n \Phi: \sS \rightarrow \emL^n (\xX^n, \yY)$. 
    
    Then, the restriction of $\Phi$ on $\sS_0$ is $n$-times Fr\'echet differentiable with respect to the structure induced by $\xX_0$ with Fr\'echet derivatives $D_{\xX_0}^n \Phi$. Furthermore, for every $v_0 \in \sS_0$, $D_{\xX_0}^n \Phi(v_0) = D_{\xX}^n \Phi (v_0)$ on $\xX_0^n$. 
\end{lem}

\section{Fr\'echet derivatives of the functional $\eta$}
\label{sec:fermi}

This section is devoted to the properties of the functional $\eta$ introduced in Proposition~\ref{prop:fermi}. We first give a proof to that proposition concerning the existence and basic properties of $\eta$.

\begin{proof} [Proof of Proposition~\ref{prop:fermi}]
    Fix $\beta>0$ sufficiently small (with value to be determined later). Let $v \in \vV_\beta$. We assume without loss of generality that $\|v-m\|_{\lL^\infty}< \beta$. For this $v$, let
    \begin{equation*}
        \phi(\eta) := \bracket{v, m_\eta'} = \bracket{m, m'_\eta} + \bracket{v-m, m'_\eta} =: \phi_1(\eta) + \phi_2(\eta)\;.
    \end{equation*}
    We first show that $\phi$ has a zero point $\eta^*$ if $\beta$ is sufficiently small. To see this, we note that $m(\pm \infty) = \pm 1$ and that $m'_\eta$ integrates to $2$ on the real line. This implies $\phi_1 (\pm \infty) = \pm 2$. Also, 
    \begin{equation*}
        |\phi_2(\eta)| \leq \|v-m\|_{\lL^\infty} \|m_\eta'\|_{\lL^1} \leq 2 \beta\;,
    \end{equation*}
    where we used $\|m_\eta'\|_{\lL^1} = 2$. Hence, if $\beta \leq \frac{1}{2}$, we have
    \begin{equation*}
        \phi(-\infty) < 0 < \phi(+\infty)\;.
    \end{equation*}
    Since $\phi$ is continuous in $\eta$, there exists $\eta^* \in \RR$ such that $\phi(\eta^*) = 0$. 

    We now turn to uniqueness. First note that
    \begin{equation*}
        \phi_1'(\eta) = - \bracket{m,m''_\eta} = \bracket{m', m'_\eta} \geq 0
    \end{equation*}
    for all $\eta \in \RR$. Furthermore, there exists $c>0$ such that $\phi_1'(\eta) \geq c$ for all $|\eta| \leq 1$. Let $\eta_0 \in \RR$ be such that $\phi(\eta_0) = 0$. Then we have
    \begin{equation*}
        |\phi_1(\eta_0) - \phi_1(0)| = |\phi_1(\eta_0)| = |\phi_2(\eta_0)| \leq 2\beta \;.
    \end{equation*}
    The first equality follows from the oddness of $m$ (so that $\phi_1(0) = \bracket{m,m'} = 0$), and the second equality uses that $\eta_0$ is a zero point of $\phi$. On the other hand, if $|\eta_0| > 1$, we have
    \begin{equation*}
        |\phi_1 (\eta_0) - \phi_1(0)| = \Big| \int_{0}^{\eta_0} \phi_1'(\eta) \,\md \eta \Big| \geq c\;,
    \end{equation*}
    which is a contradiction if $\beta < \frac{c}{2}$. Hence, for $\beta < \frac{c}{2}$, any zero point $\eta_0$ of $\phi$ must be in the interval $[-1,1]$. Now we prove the uniqueness of zero point in $[-1,1]$. Note that we have 
    \begin{equation*}
        |\phi_2'(\eta)| \leq \|v-m\|_{\lL^\infty} \|m''\|_{\lL^1} \leq \beta \|m''\|_{\lL^1}\;.
    \end{equation*}
    Hence, if $\beta < \frac{c}{2 \|m''\|_{\lL^1}}$, we have $\phi'(\eta) \geq \frac{c}{2}$ for all $\eta \in [-1,1]$. This proves the uniqueness of zero of $\phi$ in $[-1,1]$ for $\beta<\frac{c}{2\|m''\|_{\lL^1}}$, and hence in $\RR$ for $\beta< \frac{c}{2}\big(1\wedge\frac{1}{\|m''\|_{\lL^1}}\big)$.

    For the last bound, we note that since the zero point $\eta^*$ satisfies $|\eta^*| \leq 1$ and that $\phi_1' \geq c$ on $[-1,1]$, we have
    \begin{equation*}
        c \, |\eta^*| \leq \big| \phi_1(\eta^*) - \phi_1(0) \big| \leq |\phi_2(\eta^*)| \leq 2 \|v-m\|_{\lL^\infty}\;.
    \end{equation*}
    This implies
    \begin{equation*}
        |\eta^*| \leq \frac{2}{c} \cdot \|v-m\|_{\lL^\infty}\;,
    \end{equation*}
    and completes the proof of the lemma. 
\end{proof}

We now prove the infinite differentiability of $\eta$.

\begin{lem}\label{lem:derivativesOfEta}
    The functional $\eta$ satisfies the following properties:
    \begin{enumerate}
        \item It is infinitely Fr\'echet differentiable in $\vV_\beta$, and $D^n \eta (v)$ has a kernel for every $n \in \NN$ and $v \in \vV_\beta$ (still denoted by $D^n \eta(v)$). Furthermore, there exists $\lambda>0$ such that for every $n \in \NN$ and $K>0$, the kernel satisfies
        \begin{equation*}
            \sup_{v \in \vV_{\beta,K}} \|D^n \eta(v)\|_{\lL_\lambda^1(\RR^n)} < +\infty\;.
        \end{equation*}
        
        
        \item We have the identities
        \begin{equation*}
            \begin{split}
            \bracket{D\eta(m_\theta),m'_\theta} &= -1\;,\\
            \sum_{P\in\emP([n])} \Big\langle D^{|P|}\eta(m_{\theta}), \, \big(m^{(|B|)}_{\theta} \big)_{B\in P}\Big\rangle &= 0\;, \quad n \geq 2\;.
            \end{split}
        \end{equation*}
    \end{enumerate}
\end{lem}
\begin{proof}
We first prove that $\eta$ is infinitely Fr\'echet differentiable in $v\in\vV_\beta$, and that its derivatives are of the form
\begin{equation} \label{e:derivativesOfEta}
    D^n\eta(v) = \sum_{\vec{k}, \vec{\ell}} C_{\vec{k}, \vec{\ell}}\;\frac{\prod\limits_{j\geq 3} \bracket{m^{(j)}_{\eta(v)},v}^{\ell_j}}{\bracket{m''_{\eta(v)},v}^{\ell_2}}\bigotimes\limits_{i=1}^n m^{(k_i)}_{\eta(v)}\;. 
\end{equation}
Here, $\bigotimes_i m_{\eta(v)}^{(k_i)}$ is the tensor product of $m_{\eta(v)}^{(k_i)} \in \emL (\lL^\infty, \RR)$ in the sense that
\begin{equation*}
    \Big\langle \bigotimes\limits_{i=1}^n m^{(k_i)}_{\eta(v)}\,, \; \vec{\varphi}_{[n]}  \Big\rangle = \prod_{i=1}^{n} \langle m_{\eta(v)}^{(k_i)}, \, \varphi_i \rangle
\end{equation*}
for $\varphi_1, \dots, \varphi_n \in \lL^\infty$, and $C_{\vec{k}, \vec{\ell}}$ are constants depending on $\vec{k} = (k_1, \dots, k_n)$ and $\vec{\ell} = (\ell_2, \ell_3, \dots)$ such that only finitely many $C_{\vec{k}, \vec{\ell}}$ are non-zero. 

We prove \eqref{e:derivativesOfEta} by induction on $n$. The case for $n=1$ was proven in \cite[Lemma~9.5]{Fun95} in an $\lL^2$ neighborhood of $\mM$, but the same proof works in the $\lL^\infty$ setting. We repeat it here for completeness. For $v \in \vV_\beta$ and $h \in \lL^\infty$ with $\|h\|_{\lL^\infty}$ sufficiently small, we have
\begin{equation*}
    m_{\eta(v+h)}' (y) - m_{\eta(v)}'(y) = \big( \eta(v) - \eta(v+h) \big) \int_{0}^{1} m_{(1-s) \eta(v)+s \eta(v+h)}'' (y) \, \md s\;.
\end{equation*}
Multiplying both sides by $v$, integrating out $x \in \RR$ and using \eqref{e:fermi_defn}, we get
\begin{equation*}
    \eta(v+h) - \eta(v) = \frac{\langle m_{\eta(v+h)}', h \rangle}{\big\langle \int_{0}^{1} m_{(1-s) \eta(v)+s \eta(v+h)}'' \,\md s, v \big\rangle}\;.
\end{equation*}
It is then straightforward to check that
\begin{equation*}
    \frac{1}{\|h\|_{\lL^\infty}} \left\| \eta(v+h) - \eta(v) - \frac{\langle m_{\eta(v)}', h \rangle}{\langle m_{\eta(v)}'', v \rangle}  \right\|_{\lL^\infty} \rightarrow 0
\end{equation*}
as $\|h\|_{\lL^\infty} \rightarrow 0$. This shows $\eta$ is differentiable in $\vV_\beta$ with
\begin{equation} \label{e:Deta_expression}
    D\eta(v) = \frac{1}{\langle m_{\eta(v)}'', v \rangle} \cdot m'_{\eta(v)}\;,
\end{equation}
which is of the form \eqref{e:derivativesOfEta} with $n=1$. Now, suppose $D^n \eta$ exists and is of the form \eqref{e:derivativesOfEta}, then differentiating another time with respect to $v$, and applying chain rule and \eqref{e:Deta_expression}, we see that $D^{n+1} \eta$ is again of the form \eqref{e:derivativesOfEta} with $n$ replaced by $n+1$. 

Hence $\eta(v)$ is infinitely Fr\'echet differentiable and \eqref{e:derivativesOfEta} holds for all $n\in\NN$. The bound for $\|D^n \eta(v)\|_{\lL_\lambda^1}$ then follows from the expression \eqref{e:derivativesOfEta} and the exponential decay of derivatives of $m$ in Lemma~\ref{lem:statationary_exponential_decay}. 


For Assertion 2, the definition of $\eta$ directly gives the identity
    \begin{equation*}
        \eta(m_\theta)=\theta\;.
    \end{equation*}
The assertion then follows from differentiating the above identity (in $\theta$) $n$ times and applying the chain rule Lemma~\ref{lem:chainRule_n}. 
\end{proof}

\section{Properties of $\rho_\eps$}\label{sec:rho}

Recall from Section~\ref{sec:corrector_construction} that $L_\eps = \frac{2}{\sqrt{\eps}}$, and the definition of $\rho_\eps$ from \eqref{e:rho} with a sufficiently large cutoff $N_\eps \gg e^{\frac{2}{\sqrt{\eps}}}$. For every $N\in\NN$, define $\theta_N: \TT \rightarrow \RR$ by
\begin{equ}
        \theta_N(z):=\sum_{k\in\ZZ}\chi^2(k/N)e_k(z) = \sum_{k\in\ZZ}\chi^2(k/N) e^{2\pi i kz}\;,
    \end{equ}
where $e_k (y)= e^{2\pi i k y}$. Then we have
\begin{equation*}
    \rho_\eps (y_1, y_2) = \frac{1}{L_\eps} \theta_{N_\eps} \Big( \frac{y_1 - y_2}{L_\eps} \Big)\;.
\end{equation*}

\begin{lem}\label{lem:theta_boundedness}
We have the bound
    \begin{equ}
        \sup_{N\in\NN} \int_{-\frac{1}{2}}^{\frac{1}{2}} |\theta_N(y)|\,\md y<+\infty\;.
    \end{equ}
\end{lem}
\begin{proof}
Noting that we have the identity
    \begin{align*}
        (e^{2\pi i y} -1)^2\theta_N(y) =\sum_{k\in\ZZ}\big(\chi^2(k/N) -2\chi^2((k-1)/N)+\chi^2((k-2)/N)\big)e^{2\pi i ky}\;.
    \end{align*}
    It then follows from the assumption $\chi \in \cC_c^\infty$ that
    \begin{equ}
        |\theta_N(y)| \lesssim \frac{1}{N|e^{2\pi i y} -1|^2}\lesssim\frac{1}{Ny^2}\;.
    \end{equ}
    Combining this with the bound $|\theta_N(y)|\lesssim N$, we can get the desired bound.
\end{proof}

\begin{lem}\label{lem:rho_boundedness}
    For every $\lambda >0$, we have
    \begin{equ}
    \int_{\RR^2} |\varphi(y_1,y_2)\cdot\rho_\eps(y_1,y_2)|\,\md y_1\,\md y_2\lesssim \|\varphi\|_{\lL^\infty_{\lambda}(\RR^2)}
\end{equ}
uniformly over all $\eps\in(0,1)$ and $\varphi\in\lL^\infty_{\lambda}(\RR^2)$.
\end{lem}
\begin{proof}
    Performing a change of variables $(y_1,y_2) \mapsto (y+\frac{L_\eps}{2}z, y -\frac{L_\eps}{2}z)$, we have
    \begin{equation}\label{e:rho_changeVariable}
    \int_{\RR^2} |\varphi (y_1, y_2) \cdot \rho_\eps(y_1, y_2)| \, \md y_1 \,\md y_2 = \int_{\RR^2} |\varphi \big( y + \frac{L_\eps z}{2}\,, y - \frac{L_\eps z}{2} \big) | \cdot |\theta_{N_\eps}(z)| \, \md z \,\md y\;.
    \end{equation}
    By Lemma~\ref{lem:theta_boundedness}, we deduce
    \begin{equation}\label{e:rho_bound_strip}
    \begin{aligned}
    \int_{\RR} \int_{k-\frac{1}{2}}^{k+\frac{1}{2}} |\varphi \big( y + \frac{L_\eps z}{2}\,, y - \frac{L_\eps z}{2} \big) | \cdot |\theta_{N_\eps}(z)| \, \md z \,\md y \lesssim e^{-\lambda (|k|-\frac{1}{2})^+ L_\eps}\|\varphi\|_{\lL^\infty_{\lambda}(\RR^2)}\;.
    \end{aligned}
    \end{equation}
    Then the conclusion follows from summing over $k\in\ZZ$.
\end{proof}

\begin{lem}\label{lem:theta_convergence}
    For every $\alpha\in(0,1)$, there exists $\nu>0$ such that
    \begin{equation*}
        \big| \langle \theta_N -\delta, \, \phi \rangle \big| \lesssim N^{-\nu} \|\phi\|_{\cC^\alpha(\TT)}\;.
    \end{equation*}
    Here, $\delta$ denotes the delta function on $\TT$, and the proportionality constant is independent of $N \in \NN$ and $\phi \in \cC^\alpha(\TT)$. 
\end{lem}
\begin{proof}
By Lemma~\ref{lem:theta_boundedness}, we have the uniform bound 
\begin{equation*}
    \big| \langle \theta_N -\delta, \, \phi \rangle \big| \lesssim \|\phi\|_{\cC^{\frac{\alpha}{2}}(\TT)}\;.
\end{equation*}
For smallness in $N$, we use Fourier characterization of $\hH^{-1}$-norm to control the object by
\begin{equation*}
    \big| \langle \theta_N -\delta, \, \phi \rangle \big| \leq \|\theta_N - \delta\|_{\hH^{-1}} \|\phi\|_{\hH^1} \lesssim N^{-\nu'} \|\phi\|_{\hH^1}
\end{equation*}
for some $\nu'>0$. The claim follows from interpolating the above two bounds. 
\end{proof}

\begin{lem}\label{lem:rho_convergence}
    For every $\alpha\in(0,1)$ and $\lambda > 0$, we have
    \begin{equ}
    \Big|\int_{\RR^2} \varphi(y_1,y_2)\cdot\rho_\eps(y_1,y_2)\,\md y_1\,\md y_2 - \int_\RR \varphi(y,y)\,\md y\Big|\lesssim \eps \big(\|\varphi\|_{\lL^\infty_{\lambda}(\RR^2)}+\|\varphi\|_{\cC^\alpha(\RR^2)}\big)
\end{equ}
uniformly over all $\eps\in(0,1)$ and $\varphi\in\lL^\infty_{\lambda}(\RR^2)\cap\cC^\alpha(\RR^2)$.
\end{lem}
\begin{proof}
According to the change of variable \eqref{e:rho_changeVariable} and the bound \eqref{e:rho_bound_strip} for $|k| \geq 1$, it suffices to show that
\begin{equation} \label{e:key_term_delta_cont}
     \bigg|\int_\RR \int_{-\frac{1}{2}}^{\frac{1}{2}}\varphi(y+\frac{L_\eps}{2} z,y-\frac{L_\eps}{2} z)\cdot\theta_{N_\eps}(z)\,\md z\,\md y - \int_\RR \varphi(y,y)\,\md y\bigg|\lesssim \eps\big(\|\varphi\|_{\lL^\infty_{\lambda}(\RR^2)}+\|\varphi\|_{\cC^\alpha(\RR^2)}\big)\;.   
\end{equation}
Let
\begin{equation*}
    \widetilde{\varphi}(y) := \varphi \big( y + \frac{L_\eps}{4}, y - \frac{L_\eps}{4} \big) - \varphi \big( y - \frac{L_\eps}{4}, y + \frac{L_\eps}{4} \big)\;,
\end{equation*}
and set
\begin{equation*}
    \psi_y(z) := \varphi \big( y + \frac{L_\eps z}{2}, y - \frac{L_\eps z}{2} \big) - \widetilde{\varphi}(y) \cdot z\;.
\end{equation*}
Then $\psi_y$ is H\"older continuous on $\TT$, and we have the bounds
\begin{align*}
    |\widetilde{\varphi}(y)| \lesssim &e^{-\lambda' (|y|+L_\eps)}\|\varphi\|_{\lL^\infty_{\lambda}(\RR^2)}\;, \quad \sup_{z \in \TT} |\psi_y(z)| \lesssim e^{-\lambda |y|}\|\varphi\|_{\lL^\infty_{\lambda}(\RR^2)}\;,\\
    &\|\psi_y\|_{\cC^{\alpha}(\TT)} \lesssim L_\eps^{\alpha}\big(\|\varphi\|_{\lL^\infty_{\lambda}(\RR^2)}+\|\varphi\|_{\cC^\alpha(\RR^2)}\big)
\end{align*}
for all $y \in \RR$. Interpolating the $\lL^\infty$ and $\cC^{\alpha}$ bounds of $\psi_y$ gives
\begin{equation*}
    \|\psi_y\|_{\cC^{\frac{\alpha}{2}}(\TT)} \lesssim L_\eps^{\frac{\alpha}{2}} \cdot e^{-\lambda''|y|} \big(\|\varphi\|_{\lL^\infty_{\lambda}(\RR^2)}+\|\varphi\|_{\cC^\alpha(\RR^2)}\big)
\end{equation*}
for some $\lambda''>0$. Hence, by Lemmas~\ref{lem:theta_convergence} and~\ref{lem:theta_boundedness}, we have
\begin{equation*}
    \begin{split}
    &\phantom{111}\Big| \int_{-\frac{1}{2}}^{\frac{1}{2}} \varphi \big( y + \frac{L_\eps z}{2}, y - \frac{L_\eps z}{2} \big) \cdot\theta_{N_\eps}(z) \, \md z - \varphi(y,y) \Big|\\
    &\leq \big| \langle  \theta_{N_\eps} - \delta, \psi_y \rangle \big| + |\widetilde{\varphi}(y)| \cdot \Big| \int_{-\frac{1}{2}}^{\frac{1}{2}} z \theta_{N_\eps}(z) \, \md z \Big|\\
    &\lesssim \left(N_\eps^{-\nu} L_\eps^{\frac{\alpha}{2}} \, e^{-\lambda'' |y|} + e^{-\lambda' (|y| + L_\eps)} \right)\big(\|\varphi\|_{\lL^\infty_{\lambda}(\RR^2)}+\|\varphi\|_{\cC^\alpha(\RR^2)}\big)
    \end{split}
\end{equation*}
for some $\nu>0$. Note that $N_\eps \gg e^{L_\eps}$. The bound \eqref{e:key_term_delta_cont} then follows by integrating $y \in \RR$. This finishes the proof of the lemma. 
\end{proof}

\endappendix

\bibliographystyle{Martin}
\bibliography{Refs}

\end{document}